\theoremstyle{thmstyleone}%
\newtheorem{theorem}{Theorem}[section]
\newtheorem{proposition}[theorem]{Proposition}%
\newtheorem{lemma}[theorem]{Lemma}%
\theoremstyle{thmstyletwo}%
\newtheorem{example}{Example}%
\newtheorem{remark}{Remark}%
\theoremstyle{thmstylethree}%
\newtheorem{definition}{Definition}%
\DeclarePairedDelimiter{\abs}{\lvert}{\rvert}
\newcommand{\domD}{\mathscr{D}}
\newcommand{\dd}{\mathrm{d}}
\renewcommand{\Im}{\operatorname{Im}}
\renewcommand{\pi}{\uppi}
\DeclareMathOperator{\arsinh}{arsinh}
\DeclareMathOperator{\OO}{O}
\DeclareMathOperator{\ee}{e}
\DeclareMathOperator{\ii}{i}
\numberwithin{equation}{section}
\begin{document}

\title{Explicit error bounds of the SE and DE
formulas for integrals with logarithmic and algebraic singularity}


\author*[1]{\fnm{Tomoaki} \sur{Okayama}}\email{okayama@hiroshima-cu.ac.jp}
\author[2]{\fnm{Kosei} \sur{Arakawa}}
\author[3]{\fnm{Ryo} \sur{Kamigaki}}
\author[4]{\fnm{Eita} \sur{Yabumoto}}

\affil*[1]{\orgdiv{Graduate School of Information Sciences},
\orgname{Hiroshima City University}, \orgaddress{\street{3-4-1,
Ozuka-higashi}, \city{Asaminami-ku}, \postcode{731-3194},
\state{Hiroshima}, \country{Japan}}}

\affil[2]{\orgdiv{} \orgname{NEC Communication Systems, Ltd.},
\orgaddress{\street{
Mita Kokusai Building, 1-4-28, Mita},
\city{Minato-ku}, \postcode{108-0073}, \state{Tokyo}, \country{Japan}}}

\affil[3]{\orgdiv{} \orgname{Mazda Motor Corporation},
\orgaddress{\street{3-1 Shinchi, Fuchu-cho},
\city{Aki-gun}, \postcode{730-8670}, \state{Hiroshima}, \country{Japan}}}

\affil[4]{\orgdiv{} \orgname{Sobal Corporation},
\orgaddress{\street{Osaki MT Building, Kita-Shinagawa 5-9-11},
\city{Shinagawa-ku}, \postcode{141-0001}, \state{Tokyo}, \country{Japan}}}


\abstract{
The single exponential (SE) and double exponential (DE) formulas are widely
recognized as efficient quadrature formulas for evaluating integrals with
endpoint singularity. For integrals exhibiting algebraic singularity,
explicit error bounds in a computable form have
been provided, enabling computations with guaranteed accuracy.
Such explicit error bounds have also been provided for integrals
exhibiting logarithmic singularity. However, these error bounds have two points
to be discussed. The first point is on overestimation of
divergence speed of logarithmic singularity. The second point is on
the case where there exist both logarithmic and algebraic singularity.
To address these issues, this study provides new error bounds
for integrals with logarithmic and algebraic singularity. Although existing and
new error bounds described above pertain to integrals over the finite
interval, the SE and DE formulas are also applicable to
integrals over the semi-infinite interval. On the basis of
the new results, this study provides new error bounds for
integrals over the semi-infinite interval with logarithmic and algebraic
singularity at the origin.
}

\keywords{SE transformation, DE transformation, trapezoidal formula, error bound}


\pacs[MSC Classification]{65D30, 65D32}

\maketitle

\section{Introduction and summary}\label{sec1}

We are concerned with numerical integration of the integral
\[
 \int_0^T f(t)\dd{t},
\]
where $T>0$ and $f$ may have integrable singularity at the endpoints
of the interval.
The single-exponential (SE) and double-exponential (DE) formulas are
widely recognized as efficient quadrature formulas for
evaluating such an integral~\cite{Mori85}.
These formulas are derived by combining the SE or DE transformation
with the trapezoidal formula.
In the case of the SE formula,
the procedure begins by applying the SE transformation
\begin{equation*}
 t = \psi_1(x)
   = \frac{T}{2}\tanh\left(\frac{x}{2}\right) + \frac{T}{2}
\end{equation*}
to the given integral as
\[
 \int_0^T f(t)\dd{t} = \int_{-\infty}^{\infty} f(\psi_1(x))\psi_1'(x)\dd{x}.
\]
Next, we apply the trapezoidal formula as
\[
 \int_{-\infty}^{\infty} f(\psi_1(x))\psi_1'(x)\dd{x}
\approx h \sum_{k=-\infty}^{\infty} f(\psi_1(kh))\psi_1'(kh),
\]
where $h$ denotes mesh size.
Then, we truncate the infinite sum on the right-hand side at some
integers $M$ and $N$.
The final form of the SE formula is expressed as
\[
 \int_0^T f(t)\dd{t} \approx h\sum_{k=-M}^N f(\psi_1(kh))\psi_1'(kh),
\]
which is also referred to as the tanh rule.
After the formula was proposed~\cite{schwartz69:num_int_anal},
Takahasi and Mori~\cite{bib:DE_TakaMori1974}
derived another formula by replacing the SE transformation with
\begin{equation*}
 t = \phi_1(x)
   = \frac{T}{2}\tanh\left(\frac{\pi}{2}\sinh x\right) + \frac{T}{2},
\end{equation*}
which is referred to as the DE transformation.
The final form of the DE formula is expressed as
\[
 \int_0^T f(t)\dd{t} \approx h\sum_{k=-M}^N f(\phi_1(kh))\phi_1'(kh).
\]
Computable error bounds of the two formulas have been
provided~\cite{OkaMatsuSugi} in the case where
the integrand $f$ has algebraic singularity at the endpoints as
\begin{equation}
 \int_0^T \frac{g(t)}{t^{1-\alpha}(T - t)^{1 - \beta}}\dd{t},
\label{eq:algebraic-singular}
\end{equation}
where $g$ is bounded, and $\alpha$ and $\beta$ are positive constants.
The result has been utilized for the computation library
of verified numerical integration~\cite{bib:Yamanaka10}.

In the case where the integrand $f$ has logarithmic singularity as
\begin{equation}
 \int_0^T g(t)\log t \,\dd{t},
\label{eq:logarithmic-singular}
\end{equation}
computable error bounds of the two formulas have also been
provided~\cite{bib:Okayama14}.
However, two points remain to be discussed regarding these error bounds.
The first point concerns overestimation of divergence speed of
logarithmic singularity. To derive the error bounds,
the following inequality
\begin{equation}
 |\log z| \leq \frac{C}{|z|^{1/(2\pi)}}
\label{eq:log-estimate-by-power}
\end{equation}
was proved with some constant $C$,
and the existing result for~\eqref{eq:algebraic-singular}
was utilized with $\alpha=1 - 1/(2\pi)$ and $\beta = 1$.
Even though the inequality~\eqref{eq:log-estimate-by-power} is
mathematically correct,
the right-hand side is unnecessarily large near the origin,
because the divergence speed of algebraic singularity
is essentially higher than that of logarithmic singularity.
Such an overestimation may result in non-sharp error bounds.
The second point concerns the case where the integrand $f$ has
both logarithmic and algebraic singularity, such as
\begin{equation}
 \int_0^T \frac{g(t)}{t^{1-\alpha}(T - t)^{1 - \beta}}\log t\,\dd{t}.
\label{eq:algebraic-log-singular}
\end{equation}
The provided error bounds for~\eqref{eq:logarithmic-singular}
cannot handle the case as~\eqref{eq:algebraic-log-singular},
because it is assumed that the function $g(t)$ is bounded.

The first contribution of this study is to provide
computable error bounds of the SE and DE formulas
in the case~\eqref{eq:algebraic-log-singular},
to address the aforementioned two points.
Rather than utilizing the error bounds for~\eqref{eq:algebraic-singular},
we perform the error analysis
for the case~\eqref{eq:algebraic-log-singular} directly.
In this direct analysis, in contrast to~\eqref{eq:log-estimate-by-power},
we estimate $|\log z|$ without changing the divergence rate
(see Lemmas~\ref{lem:bound-SE1-log}
and~\ref{lem:bound-DE1-log}
for the estimate of $|\log\psi_1(x+\ii y)|$
and $|\log\phi_1(x+\ii y)|$).
This is key to improving the error bound.
However, this direct analysis requires considerable argument
beyond the estimate of $|\log z|$,
which makes this project challenging.
Completion of all necessary proofs for the analysis
is an important contribution of this paper.
Furthermore, especially for the DE formula,
the selection formulas of $h$, $M$ and $N$
have room for improvement,
because they are not optimally selected
in the previous work~\cite{bib:Okayama14}.
In this study,
we employ the optimal selection formulas of $h$, $M$ and $N$
for the DE formula~\cite{OkaKawai}.
Consequently, we obtain not only sharper error bounds
but also improved convergence profiles, as demonstrated in
numerical experiments.

The second contribution of this study is to provide
computable error bounds in the case of the semi-infinite interval as
\[
 \int_0^{\infty} f(t) \log t\,\dd{t},
\]
where $f$ may have algebraic singularity at the origin.
Two types of the SE and DE formulas were proposed
depending on the decay rate of $f$.
When $f$ decays algebraically as $t\to\infty$,
the SE and DE formulas~\cite{schwartz69:num_int_anal,bib:DE_TakaMori1974}
are expressed as
\begin{align}
 \int_0^{\infty} f(t) \log t\,\dd{t}
&\approx h\sum_{k=-M}^N f(\psi_2(kh))\log(\psi_2(kh))\psi_2'(kh),
\label{eq:SE-formula-2} \\
 \int_0^{\infty} f(t) \log t\,\dd{t}
&\approx h\sum_{k=-M}^N f(\phi_2(kh))\log(\phi_2(kh))\phi_2'(kh),
\label{eq:DE-formula-2}
\end{align}
where $\psi_2$ and $\phi_2$ denote the SE and DE transformations
defined by
\begin{align*}
 \psi_2(x) &= \ee^x,\\
 \phi_2(x) &= \ee^{(\pi/2)\sinh x},
\end{align*}
respectively.
When $f$ decays exponentially as $t\to\infty$,
the SE formula~\cite{okayama17:_error_muham} is derived
by replacing $\psi_2$ in~\eqref{eq:SE-formula-2} with
\begin{align*}
 \psi_3(x) &= \log(1 + \ee^x),\\
\intertext{and the DE formula~\cite{okayama13:_error_sinc_sinc} is derived by
replacing $\phi_2$ in~\eqref{eq:DE-formula-2} with}
 \phi_3(x) &= \log(1 + \ee^{\pi\sinh x}).
\end{align*}
For both types, we provide computable error bounds of the SE and DE
formulas in the same manner as in the case of the finite interval.

The remainder of this paper is organized as follows.
In Section~\ref{sec2},
after reviewing existing theorems,
we present new theorems, which are the main results of this study.
In Section~\ref{sec3},
we present some numerical examples.
In Section~\ref{sec4},
we provide proofs of the main results.

\section{Existing and new results}\label{sec2}

The trapezoidal formula, which is employed in both
the SE and DE formulas, works accurately if the integrand
is analytic on the strip region
\begin{equation*}
\domD_d=\{\zeta\in\mathbb{C} : \abs{\Im\zeta}< d\},
\label{eq:domDd}
\end{equation*}
where $d>0$. Therefore, in the following theorems,
the given integrand $f$ is assumed to be analytic
on the translated domain by the SE or DE transformation,
denoted by
\begin{align*}
 \psi_i(\domD_d) &= \left\{z=\psi_i(\zeta): \zeta\in\domD_d\right\}
\quad (i=1,\,2,\,3),\\
 \phi_i(\domD_d) &= \left\{z=\phi_i(\zeta): \zeta\in\domD_d\right\}
\quad (i=1,\,2,\,3),
\end{align*}
respectively.
Using the notions, we summarize existing and new error bounds below.

\subsection{Existing and new error bounds in the case of the finite interval}

The existing error bounds
for the SE and DE formulas are expressed as follows.

\begin{theorem}[Okayama~{\cite[Theorem 2]{bib:Okayama14}}]
\label{thm:exist-SE}
Let $K$ and $d$ be positive constants with $d<\pi$.
Assume that $f$ is analytic on $\psi_1(\domD_d)$,
and satisfies
\begin{equation}
 |f(z)|\leq K|\log z|
\label{eq:f-bound-log}
\end{equation}
for all $z\in\psi_1(\domD_d)$.
Let $\gamma=(2\pi - 1)/(2\pi)$, let $n$ be a positive integer,
let $h$ be selected by
\[
 h = \sqrt{\frac{2\pi d}{\gamma n}},
\]
and let $M$ and $N$ be selected by
\[
 M = n,\quad N = \left\lceil \gamma n\right\rceil.
\]
Then, it holds that
\[
\left\lvert
\int_0^T f(t)\dd{t}
- h \sum_{k=-M}^N f(\psi_1(kh))\psi_1'(kh)
\right\rvert
\leq C \ee^{-\sqrt{2\pi d \gamma n}},
\]
where $C$ is a constant independent of $n$, expressed as
\[
 C
=\frac{2KT}{\gamma \cos^{1/(2\pi)}(d/2)}
\sqrt{\pi^2 + \left\{\log\left(\frac{T}{\cos(d/2)}\right)\right\}^2}
\left[
\frac{2}{(1 - \ee^{-\sqrt{2\pi d \gamma}})\cos^{\gamma+1}(d/2)}
+ 1
\right].
\]
\end{theorem}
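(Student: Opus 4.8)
The plan is to reduce the logarithmic case to the algebraic case~\eqref{eq:algebraic-singular}, for which a computable error bound is already available~\cite{OkaMatsuSugi}, rather than analyzing the transformed integrand from scratch. The bridge is the pointwise estimate~\eqref{eq:log-estimate-by-power}, namely $\abs{\log z}\le C/\abs{z}^{1/(2\pi)}$, valid on the image $\psi_1(\domD_d)$. Granting this, the hypothesis~\eqref{eq:f-bound-log} gives $\abs{f(z)}\le K\abs{\log z}\le KC/\abs{z}^{1/(2\pi)}$ for every $z\in\psi_1(\domD_d)$, which is exactly the bound required in the algebraic framework with a bounded numerator $\abs{g}\le KC$ and exponents $\alpha=\gamma=1-1/(2\pi)$ and $\beta=1$ (so that $\abs{z}^{1-\alpha}=\abs{z}^{1/(2\pi)}$ and the factor $\abs{T-z}^{1-\beta}$ disappears). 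With these exponents the mesh size $h=\sqrt{2\pi d/(\gamma n)}$ and the truncation indices $M=n$, $N=\lceil\gamma n\rceil$ coincide with the selection prescribed by the algebraic theorem: $h$ is governed by $\min\{\alpha,\beta\}=\gamma$, while the ratio $N/M\approx\alpha/\beta=\gamma$ balances the two truncation tails. Invoking that theorem then yields the stated bound $C\ee^{-\sqrt{2\pi d\gamma n}}$.

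It therefore remains to establish~\eqref{eq:log-estimate-by-power} on $\psi_1(\domD_d)$ with an explicit constant; equivalently, to bound $C=\sup_{z\in\psi_1(\domD_d)}\abs{\log z}\,\abs{z}^{1/(2\pi)}$. Writing $z=\psi_1(x+\ii y)=T/(1+\ee^{-(x+\ii y)})$ with $\abs{y}<d<\pi$, I would first exploit that the condition $d<\pi$ keeps $\psi_1$ away from its poles at $\Im\zeta=\pm\pi$, so that the image is a bounded region on which one can establish explicit bounds $\abs{z}\le T/\cos(d/2)$ and $\abs{\arg z}<\pi$. Using $\abs{\log z}=\sqrt{(\log\abs{z})^2+(\arg z)^2}$ and decoupling the two contributions, the estimate reduces to bounding $\sqrt{(\log\abs{z})^2+\pi^2}\,\abs{z}^{1/(2\pi)}$ as a function of $\abs{z}$ alone. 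The decisive analytic point is that, although $\abs{\log z}\to\infty$ as $z\to0$, the algebraic weight $\abs{z}^{1/(2\pi)}$ dominates the logarithm there, so this product stays bounded and, after a monotonicity check, is maximized at the largest admissible modulus; evaluating at $\abs{z}=T/\cos(d/2)$ and $\abs{\arg z}=\pi$ gives $C=\sqrt{\pi^2+\{\log(T/\cos(d/2))\}^2}\,(T/\cos(d/2))^{1/(2\pi)}$. Substituting $\abs{g}\le KC$ into the algebraic-singularity constant then reproduces exactly the factors $\cos^{-1/(2\pi)}(d/2)$, $\sqrt{\pi^2+\{\log(T/\cos(d/2))\}^2}$, and $T^{\gamma}$ appearing in the stated $C$.

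The main obstacle will be precisely this uniform log-power estimate with a \emph{tight, computable} constant: one must simultaneously control $\abs{z}$ and $\abs{\arg z}$ on the closed strip $\abs{\Im\zeta}\le d$ and verify the monotonicity needed to locate the supremum of $\abs{\log z}\,\abs{z}^{1/(2\pi)}$ at an explicitly identifiable point. Once~\eqref{eq:log-estimate-by-power} is secured, the rest is packaged inside the algebraic-singularity theorem~\cite{OkaMatsuSugi} and is routine: the total error splits into the discretization error of the infinite trapezoidal rule—controlled, through the analyticity of $x\mapsto f(\psi_1(x))\psi_1'(x)$ on $\domD_d$ together with a boundary estimate, by a term of order $\ee^{-2\pi d/h}$—and the two truncation tails, of orders $\ee^{-\gamma M h}$ and $\ee^{-Nh}$. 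The choices $h=\sqrt{2\pi d/(\gamma n)}$, $M=n$, and $N=\lceil\gamma n\rceil$ are exactly those that make all three contributions of the common order $\ee^{-\sqrt{2\pi d\gamma n}}$, which gives the claimed bound.
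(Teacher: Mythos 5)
This theorem is not proved in the paper at all: it is quoted verbatim from \cite[Theorem 2]{bib:Okayama14}, so there is no internal proof to compare against. That said, your proposal reconstructs exactly the derivation that the introduction attributes to that reference: establish the log--power estimate~\eqref{eq:log-estimate-by-power} on $\psi_1(\domD_d)$ and feed the resulting bound $|f(z)|\leq KC|z|^{-1/(2\pi)}$ into the algebraic-singularity theorem of \cite{OkaMatsuSugi} with $\alpha=\gamma=1-1/(2\pi)$ and $\beta=1$, for which the stated $h$, $M$, $N$ are the prescribed balanced choices. Your identification of the constant is consistent with the stated $C$ (note $T\cos^{-1/(2\pi)}(d/2)=T^{\gamma}\,(T/\cos(d/2))^{1/(2\pi)}$, and $T^{\gamma}=T^{\alpha+\beta-1}$), and the monotonicity check you flag as the main obstacle does go through: writing $u=\log|z|$, the sign condition for $r\mapsto\sqrt{u^2+\pi^2}\,r^{1/(2\pi)}$ to be nondecreasing reduces to $u^2+2\pi u+\pi^2=(u+\pi)^2\geq 0$, so the supremum of $|\log z|\,|z|^{1/(2\pi)}$ is attained at $|z|=T/\cos(d/2)$, $|\arg z|=\pi$, giving exactly the factor $\sqrt{\pi^2+\{\log(T/\cos(d/2))\}^2}$. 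The one point worth emphasizing is the contrast with the paper's own contribution: Theorem~\ref{thm:new-SE1} deliberately abandons this reduction because~\eqref{eq:log-estimate-by-power} overestimates the divergence speed of the logarithm near the origin, which is precisely what forces the factor $\gamma<1$ into the exponent of $\ee^{-\sqrt{2\pi d\gamma n}}$; the direct analysis of Section~\ref{sec:proof-finite}, which estimates $|\log\psi_1(x+\ii y)|$ via Lemma~\ref{lem:bound-SE1-log} without degrading the divergence rate, recovers the sharper rate $\sqrt{n}\,\ee^{-\sqrt{2\pi dn}}$ for the same class of integrands and extends to genuine algebraic factors.
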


\begin{theorem}[Okayama~{\cite[Theorem 3]{bib:Okayama14}}]
\label{thm:exist-DE}
Let $K$ and $d$ be positive constants with $d<\pi/2$.
Assume that $f$ is analytic on $\phi_1(\domD_d)$,
and satisfies~\eqref{eq:f-bound-log} for all $z\in\phi_1(\domD_d)$.
Let $\gamma=(2\pi - 1)/(2\pi)$, let $n$ be a positive integer,
let $h$ be selected by
\[
 h = \frac{\log(4 d n/\gamma)}{n},
\]
and let $M$ and $N$ be selected by
\[
 M = n, \quad N = n - \left\lfloor \frac{\log(1/\gamma)}{h}\right\rfloor.
\]
Then, it holds that
\[
\left\lvert
\int_0^T f(t)\dd{t}
- h \sum_{k=-M}^N f(\phi_1(kh))\phi_1'(kh)
\right\rvert
\leq C \ee^{-2\pi d n/\log(4 d n/\gamma)},
\]
where $C$ is a constant independent of $n$, expressed as
\[
 C
=\frac{2KT c_d^{1/(2\pi)}}{\gamma}
\sqrt{\pi^2 + \left\{\log\left(T c_d\right)\right\}^2}
\left[
\frac{2c_d^{\gamma+1}}{(1 - \ee^{-\pi\gamma\ee/2})\cos d}
+ \ee^{\pi/2}
\right],
\]
where $c_d = 1/\cos((\pi/2)\sin d)$.
\end{theorem}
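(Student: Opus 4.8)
The plan is to reduce the claim to the already-established error bound of the DE formula for an integrand with purely \emph{algebraic} endpoint singularity (the result quoted for~\eqref{eq:algebraic-singular}), using the pointwise estimate~\eqref{eq:log-estimate-by-power} to trade the logarithmic growth for an algebraic one. First I would analyze the geometry of the image domain $\phi_1(\domD_d)$. Writing $\phi_1(\zeta)=T/(1+\ee^{-\pi\sinh\zeta})$ with $\zeta=x+\ii y$ and $\abs{y}<d<\pi/2$, the poles of $\phi_1$ (where $1+\ee^{-\pi\sinh\zeta}=0$) are avoided precisely because $\Re(\sinh\zeta)=\sinh x\cos y$ vanishes only on the imaginary axis, where $\abs{\Im(\sinh\zeta)}=\abs{\sin y}\le\sin d<1$. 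The same computation yields the explicit bounds $\abs{\phi_1(\zeta)}\le Tc_d$ and $\abs{1/\cosh((\pi/2)\sinh\zeta)}\le c_d$ with $c_d=1/\cos((\pi/2)\sin d)$, the extremal configuration being the corner $\zeta=\pm\ii d$. I would also record the endpoint behaviour $\phi_1(x)\to0$ as $x\to-\infty$ and $\phi_1(x)\to T$ as $x\to+\infty$, so that the origin is the only genuine singularity and the point $T$ is regular.

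Next I would combine the hypothesis~\eqref{eq:f-bound-log} with~\eqref{eq:log-estimate-by-power} to obtain $\abs{f(z)}\le K\abs{\log z}\le KC\abs{z}^{-1/(2\pi)}$ for all $z\in\phi_1(\domD_d)$. This is exactly the integrand bound required by the algebraic template~\eqref{eq:algebraic-singular} with the exponents $\alpha=(2\pi-1)/(2\pi)$ at the origin (so that $\abs{z}^{-(1-\alpha)}=\abs{z}^{-1/(2\pi)}$) and $\beta=1$ at $T$ (introducing no singularity there), the bounded factor $g$ being absorbed into $KC$. With this identification $\gamma=\alpha/\beta=(2\pi-1)/(2\pi)$ coincides with the constant $\gamma$ of the statement, and the prescribed $h$, $M=n$, and $N=n-\lfloor\log(1/\gamma)/h\rfloor$ are exactly the parameter selections used in that DE bound for these exponents. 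Invoking the existing algebraic DE error bound then gives an estimate of the asserted form $C\ee^{-2\pi d n/\log(4dn/\gamma)}$, after which it remains only to substitute the geometric quantities $c_d$ and $Tc_d$ and to simplify in order to reach the closed form of $C$.

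The hard part will be the estimate~\eqref{eq:log-estimate-by-power} on this domain, which I regard as the main obstacle. The delicate point is that $\phi_1(\domD_d)$ winds around the origin: as $x\to-\infty$ one has $\arg\phi_1(x+\ii y)\sim\pi\cosh x\,\sin y$, which is unbounded, so the naive inequality $\abs{\log z}\le\sqrt{\pi^2+(\log\abs{z})^2}$ is simply unavailable. This is exactly why one must pass through a power of $\abs{z}$: near the origin $\abs{\log\phi_1(\zeta)}$ grows only like $\pi\abs{\sinh\zeta}$, single-exponential in $\abs{x}$, whereas $\abs{\phi_1(\zeta)}^{1/(2\pi)}$ decays double-exponentially, so the product $\abs{\log z}\,\abs{z}^{1/(2\pi)}$ stays bounded over $\overline{\domD_d}$; its supremum is the constant $C$ of~\eqref{eq:log-estimate-by-power}, and together with $\abs{z}\le Tc_d$ and $\abs{\arg z}$ controlled near $T$ it produces the factors $c_d^{1/(2\pi)}$ and $\sqrt{\pi^2+(\log(Tc_d))^2}$ appearing in the final constant. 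Once this estimate and the matching of parameters are in hand, the remainder is the routine constant bookkeeping inherited from the algebraic DE bound.
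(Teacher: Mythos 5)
This theorem is quoted verbatim from Okayama~\cite[Theorem~3]{bib:Okayama14}; the paper does not reprove it, but its introduction describes exactly your route---bounding $|\log z|$ by $C|z|^{-1/(2\pi)}$ via~\eqref{eq:log-estimate-by-power} and then invoking the existing algebraic-singularity DE bound with $\alpha=1-1/(2\pi)=\gamma$ and $\beta=1$---as the method of the original proof. Your proposal is therefore correct in approach and essentially identical to it (and your reconstruction of where the factors $c_d^{1/(2\pi)}$ and $\sqrt{\pi^2+\{\log(Tc_d)\}^2}$ enter, as well as your identification of~\eqref{eq:log-estimate-by-power} on the winding domain $\phi_1(\domD_d)$ as the main technical obstacle, is consistent with the stated constant).
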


In this paper,
we present the following error bounds for the SE and DE formulas.
Their proofs are provided in Section~\ref{sec:proof-finite}.

\begin{theorem}
\label{thm:new-SE1}
Let $K$, $\alpha$, $\beta$ and $d$ be positive constants with $d<\pi$.
Assume that $f$ is analytic on $\psi_1(\domD_d)$,
and satisfies
\begin{equation}
 |f(z)|\leq K|z|^{\alpha-1}|T - z|^{\beta - 1}|\log z|
\label{eq:f-bound-log-algebraic}
\end{equation}
for all $z\in\psi_1(\domD_d)$.
Let $\mu=\min\{\alpha,\beta\}$,
let $n$ be a positive integer,
let $h$ be selected by
\begin{equation}
 h = \sqrt{\frac{2\pi d}{\mu n}},
\label{eq:standard-h-SE}
\end{equation}
and let $M$ and $N$ be selected by
\begin{equation}
M=\left\lceil\frac{\mu}{\alpha}n\right\rceil,\quad
N=\left\lceil\frac{\mu}{\beta}n\right\rceil.
\label{eq:def-MN-SE}
\end{equation}
Let $n$ be taken sufficiently large
so that $n\geq 1/(2\pi d \mu)$ is satisfied.
Then, it holds that
\[
\left\lvert
\int_0^T f(t)\dd{t}
- h \sum_{k=-M}^N f(\psi_1(kh))\psi_1'(kh)
\right\rvert
\leq C \sqrt{n} \ee^{-\sqrt{2\pi d \mu n}},
\]
where $C$ is a constant independent of $n$, expressed as
\[
 C
=\frac{KT^{\alpha+\beta-1}}{\mu}
\left[
\frac{4|\log T|\cos(d/2) + 2l_{\mu}}{(1 - \ee^{-\sqrt{2\pi d \mu}})\cos^{\alpha+\beta+1}(d/2)}
+ 2|\log T| + l_{\mu} + \sqrt{\frac{2\pi d}{\mu}}
\right],
\]
where $l_{\mu}=2\log 2 + (1/\mu)$.
\end{theorem}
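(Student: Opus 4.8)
The plan is to set $F(x)=f(\psi_1(x))\psi_1'(x)$ and split the quadrature error into a discretization error of the trapezoidal rule over $\mathbb{R}$ and a truncation error for the two tails:
\[
\left\lvert
\int_0^T f(t)\dd{t} - h\sum_{k=-M}^N F(kh)
\right\rvert
\leq
\underbrace{\left\lvert
\int_{-\infty}^{\infty} F(x)\dd{x} - h\sum_{k=-\infty}^{\infty} F(kh)
\right\rvert}_{=:E_D}
+
\underbrace{h\sum_{k=-\infty}^{-M-1}\abs{F(kh)} + h\sum_{k=N+1}^{\infty}\abs{F(kh)}}_{=:E_T}.
\]
The equality $\int_0^T f\dd{t}=\int_{-\infty}^{\infty}F\dd{x}$ is the substitution $t=\psi_1(x)$, and analyticity of $F$ on $\domD_d$ follows from that of $f$ on $\psi_1(\domD_d)$ together with the analyticity of $\psi_1$.

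For $E_D$ I would invoke the standard estimate for the trapezoidal rule applied to a function analytic on $\domD_d$, which controls $E_D$ by $N_1(F,d)$ times a factor decaying like $\ee^{-2\pi d/h}$, where $N_1(F,d)=\lim_{y\to d}\int_{-\infty}^{\infty}(\abs{F(x+\ii y)}+\abs{F(x-\ii y)})\dd{x}$. This reduces the problem to a finite, $n$-independent bound on $N_1(F,d)$. Using the closed forms $\psi_1(\zeta)=T/(1+\ee^{-\zeta})$, $T-\psi_1(\zeta)=T/(1+\ee^{\zeta})$ and $\psi_1'(\zeta)=T\ee^{-\zeta}/(1+\ee^{-\zeta})^2$, the hypothesis~\eqref{eq:f-bound-log-algebraic} factors the integrand as $\abs{\psi_1(\zeta)}^{\alpha-1}\abs{T-\psi_1(\zeta)}^{\beta-1}\abs{\log\psi_1(\zeta)}\,\abs{\psi_1'(\zeta)}$. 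I would bound each algebraic factor on the line $\Im\zeta=\pm y$ by its real-axis counterpart times a power of $\cos(d/2)$, which is the source of the $\cos^{\alpha+\beta+1}(d/2)$ in the denominator of $C$, and, crucially, estimate the logarithmic factor by Lemma~\ref{lem:bound-SE1-log} rather than by the crude power bound~\eqref{eq:log-estimate-by-power}. The latter step keeps the divergence rate intact and produces the $\abs{\log T}$- and $l_{\mu}$-type contributions.

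For $E_T$ I would work on the real axis, where the factors give $\abs{F(x)}\lesssim KT^{\alpha+\beta-1}(\abs{x}+\abs{\log T})\ee^{\alpha x}$ as $x\to-\infty$ (the logarithm producing the linear factor $\abs{x}$ near the origin) and $\abs{F(x)}\lesssim KT^{\alpha+\beta-1}\abs{\log T}\,\ee^{-\beta x}$ as $x\to+\infty$ (where $\log\psi_1$ stays bounded). The left tail is then an arithmetico-geometric sum $h\sum_{k\le -M-1}(\abs{kh}+\abs{\log T})\ee^{\alpha kh}$ and the right tail a geometric sum $h\sum_{k\ge N+1}\ee^{-\beta kh}$, each dominated by its leading term. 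The asymmetric choice~\eqref{eq:def-MN-SE} is exactly what enforces $\alpha Mh\approx\beta Nh\approx\mu nh$, so both tails decay like $\ee^{-\mu nh}$; substituting~\eqref{eq:standard-h-SE} gives $\mu nh=2\pi d/h=\sqrt{2\pi d\mu n}$, which simultaneously matches the discretization exponent. The surviving linear factor $\abs{Mh}=\OO(\sqrt{n})$ from the logarithm on the left tail is precisely the source of the $\sqrt{n}$ in the stated bound.

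Finally I would collect the three contributions, factor out $\sqrt{n}\,\ee^{-\sqrt{2\pi d\mu n}}$, and absorb all remaining prefactors into the $n$-independent constant $C$, invoking the mild largeness condition $n\ge 1/(2\pi d\mu)$ (equivalently $\sqrt{2\pi d\mu n}\ge 1$) wherever a step requires it. I expect the main obstacle to be the discretization estimate: obtaining an explicit finite bound on $N_1(F,d)$ requires controlling both algebraic singularities and the logarithm simultaneously on the boundary of the strip, where, near the endpoints, $\psi_1(x\pm\ii y)$ approaches $0$ or $T$ and both the fractional power and the logarithm are delicate. It is here that the sharp estimate of Lemma~\ref{lem:bound-SE1-log} is indispensable for preventing $C$ from being inflated.
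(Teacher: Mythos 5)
Your proposal follows essentially the same route as the paper's proof: the same splitting into discretization and truncation errors, the same classical trapezoidal estimate reducing the former to a bound on $\mathcal{N}(F,d)$ obtained from Lemma~\ref{lem:bound-SE-1p-exp} together with the sharp logarithm estimate of Lemma~\ref{lem:bound-SE1-log}, and the same tail estimates via monotone comparison with integrals, with the choice of $M$ and $N$ equalizing the exponents at $\mu nh=\sqrt{2\pi d\mu n}$ and the factor $Mh=\OO(\sqrt{n})$ producing the $\sqrt{n}$. The only point you gloss over is the verification of the vanishing condition~\eqref{eq:F-imag-int-0} needed for $F\in\mathbf{B}(\domD_d)$, which the paper checks with the same pointwise bounds, so this is not a substantive gap.
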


\begin{theorem}
\label{thm:new-DE1}
Let $K$, $\alpha$, $\beta$ and $d$ be positive constants with $d<\pi/2$.
Assume that $f$ is analytic on $\phi_1(\domD_d)$,
and satisfies~\eqref{eq:f-bound-log-algebraic} for all $z\in\phi_1(\domD_d)$.
Let $\mu=\min\{\alpha,\beta\}$, let $n$ be a positive integer,
let $h$ be selected by
\begin{equation}
 h = \frac{\arsinh(2dn/\mu)}{n}, \label{eq:improve-h-DE}
\end{equation}
and let $M$ and $N$ be selected by
\begin{equation}
 M=\left\lceil\frac{1}{h}\arsinh\left(\frac{\mu}{\alpha}q\left(\frac{2dn}{\mu}\right)\right)\right\rceil,\quad
 N= \left\lceil\frac{1}{h}\arsinh\left(\frac{\mu}{\beta}q\left(\frac{2dn}{\mu}\right)\right)\right\rceil,
\label{eq:def-MN-DE}
\end{equation}
where $q(x)=x/\arsinh x$.
Let $n$ be taken sufficiently large
so that $n\geq \mu\sinh(1)/(2d)$ and $h\leq \pi d$ are satisfied.
Then, it holds that
\[
\left\lvert
\int_0^T f(t)\dd{t}
- h \sum_{k=-M}^N f(\phi_1(kh))\phi_1'(kh)
\right\rvert
\leq C n\ee^{-2\pi d n/\arsinh(2 d n/\mu)},
\]
where $C$ is a constant independent of $n$, expressed as
\[
 C
=\frac{KT^{\alpha+\beta-1}}{\mu}
\left[
\frac{c_d^{\alpha+\beta}(4|\log T| \cos d + 2l_\mu c_d)}{(1 - \ee^{-\pi\mu q(2d/\mu)})\cos^2 d}
+ 2|\log T| + l_{\mu} + \frac{2\pi d}{\mu}
\right],
\]
where $c_d = 1/\cos((\pi/2)\sin d)$ and $l_{\mu}=2\log 2 + (1/\mu)$.
\end{theorem}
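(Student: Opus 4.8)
The plan is to carry out the error analysis directly for the integrand transformed by $\phi_1$, exactly as the intended improvement over~\eqref{eq:log-estimate-by-power} requires, and in close parallel to the SE case (Theorem~\ref{thm:new-SE1}) with the double-exponential substitution in place of the single-exponential one. Writing $F(x)=f(\phi_1(x))\phi_1'(x)$, which is analytic on $\domD_d$ since $f$ is analytic on $\phi_1(\domD_d)$, I would use $\int_0^T f(t)\dd{t}=\int_{-\infty}^{\infty}F(x)\dd{x}$ and split
\[
\left\lvert\int_{-\infty}^{\infty}F(x)\dd{x}-h\sum_{k=-M}^{N}F(kh)\right\rvert
\le\left\lvert\int_{-\infty}^{\infty}F(x)\dd{x}-h\sum_{k=-\infty}^{\infty}F(kh)\right\rvert
+h\sum_{k=-\infty}^{-M-1}\abs{F(kh)}+h\sum_{k=N+1}^{\infty}\abs{F(kh)}
\]
into the discretization error and the two truncation errors. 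The discretization error is handled by the standard bound for the trapezoidal rule applied to a function analytic on $\domD_d$ with finite boundary integrals, which controls it by $\ee^{-2\pi d/h}/(1-\ee^{-2\pi d/h})$ times the boundary integrals $\int_{-\infty}^{\infty}\abs{F(x\pm\ii d')}\dd{x}$ after letting $d'\to d$; the choice~\eqref{eq:improve-h-DE} gives $2\pi d/h=2\pi d n/\arsinh(2dn/\mu)$, which is precisely the exponent appearing in the claimed bound.

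The core of the work is to convert the pointwise hypothesis~\eqref{eq:f-bound-log-algebraic} into explicit bounds for $\abs{F(x+\ii y)}$ valid for $\abs{y}\le d$. I would estimate the four factors $\abs{\phi_1(x+\ii y)}^{\alpha-1}$, $\abs{T-\phi_1(x+\ii y)}^{\beta-1}$, $\abs{\log\phi_1(x+\ii y)}$ and $\abs{\phi_1'(x+\ii y)}$ separately. The two algebraic factors need upper or lower bounds on $\abs{\phi_1}$ and $\abs{T-\phi_1}$ according to the signs of $\alpha-1$ and $\beta-1$; since $\abs{\phi_1(x)}\sim T\ee^{\pi\sinh x}$ as $x\to-\infty$ and $\abs{T-\phi_1(x)}\sim T\ee^{-\pi\sinh x}$ as $x\to+\infty$, these combine with the double-exponential decay of $\abs{\phi_1'}$ to give genuinely double-exponential decay of $\abs{F}$ at both ends. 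The restriction $d<\pi/2$ enters here: it keeps $\phi_1(\domD_d)$ away from the poles of $\tanh$, so that the boundary values stay finite, with the worst case measured by $c_d=1/\cos((\pi/2)\sin d)$. The logarithmic factor is the novel ingredient and is supplied by Lemma~\ref{lem:bound-DE1-log}, which bounds $\abs{\log\phi_1(x+\ii y)}$ while preserving its logarithmic growth; on the left end this growth is of order $\abs{\sinh x}$ and is exactly what the double-exponential decay absorbs, producing the constant $l_\mu=2\log 2+1/\mu$.

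With these estimates in hand, the discretization term is bounded by a finite, $n$-independent multiple of $\ee^{-2\pi d/h}$ (the boundary integrals converge because of the double-exponential decay), and the truncation tails are estimated by summing the resulting expression geometrically via convexity of $\sinh$; this is where the prefactor $1/(1-\ee^{-\pi\mu q(2d/\mu)})$ originates. The asymmetric cut-offs~\eqref{eq:def-MN-DE}, built from $q(x)=x/\arsinh x$, are chosen so that $\sinh(Mh)\ge(\mu/\alpha)q(2dn/\mu)$ and $\sinh(Nh)\ge(\mu/\beta)q(2dn/\mu)$, which makes $\alpha\pi\sinh(Mh)\ge 2\pi d/h$ and $\beta\pi\sinh(Nh)\ge 2\pi d/h$; thus both truncation tails are also $\OO(\ee^{-2\pi d/h})$. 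The logarithmic factor evaluated at the truncation points is of size comparable to $\sinh(Mh)\sim n/\arsinh(2dn/\mu)$, which after multiplication by $h$ leaves a residual factor $n/\arsinh(2dn/\mu)$; the hypothesis $n\ge\mu\sinh(1)/(2d)$ is then used to replace this by $n$, giving the polynomial prefactor in the final bound. Collecting the three contributions and factoring out $\ee^{-2\pi d n/\arsinh(2dn/\mu)}$ produces $C n\ee^{-2\pi d n/\arsinh(2dn/\mu)}$ with $C$ of the stated explicit form.

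I expect the main obstacle to be Lemma~\ref{lem:bound-DE1-log} together with its clean incorporation into the bound on $\abs{F}$. Unlike the purely algebraic case treated in~\cite{OkaMatsuSugi}, the logarithmic factor is unbounded, grows at the left endpoint, and must be controlled simultaneously with the lower bound on $\abs{\phi_1}$ that keeps $\abs{\phi_1}^{\alpha-1}$ finite when $\alpha<1$, all uniformly for $\abs{y}\le d$ and up to the boundary $\abs{y}=d$. A secondary difficulty is the fully explicit tracking of constants through the boundary integrals and the tail sums so that $C$ is provably independent of $n$, together with the limiting argument $d'\to d$ needed to apply the trapezoidal bound at the boundary of analyticity and the careful verification, using $n\ge\mu\sinh(1)/(2d)$ and $h\le\pi d$, that the ceiling operations in~\eqref{eq:def-MN-DE} do not spoil the balance between the discretization and truncation exponents. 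Once Lemma~\ref{lem:bound-DE1-log} and the companion estimates for $\abs{\phi_1}$, $\abs{T-\phi_1}$ and $\abs{\phi_1'}$ are established, the remaining steps are technical but routine.
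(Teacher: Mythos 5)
Your plan matches the paper's proof in all essentials: the same discretization/truncation split, the classical trapezoidal bound on $\domD_d$ with $\mathcal{N}(F,d)$ made finite via Lemmas~\ref{lem:bound-DE-1p-exp} and~\ref{lem:bound-DE1-log}, and the same use of $q$ and the ceiling choices~\eqref{eq:def-MN-DE} to force $\pi\alpha\sinh(Mh)\geq 2\pi d/h$ and $\pi\beta\sinh(Nh)\geq 2\pi d/h$, with $M\leq n$ (from $\arsinh(2dn/\mu)\geq 1$) supplying the prefactor $n$. Two small corrections to your bookkeeping, neither of which breaks the argument: the factor $1/(1-\ee^{-\pi\mu q(2d/\mu)})$ in $C$ originates from the discretization term $1/(1-\ee^{-2\pi d/h})$ (note $2\pi d/h=\pi\mu q(2dn/\mu)$), not from a geometric summation of the truncation tails, which the paper instead bounds by integral comparison using the monotonicity of $\sinh(x)\cosh(x)\ee^{\mp\pi\gamma\sinh x}$ (Proposition~\ref{prop:DE1-monotone}); and no case analysis on the signs of $\alpha-1$ and $\beta-1$ is needed, since $\phi_1'$ combines exactly with the two algebraic factors to give $|F(\zeta)|\leq KT^{\alpha+\beta-1}\pi|\cosh\zeta|\,|\log(T/(1+\ee^{-\pi\sinh\zeta}))|/(|1+\ee^{-\pi\sinh\zeta}|^{\alpha}|1+\ee^{\pi\sinh\zeta}|^{\beta})$, leaving only positive powers in the denominator.
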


Let us compare the existing and new theorems.
For a fair comparison,
we set $\alpha=\beta=1$ in the new theorems here,
so that~\eqref{eq:f-bound-log-algebraic} becomes
the same condition as~\eqref{eq:f-bound-log}.
According to Theorem~\ref{thm:exist-SE} (existing theorem),
the convergence rate of the SE formula is
$\OO(\ee^{-\sqrt{2\pi d \gamma n}})$,
where $\gamma = (2\pi  - 1)/(2\pi)$.
In contrast,
Theorem~\ref{thm:new-SE1} (new theorem)
states that the convergence rate of the SE formula is
$\OO(\sqrt{n}\ee^{-\sqrt{2\pi d  n}})$, which is higher than
$\OO(\ee^{-\sqrt{2\pi d \gamma n}})$
(note that $\gamma<1$).

However, we must note that the aforementioned convergence rates are
given with respect to $n$, not with respect to
the total number of function evaluations ($M+N+1$).
For this reason, we cannot conclude immediately
from the theorems above that
the SE formula in the new theorem converges more rapidly
than that in the existing theorem.
In fact, the total number of function evaluations
($M+N+1$) differs according to the theorems;
$M+N+1=2n+1$ in Theorem~\ref{thm:new-SE1},
whereas $M+N+1=n + \lceil\gamma n\rceil + 1$
in Theorem~\ref{thm:exist-SE}, which is less than $2n+1$.

Similarly, in the case of the DE formula as well,
it is difficult to judge which is better from Theorems~\ref{thm:exist-DE}
and~\ref{thm:new-DE1}.
To compare the convergence profiles with respect to $M+N+1$ practically,
numerical experiments are useful, which are demonstrated in the next section.


\subsection{New error bounds in the case of the semi-finite interval}

Here, we present new error bounds
for the SE and DE formulas
in the case where the integration interval is $(0,\infty)$,
i.e., the semi-infinite interval.
Their proofs are provided in Section~\ref{sec:proof-semi-infinite}.

We consider two cases depending on the decay rate of
the integrand: algebraic decay or exponential decay.
First, we consider the case of algebraic decay.

\begin{theorem}
\label{thm:new-SE2}
Let $K$, $\alpha$, $\beta$ and $d$ be positive constants with $d<\pi/2$.
Assume that $f$ is analytic on $\psi_2(\domD_d)$,
and satisfies
\begin{equation}
 |f(z)|\leq K\frac{|z|^{\alpha-1}}{|1+z^2|^{(\alpha+\beta)/2}}|\log z|
\label{eq:f-bound-log-algebraic-2}
\end{equation}
for all $z\in\psi_2(\domD_d)$.
Let $\mu=\min\{\alpha,\beta\}$,
let $n$ be a positive integer,
let $h$ be selected by~\eqref{eq:standard-h-SE},
and let $M$ and $N$ be selected by~\eqref{eq:def-MN-SE}.
Let $n$ be taken sufficiently large
so that $n\geq 1/(2\pi d \mu)$ is satisfied.
Then, it holds that
\[
\left\lvert
\int_0^{\infty} f(t)\dd{t}
- h \sum_{k=-M}^N f(\psi_2(kh))\psi_2'(kh)
\right\rvert
\leq C \sqrt{n} \ee^{-\sqrt{2\pi d \mu n}},
\]
where $C$ is a constant independent of $n$, expressed as
\[
 C
=\frac{2K}{\mu^2}
\left[
\frac{2(1 + \mu d)}{(1 - \ee^{-\sqrt{2\pi d \mu}})\cos^{(\alpha+\beta)/2}d}
+ \sqrt{2\pi d \mu} + 1
\right].
\]
\end{theorem}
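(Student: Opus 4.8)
The plan is to transplant the problem to the strip and split the error. Setting $F(\zeta)=f(\psi_2(\zeta))\psi_2'(\zeta)=f(\ee^{\zeta})\ee^{\zeta}$, the quantity to be bounded equals $|\int_{-\infty}^{\infty}F(x)\dd{x}-h\sum_{k=-M}^{N}F(kh)|$, which I would bound by the triangle inequality as the sum of the \emph{discretization error} $|\int_{-\infty}^{\infty}F(x)\dd{x}-h\sum_{k=-\infty}^{\infty}F(kh)|$ and the two \emph{truncation errors} $h\sum_{k<-M}|F(kh)|$ and $h\sum_{k>N}|F(kh)|$. Each of the three pieces is then estimated against the single-exponential decay of $F$.

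The key step is a pointwise bound for $F$ on the closed strip $|\Im\zeta|\le d$. Because $\psi_2(\zeta)=\ee^{\zeta}$, we have $|\psi_2(\zeta)|=\ee^{x}$, $|\psi_2'(\zeta)|=\ee^{x}$ and, since $|\Im\zeta|<d<\pi/2$ keeps $\zeta$ on the principal branch, $\log\psi_2(\zeta)=\zeta$, so that $|\log\psi_2(\zeta)|=|\zeta|\le|x|+d$; in contrast to \eqref{eq:log-estimate-by-power}, the logarithmic factor is thereby carried exactly as $|\zeta|$ rather than converted into a spurious algebraic power. The only delicate factor is $|1+\psi_2(\zeta)^2|=|1+\ee^{2\zeta}|$. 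Here I would use the factorization $1+\ee^{2\zeta}=2\ee^{\zeta}\cosh\zeta$ together with $|\cosh(x+\ii y)|^2=\cosh^2 x-\sin^2 y\ge\cosh^2 x\cos^2 y\ge\cosh^2 x\cos^2 d$, which yields the clean lower bound $|1+\ee^{2\zeta}|\ge(1+\ee^{2x})\cos d$ (this also records why $d<\pi/2$ is needed, since it keeps $1+\ee^{2\zeta}$ away from $0$). Combining these estimates gives
\[
|F(x+\ii y)|\le\frac{K(|x|+d)\ee^{\alpha x}}{\cos^{(\alpha+\beta)/2}d\,(1+\ee^{2x})^{(\alpha+\beta)/2}},
\]
which decays like $\ee^{-\alpha|x|}$ as $x\to-\infty$ and like $\ee^{-\beta x}$ as $x\to+\infty$, apart from the harmless polynomial factor from the logarithm.

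For the discretization error I would invoke the standard explicit estimate for the trapezoidal rule on $\domD_d$, bounding it by $\mathcal{N}(F)\,\ee^{-2\pi d/h}/(1-\ee^{-2\pi d/h})$ with $\mathcal{N}(F)=\int_{-\infty}^{\infty}(|F(x+\ii d)|+|F(x-\ii d)|)\dd{x}$, whose hypotheses (analyticity on $\domD_d$ and the finiteness and decay of $\mathcal{N}$) follow from the strip bound. Inserting that bound with $y=\pm d$ and splitting the integral at $x=0$ (using $(1+\ee^{2x})^{(\alpha+\beta)/2}\ge\ee^{(\alpha+\beta)x}$ for $x>0$ and $\ge1$ for $x<0$), the two halves evaluate to $1/\beta^2+d/\beta$ and $1/\alpha^2+d/\alpha$, so that $\mathcal{N}(F)\le 4K(1+\mu d)/(\mu^2\cos^{(\alpha+\beta)/2}d)$ after using $\alpha,\beta\ge\mu$. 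Since $h=\sqrt{2\pi d/(\mu n)}$ gives $2\pi d/h=\sqrt{2\pi d\mu n}$ and $1/(1-\ee^{-2\pi d/h})\le1/(1-\ee^{-\sqrt{2\pi d\mu}})$ for $n\ge1$, this reproduces the first term inside the bracket of $C$, carrying no factor $\sqrt{n}$.

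For the truncation errors I would estimate the sums on the real axis, where $|1+\ee^{2kh}|=1+\ee^{2kh}$ exactly, giving $|F(kh)|\le K(|kh|+d)\ee^{-\alpha|kh|}$ for $k<0$ and $|F(kh)|\le K(kh+d)\ee^{-\beta kh}$ for $k>0$. The hypothesis $n\ge1/(2\pi d\mu)$ ensures $\alpha Mh\ge\sqrt{2\pi d\mu n}\ge1$, so the summands are decreasing on the relevant range and each sum is dominated by $K\int_{Mh}^{\infty}(u+d)\ee^{-\alpha u}\dd{u}=K\ee^{-\alpha Mh}(Mh/\alpha+1/\alpha^2+d/\alpha)$ and its $\beta$-analogue. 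The choices $M=\lceil(\mu/\alpha)n\rceil$ and $N=\lceil(\mu/\beta)n\rceil$ force $\alpha Mh\ge\sqrt{2\pi d\mu n}$ and $\beta Nh\ge\sqrt{2\pi d\mu n}$, so both tails are $\OO(\ee^{-\sqrt{2\pi d\mu n}})$, while the leading terms $Mh/\alpha$ and $Nh/\beta$ are of size $\tfrac1{\alpha^2}\sqrt{2\pi d\mu n}$ and $\tfrac1{\beta^2}\sqrt{2\pi d\mu n}$ — this is exactly where the factor $\sqrt{n}$ in the final bound originates. Collecting these via $\alpha,\beta\ge\mu$ produces the $\sqrt{2\pi d\mu}$ contribution, the lower-order remainders are absorbed into the ``$+1$'', and factoring out $2K/\mu^2$ assembles the stated constant $C$. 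I expect the main obstacle to be the pointwise strip bound, and specifically the lower estimate $|1+\ee^{2\zeta}|\ge(1+\ee^{2x})\cos d$ obtained from the $\cosh$ factorization: it is precisely this estimate that lets the logarithmic singularity be tracked at its true rate and is the source of the sharper constant, whereas the remaining work is the careful but routine bookkeeping that turns the three error pieces into explicit, $n$-independent constants.
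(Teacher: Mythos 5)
Your proposal follows essentially the same route as the paper: the same discretization/truncation split, the same pointwise strip bound on $F(\zeta)=f(\ee^{\zeta})\ee^{\zeta}$ (your lower bound $|1+\ee^{2\zeta}|\geq(1+\ee^{2x})\cos d$ via $1+\ee^{2\zeta}=2\ee^{\zeta}\cosh\zeta$ is exactly what the paper gets by applying its Lemma~\ref{lem:bound-SE-1p-exp} with doubled arguments), the same classical trapezoidal estimate for $\mathcal{N}(F,d)$, and the same monotonicity-plus-integral treatment of the tails, with the $\sqrt{n}$ correctly traced to the $nh$ term in the truncation error. The one slip is in the truncation step: on the real axis $|\log\psi_2(kh)|=|kh|$ exactly, so the bound should be $K|kh|\ee^{-\alpha|kh|}$ rather than $K(|kh|+d)\ee^{-\alpha|kh|}$; carrying the spurious $+d$ leaves an extra $\mu d/\sqrt{n}$ that cannot be absorbed into the ``$+1$'' for small $n$, and would yield a constant slightly larger than the stated $C$. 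Dropping it recovers the paper's constant exactly.
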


\begin{theorem}
\label{thm:new-DE2}
Let $K$, $\alpha$, $\beta$ and $d$ be positive constants with $d<\pi/2$.
Assume that $f$ is analytic on $\phi_2(\domD_d)$,
and satisfies~\eqref{eq:f-bound-log-algebraic-2} for all $z\in\phi_2(\domD_d)$.
Let $\mu=\min\{\alpha,\beta\}$, let $n$ be a positive integer,
let $h$ be selected by
\begin{equation}
 h = \frac{\arsinh(4dn/\mu)}{n}, \label{eq:improve-h-DE2}
\end{equation}
and let $M$ and $N$ be selected by
\begin{equation}
 M=\left\lceil\frac{1}{h}\arsinh\left(\frac{\mu}{\alpha}q\left(\frac{4dn}{\mu}\right)\right)\right\rceil,\quad
 N= \left\lceil\frac{1}{h}\arsinh\left(\frac{\mu}{\beta}q\left(\frac{4dn}{\mu}\right)\right)\right\rceil,
\label{eq:def-MN-DE2}
\end{equation}
where $q(x)=x/\arsinh x$.
Let $n$ be taken sufficiently large
so that $n\geq \mu\sinh(1)/(4d)$ and $h\leq \pi d$ are satisfied.
Then, it holds that
\[
\left\lvert
\int_0^{\infty} f(t)\dd{t}
- h \sum_{k=-M}^N f(\phi_2(kh))\phi_2'(kh)
\right\rvert
\leq C n\ee^{-2\pi d n/\arsinh(4 d n/\mu)},
\]
where $C$ is a constant independent of $n$, expressed as
\[
 C
=\frac{2K}{\mu^2}
\left[
\frac{(2 + \pi \mu \cos d)c_d^{(\alpha+\beta)/2}}{(1 - \ee^{-\pi\mu q(4d/\mu)/2})\cos^2 d}
+ 2\pi d + 1
\right],
\]
where $c_d = 1/\cos((\pi/2)\sin d)$.
\end{theorem}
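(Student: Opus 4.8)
The plan is to mirror the proof of Theorem~\ref{thm:new-DE1}, replacing the finite-interval map $\phi_1$ by $\phi_2(x)=\ee^{(\pi/2)\sinh x}$ and re-deriving every estimate for the semi-infinite setting, in the same way that Theorem~\ref{thm:new-SE2} is the semi-infinite counterpart of Theorem~\ref{thm:new-SE1}. Setting $F(x)=f(\phi_2(x))\phi_2'(x)$, the substitution $t=\phi_2(x)$ (which maps $\mathbb{R}$ bijectively onto $(0,\infty)$) recasts the error as $\lvert\int_{-\infty}^{\infty}F(x)\dd{x}-h\sum_{k=-M}^{N}F(kh)\rvert$. First I would split this into the discretization error $\lvert\int_{-\infty}^{\infty}F(x)\dd{x}-h\sum_{k=-\infty}^{\infty}F(kh)\rvert$ and the two truncation errors $h\sum_{k<-M}\lvert F(kh)\rvert$ and $h\sum_{k>N}\lvert F(kh)\rvert$. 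The discretization error is then controlled by the standard trapezoidal-rule estimate for functions analytic on $\domD_d$, which bounds it by a multiple of $\int_{-\infty}^{\infty}(\lvert F(x+\ii d)\rvert+\lvert F(x-\ii d)\rvert)\dd{x}$ together with the factor $\ee^{-2\pi d/h}/(1-\ee^{-2\pi d/h})$.

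The heart of the argument is the estimation of $\lvert F(x+\ii y)\rvert$ for $\lvert y\rvert\le d$, obtained by bounding each factor of $F$ separately. The map itself is explicit: $\lvert\phi_2(x+\ii y)\rvert=\ee^{(\pi/2)\sinh x\cos y}$ and $\lvert\phi_2'(x+\ii y)\rvert=(\pi/2)\lvert\cosh(x+\ii y)\rvert\,\ee^{(\pi/2)\sinh x\cos y}$ handle the Jacobian and the $\lvert z\rvert^{\alpha-1}$ factor, while the logarithmic factor is treated \emph{without} inflating its growth rate, using the clean identity $\lvert\log\phi_2(x+\ii y)\rvert=(\pi/2)\lvert\sinh(x+\ii y)\rvert$ (the analytic continuation of $\log\phi_2$ from the real axis, i.e.\ the semi-infinite analogue of Lemma~\ref{lem:bound-DE1-log}). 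Assembling these together with the bound for the algebraic-decay factor produces an integrable majorant of $\lvert F(x\pm\ii d)\rvert$ whose integral over $\mathbb{R}$ can be evaluated in closed form, yielding the bracketed constant in $C$.

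For the truncation errors I would use the double-exponential decay of $F$ on the real line: as $x\to-\infty$ the factor $\ee^{(\pi/2)\sinh x}$ forces decay governed by the origin exponent $\alpha$, whereas as $x\to+\infty$ the algebraic-decay denominator forces decay governed by $\beta$. Bounding each tail sum by comparison with the corresponding integral and invoking the choices~\eqref{eq:improve-h-DE2} and~\eqref{eq:def-MN-DE2}---in which $\arsinh$ and $q(x)=x/\arsinh x$ are precisely what invert the $\sinh$ appearing in these exponents and thereby balance the two tails against the discretization error---renders all three contributions of the same order. Finally, substituting $h=\arsinh(4dn/\mu)/n$ into $\ee^{-2\pi d/h}$ gives the stated rate $n\,\ee^{-2\pi d n/\arsinh(4dn/\mu)}$, and collecting the explicit constants (including the powers of $c_d$ and $\cos d$) assembles $C$.

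The step I expect to be the main obstacle is the lower bound on $\lvert 1+\phi_2(x+\ii y)^2\rvert$ needed for the algebraic-decay factor $\lvert 1+\phi_2(x+\ii y)^2\rvert^{-(\alpha+\beta)/2}$. Although $d<\pi/2$ guarantees $\cos y>0$ and hence that $1+\phi_2(x+\ii y)^2$ never vanishes on the strip, the quantity $\phi_2(x+\ii y)^2=\ee^{\pi\sinh(x+\ii y)}$ grows double-exponentially and its argument $\pi\cosh x\sin y$ sweeps repeatedly past odd multiples of $\pi$, so $\lvert 1+\phi_2(x+\ii y)^2\rvert$ can be small and in fact tends to zero as $d\to\pi/2^-$; at $x=0,\ y=d$ one computes $\lvert 1+\phi_2(\ii d)^2\rvert=2\cos((\pi/2)\sin d)=2/c_d$, which is where $c_d$ enters. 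Producing a uniform, computable lower bound that survives multiplication by the other (double-exponentially growing) factors and still leaves an integrable majorant, while keeping every constant explicit so that $C$ is genuinely computable, is the delicate part of the analysis.
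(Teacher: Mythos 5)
Your plan follows the paper's proof essentially step for step: the same discretization/truncation split, the classical trapezoidal bound on the strip, the exact identity $\log\phi_2(\zeta)=(\pi/2)\sinh\zeta$ for the logarithmic factor, integral comparison for the tails via the monotonicity of $\sinh(x)\cosh(x)\ee^{\mp(\pi/2)\gamma\sinh x}$, and the same role for $\arsinh$ and $q$ in balancing the three contributions. The one step you flag as unresolved --- a uniform computable lower bound on $\lvert 1+\phi_2(x+\ii y)^2\rvert$ --- is not actually a new obstacle: since $\phi_2(\zeta)^2=\ee^{\pi\sinh\zeta}$, the rewriting $\lvert z\rvert^{\alpha}/\lvert 1+z^{2}\rvert^{(\alpha+\beta)/2}=1/\bigl(\lvert 1+z^{-2}\rvert^{\alpha/2}\lvert 1+z^{2}\rvert^{\beta/2}\bigr)$ reduces it to the factors $\lvert 1+\ee^{\pm\pi\sinh\zeta}\rvert^{-1}$, which are exactly what Lemma~\ref{lem:bound-DE-1p-exp} controls (the same lemma already invoked in the proof of Theorem~\ref{thm:new-DE1} that you propose to mirror); this is simultaneously where $c_d$ enters and where the two-sided decay with exponents $\alpha$ (as $x\to-\infty$) and $\beta$ (as $x\to+\infty$) comes from. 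With that observation supplied, your argument coincides with the paper's Lemmas~\ref{lem:DE2-discretization} and~\ref{lem:DE2-truncation}.
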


Next, we consider the case of exponential decay.

\begin{theorem}
\label{thm:new-SE3}
Let $K$, $\alpha$, $\beta$ and $d$ be positive constants with
$\alpha\leq 1$ and $d<\pi$.
Assume that $f$ is analytic on $\psi_3(\domD_d)$,
and satisfies
\begin{equation}
 |f(z)|\leq K
\left|\frac{z}{1+z}\right|^{\alpha-1}|\ee^{-z}|^{\beta}|\log z|
\label{eq:f-bound-log-algebraic-3}
\end{equation}
for all $z\in\psi_3(\domD_d)$.
Let $\mu=\min\{\alpha,\beta\}$,
let $n$ be a positive integer,
let $h$ be selected by~\eqref{eq:standard-h-SE},
and let $M$ and $N$ be selected by~\eqref{eq:def-MN-SE}.
Let $n$ be taken sufficiently large
so that $n\geq 1/(2\pi d \mu)$ is satisfied.
Then, it holds that
\[
\left\lvert
\int_0^{\infty} f(t)\dd{t}
- h \sum_{k=-M}^N f(\psi_3(kh))\psi_3'(kh)
\right\rvert
\leq C \sqrt{n} \ee^{-\sqrt{2\pi d \mu n}},
\]
where $C$ is a constant independent of $n$, expressed as
\begin{align*}
 C
&=\frac{2K}{\mu^2}
\Biggl[
\frac{2\tilde{L}_d^{1-\alpha}\tilde{c}^{\alpha+\beta}_d\left\{(1+\tilde{c}_d)(1+\mu d)-\mu\log(\log 2)\log(2+\tilde{c}_d)\right\}}{(1 - \ee^{-\sqrt{2\pi d \mu}})\log(2+\tilde{c}_d)}
\Biggr.\\
&\Biggl. \quad\quad\quad\quad + \ee^{\pi(1-\alpha)/12}\left\{
\sqrt{2\pi d \mu} + 1 - \mu \log(\log 2)
\right\}
\Biggr],
\end{align*}
where $\tilde{c}_d=1/\cos(d/2)$ and
\begin{equation}
 \tilde{L}_d
 = \frac{1 + \log(2 + \tilde{c}_d)}{\log(2 + \tilde{c}_d)}(1 + \tilde{c}_d).
\label{eq:tilde-L-d}
\end{equation}
\end{theorem}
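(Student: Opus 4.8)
The plan is to run the standard two-step trapezoidal analysis after the SE transformation, but to keep the logarithmic factor intact throughout rather than dominating it by a power as in~\eqref{eq:log-estimate-by-power}. Set $F(x)=f(\psi_3(x))\psi_3'(x)$; since $\psi_3$ is analytic on $\domD_d$ for $d<\pi$ and maps $\mathbb{R}$ bijectively onto $(0,\infty)$ with $\psi_3(-\infty)=0$ and $\psi_3(+\infty)=\infty$, the change of variables gives $\int_0^\infty f(t)\dd{t}=\int_{-\infty}^\infty F(x)\dd{x}$, and I would split the error as
\[
\abs*{\int_0^\infty f(t)\dd{t}-h\sum_{k=-M}^N F(kh)}
\le E_{\mathrm{D}}+E_{\mathrm{L}}+E_{\mathrm{R}},
\]
where $E_{\mathrm{D}}=\abs*{\int_{-\infty}^\infty F\dd{x}-h\sum_{k=-\infty}^\infty F(kh)}$ is the discretization error and $E_{\mathrm{L}}=h\sum_{k\le-M-1}\abs{F(kh)}$, $E_{\mathrm{R}}=h\sum_{k\ge N+1}\abs{F(kh)}$ are the two truncation errors. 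Each is then estimated separately and balanced through the prescribed $h$, $M$, $N$.

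For $E_{\mathrm{D}}$ I would invoke the standard estimate for the trapezoidal rule over $\mathbb{R}$ for functions analytic in a strip: if $F$ is analytic on $\domD_d$ and the boundary norm $\mathcal{N}_1(F,d)=\int_{-\infty}^\infty(\abs{F(x+\ii d)}+\abs{F(x-\ii d)})\dd{x}$ is finite, then $E_{\mathrm{D}}\le \mathcal{N}_1(F,d)\,\ee^{-2\pi d/h}/(1-\ee^{-2\pi d/h})$. Analyticity of $F$ is inherited from that of $f$ on $\psi_3(\domD_d)$ and of $\psi_3$, so the real work is to bound $\mathcal{N}_1(F,d)$ explicitly from~\eqref{eq:f-bound-log-algebraic-3}. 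This needs four $\psi_3$-specific estimates on $\zeta=x+\ii y$ with $\abs{y}\le d$: (i) the exact identity $\abs{\ee^{-\psi_3(\zeta)}}^\beta=\abs{1+\ee^\zeta}^{-\beta}$; (ii) $\abs{\psi_3'(\zeta)}=\abs{\ee^\zeta}/\abs{1+\ee^\zeta}$, controlled through $\tilde{c}_d=1/\cos(d/2)$; (iii) the algebraic factor $\abs{\psi_3(\zeta)/(1+\psi_3(\zeta))}^{\alpha-1}$, where the hypothesis $\alpha\le1$ makes the exponent nonpositive so that a uniform \emph{lower} bound on $\abs{\psi_3(\zeta)/(1+\psi_3(\zeta))}$ suffices, producing the constants $\tilde{L}_d^{1-\alpha}$ and $\ee^{\pi(1-\alpha)/12}$; and (iv) the logarithmic factor $\abs{\log\psi_3(\zeta)}$, estimated in the spirit of Lemma~\ref{lem:bound-SE1-log} so that its genuine growth—linear in $\abs{x}$ as $x\to-\infty$, only logarithmic as $x\to+\infty$, with an additive reference constant of size $\abs{\log(\log 2)}$ at $\zeta=0$—is preserved. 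Integrating the resulting pointwise product over $x$ yields a finite, $n$-independent $\mathcal{N}_1(F,d)$ (the $x<0$ part giving $\int_{-\infty}^0\abs{x}\ee^{\alpha x}\dd{x}$-type integrals, the $x>0$ part logarithmically weighted exponentials). Finally, since $2\pi d/h=\sqrt{2\pi d\mu n}\ge\sqrt{2\pi d\mu}$ for $n\ge1$, I would replace the denominator by $1-\ee^{-\sqrt{2\pi d\mu}}$ and use $\ee^{-\sqrt{2\pi d\mu n}}\le\sqrt{n}\,\ee^{-\sqrt{2\pi d\mu n}}$ to match the target form.

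For the truncation errors I would use the real-axis decay of $F$. As $x\to-\infty$, $\psi_3(x)\sim\ee^x\to0$ forces $\abs{F(x)}\lesssim\abs{x}\ee^{\alpha x}$, where the factor $\abs{x}$ is exactly the undiluted logarithmic singularity; comparing the tail sum with $\int_{Mh}^\infty s\,\ee^{-\alpha s}\dd{s}$ yields $E_{\mathrm{L}}\lesssim\alpha^{-2}(\alpha Mh+1)\ee^{-\alpha Mh}$. The choice $M=\lceil(\mu/\alpha)n\rceil$ with $h=\sqrt{2\pi d/(\mu n)}$ makes $\alpha Mh\ge\sqrt{2\pi d\mu n}$, so $E_{\mathrm{L}}\lesssim\alpha^{-2}(\sqrt{2\pi d\mu n}+1)\ee^{-\sqrt{2\pi d\mu n}}$; the factor $\sqrt{2\pi d\mu n}=\sqrt{2\pi d\mu}\sqrt{n}$ is precisely the origin of the $\sqrt{n}$ in the final bound, arising from the linear growth of the logarithmic singularity near $0$. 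As $x\to+\infty$, instead $\abs{F(x)}\lesssim(\log x)\ee^{-\beta x}$, so the analogous comparison with $N=\lceil(\mu/\beta)n\rceil$ gives $\beta Nh\ge\sqrt{2\pi d\mu n}$ and $E_{\mathrm{R}}\lesssim(\log(Nh)+\text{const})\ee^{-\sqrt{2\pi d\mu n}}=\OO(\sqrt{n}\,\ee^{-\sqrt{2\pi d\mu n}})$, dominated by $E_{\mathrm{L}}$. The condition $n\ge1/(2\pi d\mu)$ is what guarantees $Mh$ and $Nh$ are large enough for the integral comparisons to be valid.

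It then remains to add the three bounds, enlarge denominators by replacing $\alpha,\beta$ with $\mu=\min\{\alpha,\beta\}$ where convenient (e.g.\ $\alpha^{-2},\beta^{-2}\le\mu^{-2}$), and collect the explicit constants into the stated $C$. The main obstacle is steps (iii)--(iv) of the discretization estimate: obtaining sharp, fully explicit bounds for $\abs{\log\psi_3(x+\ii y)}$ and for $\abs{\psi_3(\zeta)/(1+\psi_3(\zeta))}^{\alpha-1}$ \emph{uniformly} on the closed strip $\abs{\Im\zeta}\le d$. Because $\psi_3(\zeta)=\log(1+\ee^\zeta)$ is a composition whose image approaches $0$ as $\Re\zeta\to-\infty$ and grows as $\Re\zeta\to+\infty$, one must simultaneously track the location of $\psi_3(\zeta)$ relative to $0$—for both the outer logarithm and the algebraic factor—while keeping every constant computable. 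This book-keeping, rather than any single inequality, is what makes the direct analysis laborious, and it is exactly where the refinement over~\eqref{eq:log-estimate-by-power} is realized.
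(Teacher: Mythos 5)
Your overall route is exactly the paper's: split into discretization plus truncation errors, bound the discretization error via the classical strip estimate for the trapezoidal rule applied to $F=f(\psi_3(\cdot))\psi_3'$, bound $|F|$ on the strip and on $\mathbb{R}$ while keeping the logarithmic factor intact, and compare the tail sums with integrals using the monotonicity guaranteed by $Mh\geq 1/\alpha$, $Nh\geq 1/\beta$ (which is where $n\geq 1/(2\pi d\mu)$ enters). Your identification of the origin of the $\sqrt{n}$ factor and of the roles of $M=\lceil(\mu/\alpha)n\rceil$, $N=\lceil(\mu/\beta)n\rceil$ also matches the paper.

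There is, however, one step in your plan that fails as stated: item (iii). You claim that since $\alpha\leq 1$ makes the exponent of $|\psi_3(\zeta)/(1+\psi_3(\zeta))|^{\alpha-1}$ nonpositive, ``a uniform lower bound on $|\psi_3(\zeta)/(1+\psi_3(\zeta))|$ suffices.'' No such uniform positive lower bound exists on $\overline{\domD_d}$: as $\Re\zeta\to-\infty$ one has $\psi_3(\zeta)=\log(1+\ee^{\zeta})\to 0$, hence $|\psi_3(\zeta)/(1+\psi_3(\zeta))|\to 0$ and the algebraic factor blows up like $|1+\ee^{-\zeta}|^{1-\alpha}$ whenever $\alpha<1$. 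Treating this factor separately from your item (ii) therefore cannot work. The paper's Lemma~\ref{lem:bound-SE3-1p-log} bounds precisely the \emph{product}
\[
\left|\frac{1+\log(1+\ee^{\zeta})}{\log(1+\ee^{\zeta})}\cdot\frac{1}{1+\ee^{-\zeta}}\right|\leq \tilde{L}_d ,
\]
i.e., the blow-up of the algebraic factor is cancelled against the decay of the Jacobian factor $\psi_3'(\zeta)=1/(1+\ee^{-\zeta})$; raising this to the power $1-\alpha$ and recombining with the remaining $|1+\ee^{-\zeta}|^{-1}$ yields the decisive bound $\tilde{L}_d^{1-\alpha}|1+\ee^{-\zeta}|^{-\alpha}$, whose $\ee^{\alpha x}$ decay as $x\to-\infty$ is exactly what the truncation step with $M=\lceil(\mu/\alpha)n\rceil$ relies on (the analogue on the real line gives $\ee^{\pi(1-\alpha)/12}$). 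Since you name these constants you are evidently aware of the relevant lemma, but the justification you give for producing them is based on a false premise and needs to be replaced by this joint estimate. The rest of your outline (the identity $|\ee^{-\psi_3(\zeta)}|^{\beta}=|1+\ee^{\zeta}|^{-\beta}$, the bound on $|\log\log(1+\ee^{\zeta})|$ growing linearly in $|\zeta|$ with reference constant $-\log(\log 2)$, and the tail comparisons) is sound and coincides with Lemmas~\ref{lem:bound-SE3-log}, \ref{lem:SE3-discretization} and~\ref{lem:SE3-truncation}.
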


\begin{theorem}
\label{thm:new-DE3}
Let $K$, $\alpha$, $\beta$ and $d$ be positive constants
with $\alpha\leq 1$ and $d<\pi/2$.
Assume that $f$ is analytic on $\phi_3(\domD_d)$,
and satisfies~\eqref{eq:f-bound-log-algebraic-2} for all $z\in\phi_3(\domD_d)$.
Let $\mu=\min\{\alpha,\beta\}$, let $n$ be a positive integer,
let $h$ be selected by~\eqref{eq:improve-h-DE},
and let $M$ and $N$ be selected by~\eqref{eq:def-MN-DE},
where $q(x)=x/\arsinh x$.
Let $n$ be taken sufficiently large
so that $n\geq \mu\sinh(1)/(2d)$ and $h\leq \pi d$ are satisfied.
Then, it holds that
\[
\left\lvert
\int_0^{\infty} f(t)\dd{t}
- h \sum_{k=-M}^N f(\phi_3(kh))\phi_3'(kh)
\right\rvert
\leq C n\ee^{-2\pi d n/\arsinh(2 d n/\mu)},
\]
where $C$ is a constant independent of $n$, expressed as
\begin{align*}
 C
&=\frac{2K}{\mu^2}
\Biggl[
\frac{2 L_d^{1-\alpha}c_d^{\alpha+\beta}\left\{
(1+c_d)(1+d)(1+\pi\mu\cos d) - \mu\log(\log 2)\log(2+c_d)\cos d
\right\}}{(1 - \ee^{-\pi\mu q(2d/\mu)})\log(2+c_d)\cos^2 d}
\Biggr.\\
& \quad\quad\quad\quad
+ \ee^{\pi(1-\alpha)/12}\left\{2\pi d + 1 - \mu\log(\log 2)\right\}
\Biggr],
\end{align*}
where $c_d = 1/\cos((\pi/2)\sin d)$ and
\begin{equation}
 L_d
 = \frac{1 + \log(2 + {c}_d)}{\log(2 + {c}_d)}(1 + {c}_d).
\label{eq:L-d}
\end{equation}
\end{theorem}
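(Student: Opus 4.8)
The plan is to follow the two-step decomposition standard for transformed trapezoidal rules, treating Theorem~\ref{thm:new-DE3} as the exponential-decay, double-exponential analogue of Theorem~\ref{thm:new-DE1}. Set $F(\zeta)=f(\phi_3(\zeta))\phi_3'(\zeta)$; then the quantity to bound is $\lvert\int_{-\infty}^{\infty}F(x)\dd{x}-h\sum_{k=-M}^{N}F(kh)\rvert$. By the triangle inequality I would split this into the discretization error $\lvert\int_{-\infty}^{\infty}F(x)\dd{x}-h\sum_{k=-\infty}^{\infty}F(kh)\rvert$ and the truncation error $h\sum_{k<-M}\lvert F(kh)\rvert+h\sum_{k>N}\lvert F(kh)\rvert$. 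Throughout, the pointwise hypothesis on $f$ that I would use is the exponential-decay bound natural to $\phi_3$ and matching Theorem~\ref{thm:new-SE3}, namely $\lvert f(z)\rvert\le K\lvert z/(1+z)\rvert^{\alpha-1}\lvert\ee^{-z}\rvert^{\beta}\lvert\log z\rvert$ as in~\eqref{eq:f-bound-log-algebraic-3}. After checking that $F$ is analytic on $\domD_d$ with convergent boundary integrals, the two errors can be bounded separately.

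For the discretization error I would apply the classical trapezoidal estimate over $\domD_d$: it is at most $\frac{1}{\ee^{2\pi d/h}-1}\int_{-\infty}^{\infty}(\lvert F(x+\ii d)\rvert+\lvert F(x-\ii d)\rvert)\dd{x}$, so the factor $\ee^{-2\pi d/h}$ is produced automatically. The real work is to bound the boundary integral in closed form. Substituting $z=\phi_3(x\pm\ii d)$ into the hypothesis and multiplying by $\lvert\phi_3'(x\pm\ii d)\rvert$, I would use supporting estimates for $\lvert\phi_3\rvert$, $\Re\phi_3$, $\lvert\phi_3'\rvert$, and $\lvert\phi_3/(1+\phi_3)\rvert$ on the lines $\Im\zeta=\pm d$, together with an estimate for $\lvert\log\phi_3(x\pm\ii d)\rvert$ analogous to Lemma~\ref{lem:bound-DE1-log} that preserves the genuine logarithmic divergence rate rather than inflating it to an algebraic one. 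These yield the quantities $c_d$, $L_d$, $\log(2+c_d)$, and the angular factor $\ee^{\pi(1-\alpha)/12}$ appearing in $C$, and reduce the boundary integral to an explicitly integrable expression.

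For the truncation error I would exploit the double-exponential decay of $F$ on the real axis. As $x\to-\infty$ one has $\phi_3(x)\to0$, so the origin factor $\lvert z/(1+z)\rvert^{\alpha-1}$ combined with $\phi_3'$ gives decay $\ee^{-\alpha(\pi/2)\ee^{\lvert x\rvert}}$ up to polynomial factors; as $x\to+\infty$ one has $\phi_3(x)\to\infty$, and $\lvert\ee^{-z}\rvert^{\beta}$ gives decay $\ee^{-\beta(\pi/2)\ee^{x}}$. Bounding each tail sum by the corresponding monotone integral and noting that the nearest retained indices $k=-M$ and $k=N$ dominate, the definitions~\eqref{eq:def-MN-DE} through $q(x)=x/\arsinh x$ are exactly calibrated so that $\alpha(\pi/2)\ee^{Mh}\approx\pi\mu q(2dn/\mu)=2\pi dn/\arsinh(2dn/\mu)$ and likewise for $N$; hence both tails carry the same exponential rate as the discretization error, the ratios $\mu/\alpha$ and $\mu/\beta$ compensating for whichever of the two decay rates is slower.

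Finally I would add the two bounds, substitute $h=\arsinh(2dn/\mu)/n$ from~\eqref{eq:improve-h-DE} so that $2\pi d/h=2\pi dn/\arsinh(2dn/\mu)$, and collect all constants into the stated $C$. The polynomial prefactor $n$ in the bound $Cn\ee^{-2\pi dn/\arsinh(2dn/\mu)}$ originates from the truncation step: the logarithmic factor $\lvert\log\phi_3(kh)\rvert$ near $k=-M$ and $k=N$ has magnitude of order $n/\arsinh(2dn/\mu)$, which is not absorbed by the clean exponential. The main obstacle I anticipate is the boundary-integral estimate—controlling $\lvert\log\phi_3(x\pm\ii d)\rvert$ jointly with the near-origin factor $\lvert\phi_3/(1+\phi_3)\rvert^{\alpha-1}$ and the decay factor $\lvert\ee^{-\phi_3}\rvert^{\beta}$ uniformly across the strip, where $\phi_3=\log(1+\ee^{\pi\sinh\zeta})$ makes both the $z\to0$ and $z\to\infty$ regimes delicate—while keeping every constant explicit. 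A secondary technical point is verifying that the $q$-calibrated $M$ and $N$ make neither tail exceed the discretization error.
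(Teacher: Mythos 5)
Your plan matches the paper's proof essentially step for step: the same discretization/truncation split, the classical trapezoidal bound on $\domD_d$, strip estimates for $\lvert\log\phi_3\rvert$ that preserve the logarithmic divergence rate (the paper's Lemmas~\ref{lem:bound-DE3-1p-log}--\ref{lem:bound-DE3-log}), tail bounds via monotone-integral comparison (Proposition~\ref{prop:DE1-monotone}) with the $q$-calibrated $M,N$ giving $\alpha\sinh(Mh)\geq\mu q(2dn/\mu)$ and $\beta\sinh(Nh)\geq\mu q(2dn/\mu)$, and the $n$ prefactor indeed arising from the $\lvert\log\phi_3\rvert$ factor at the truncation boundary; you also correctly use hypothesis~\eqref{eq:f-bound-log-algebraic-3}, which is what the paper's Lemmas~\ref{lem:DE3-discretization} and~\ref{lem:DE3-truncation} actually assume (the theorem's citation of~\eqref{eq:f-bound-log-algebraic-2} is evidently a typo). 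One minor attribution slip: the factor $\ee^{\pi(1-\alpha)/12}$ comes from the real-line estimate in Lemma~\ref{lem:bound-DE3-1p-log} used in the truncation step, not from the boundary integral, but this does not affect the argument.
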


\section{Numerical experiments}
\label{sec3}

In this section, we present numerical results
for integrals with logarithmic singularity.
We implemented the programs in C
with double-precision arithmetic.

We first consider the following integral over the finite interval,
where algebraic singularity does not exist.

\begin{example}
\label{ex1}
Consider the following integral
\[
 \int_0^1 \frac{\log t}{1+t}\dd{t} = - \frac{\pi^2}{12}.
\]
\end{example}

In this case,
the assumptions of Theorem~\ref{thm:exist-SE}
are fulfilled with $K=1 + \ee$ and $d=3$,
and those of Theorem~\ref{thm:exist-DE}
are fulfilled with $K=3\sqrt{2}$ and $d=\pi/3$.
Furthermore,
the assumptions of Theorem~\ref{thm:new-SE1}
are fulfilled with $K=1 + \ee$, $\alpha=\beta=1$ and $d=3$,
and those of Theorem~\ref{thm:new-DE1}
are fulfilled with $K=3\sqrt{2}$, $\alpha=\beta=1$ and $d=\pi/3$.
The results are shown in Fig.~\ref{fig:1}.
In the figure,
the error bounds given in Theorems~\ref{thm:exist-SE}--\ref{thm:new-DE1}
enclose corresponding observed errors.
The convergence profile
of the SE formula of Theorem~\ref{thm:new-SE1}
is almost identical to that of Theorem~\ref{thm:exist-SE}.
In contrast,
the convergence profile
of the DE formula of Theorem~\ref{thm:new-DE1}
is superior to that of Theorem~\ref{thm:exist-DE}.
The primary reason for this improvement may be due to employment of
the optimal selection formulas of $h$, $M$ and $N$~\cite{OkaKawai}.

The next example is also an integral over the finite interval,
but algebraic singularity exists at the origin.
For this reason, we cannot use Theorems~\ref{thm:exist-SE}
and~\ref{thm:exist-DE} for this example.

\begin{example}
\label{ex2}
Consider the following integral
\[
 \int_0^1 \frac{\log t}{\sqrt{t}(1+t)}\dd{t} = - 4\mathrm{G},
\]
where $\mathrm{G}$ is Catalan's constant.
\end{example}

In this case,
the assumptions of Theorem~\ref{thm:new-SE1}
are fulfilled with $K=1 + \ee$, $\alpha=1/2$, $\beta=1$ and $d=3$,
and those of Theorem~\ref{thm:new-DE1}
are fulfilled with $K=3\sqrt{2}$, $\alpha=1/2$, $\beta=1$ and $d=\pi/3$.
The results are shown in Fig.~\ref{fig:2}.
In this figure as well,
the error bounds given in Theorems~\ref{thm:new-SE1} and~\ref{thm:new-DE1}
enclose corresponding observed errors.

The next example is an integral over the semi-infinite interval,
where the integrand decays polynomially.

\begin{example}
\label{ex3}
Consider the following integral
\[
\int_0^{\infty} \frac{\log t}{t^{1/3}(1+t^2)}\dd{t}
 = - \frac{\pi^2}{6}.
\]
\end{example}

In this case,
the assumptions of both Theorems~\ref{thm:new-SE2}
and~\ref{thm:new-DE2}
are fulfilled with $K=1$, $\alpha=2/3$, $\beta=4/3$ and $d=3/2$.
The results are shown in Fig.~\ref{fig:3}.
In this figure as well,
the error bounds given in Theorems~\ref{thm:new-SE2} and~\ref{thm:new-DE2}
enclose corresponding observed errors.

The final example is the integral over the semi-infinite interval,
where the integrand decays exponentially.

\begin{example}
\label{ex4}
Consider the following integral
\[
\int_0^{\infty} \frac{\ee^{-t}\log t}{\sqrt{t}}\dd{t}
 = - \sqrt{\pi}(\gamma + 2\log 2),
\]
where $\gamma$ is Euler's constant.
\end{example}

In this case,
the assumptions of Theorem~\ref{thm:new-SE3}
are fulfilled with $K=2\pi/3$, $\alpha=1/2$, $\beta=1$ and $d=3$,
and those of Theorem~\ref{thm:new-DE3}
are fulfilled with $K=2\pi/3$, $\alpha=1/2$, $\beta=1$ and $d=3/2$.
The results are shown in Fig.~\ref{fig:4}.
In this figure as well,
the error bounds given in Theorems~\ref{thm:new-SE2} and~\ref{thm:new-DE2}
enclose corresponding observed errors.

\begin{figure}[htbp]
\centering
\includegraphics[scale=.75]{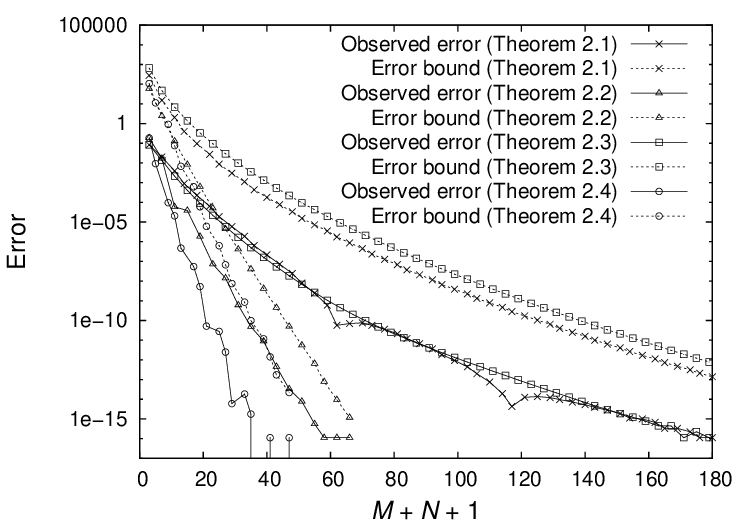}
\caption{Observed error and error bound for the integral of Example~\ref{ex1}.}
\label{fig:1}
\end{figure}
\begin{figure}[htbp]
\centering
\includegraphics[scale=.75]{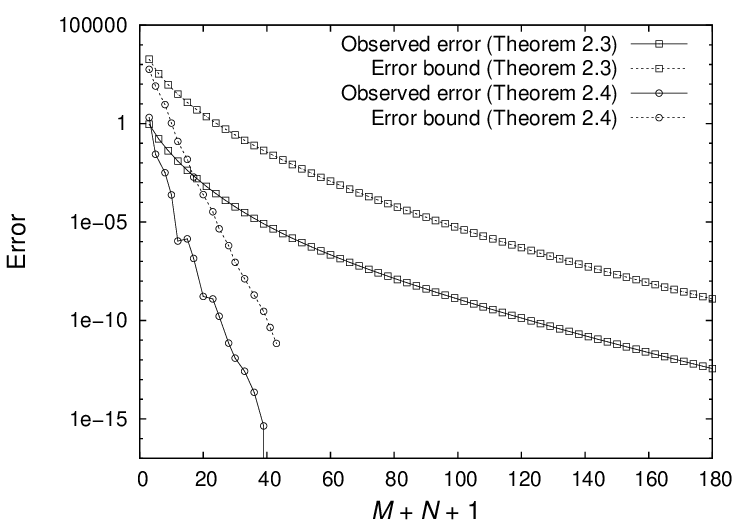}
\caption{Observed error and error bound for the integral of Example~\ref{ex2}.}
\label{fig:2}
\end{figure}
\begin{figure}[htbp]
\centering
\includegraphics[scale=.75]{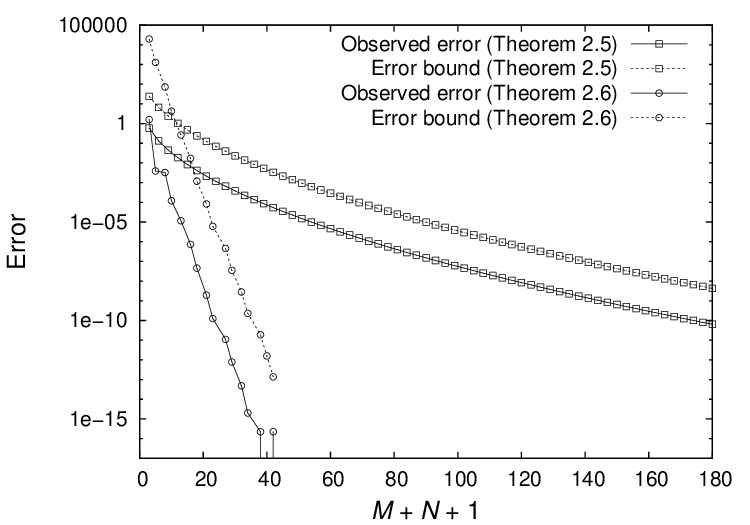}
\caption{Observed error and error bound for the integral of Example~\ref{ex3}.}
\label{fig:3}
\end{figure}
\begin{figure}[htbp]
\centering
\includegraphics[scale=.75]{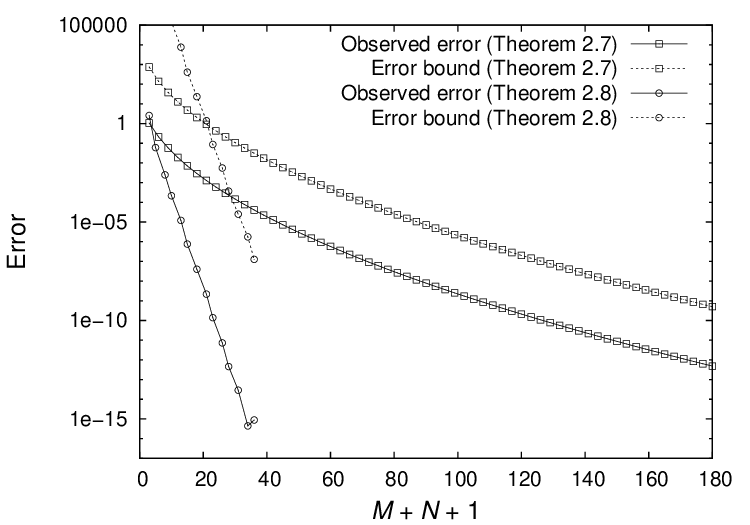}
\caption{Observed error and error bound for the integral of Example~\ref{ex4}.}
\label{fig:4}
\end{figure}

\begin{remark}
The gap between the observed error and theoretical error bound
is mainly due to the terms involving $d$ in the constant $C$,
such as $1/\cos(d/2)$ (which becomes large when $d$ is close to $\pi$)
and $1/\cos((\pi/2)\sin d)$ (which becomes large when $d$ is close to $\pi/2$).
Reducing the value of $d$ can decrease this gap.
However, doing so worsens the convergence rate,
because $d$ directly affects
the exponential term of the convergence rate,
such as $\ee^{-\sqrt{2\pi d \mu n}}$
or $\ee^{-2\pi d n/\arsinh(2dn/\mu)}$.
To obtain a well-balanced error bound,
it is desirable to develop a systematic method for choosing an appropriate value of $d$.
This remains an open topic for future research.
\end{remark}

\section{Proofs}
\label{sec4}

In this section, we provide proofs
for Theorems~\ref{thm:new-SE1}--\ref{thm:new-DE3}.

\subsection{In the case of the finite interval}
\label{sec:proof-finite}

Here, we prove theorems in the case of the finite interval,
i.e., Theorems~\ref{thm:new-SE1} (for the SE formula)
and~\ref{thm:new-DE1} (for the DE formula).

\subsubsection{Proof of Theorem~\ref{thm:new-SE1}}

We begin by analyzing the error of the SE formula.
Applying the SE transformation $t=\psi_1(x)$ and
putting $F(x)=f(\psi_1(x))\psi_1'(x)$,
we rewrite the error of the SE formula as
\[
\left|
 \int_{0}^T f(t)\dd{t} - h\sum_{k=-M}^N f(\psi_1(kh))\psi_1'(kh)
\right|
=
\left|
 \int_{-\infty}^{\infty}F(x)\dd{x} - h\sum_{k=-M}^N F(kh)
\right|.
\]
We now divide the right-hand side into two terms as
\begin{align*}
 \left|
 \int_{-\infty}^{\infty}F(x)\dd{x} - h\sum_{k=-M}^N F(kh)
\right|
&\leq\left|
\int_{-\infty}^{\infty}F(x)\dd{x} - h\sum_{k=-\infty}^{\infty} F(kh)
\right|\\
&\quad +
\left|
h\sum_{k=-\infty}^{-M-1} F(kh)
+h\sum_{k=N+1}^{\infty} F(kh)
\right|.
\end{align*}
The first and second terms are called
the discretization and truncation errors, respectively.
The following function space is important
to analyze the discretization error.

\begin{definition}
Let $d$ be a positive constant.
Then, $\mathbf{B}(\domD_d)$ denotes the family of all functions $F$
that are analytic on $\domD_d$, such that
\begin{equation}
 \lim_{x\to\pm\infty}\int_{-d}^d |F(x+\ii y)|\dd{y}=0,
\label{eq:F-imag-int-0}
\end{equation}
and such that $\mathcal{N}(F,d)<\infty$, where
\[
 \mathcal{N}(F,d)=\lim_{y\to d - 0}
\int_{-\infty}^{\infty}
\left\{|F(x + \ii y)| + |F(x - \ii y)|\right\}\dd{x}.
\]
\end{definition}

If the integrand $F$ belongs to $\mathbf{B}(\domD_d)$,
the discretization error is bounded as follows.

\begin{theorem}[classical; cf.\ Stenger~{\cite[Theorem~3.4]{stenger81:_numer}}]
Assume that $F\in\mathbf{B}(\domD_d)$. Then, it holds that
\begin{equation}
 \left|
\int_{-\infty}^{\infty}F(x)\dd{x} - h\sum_{k=-\infty}^{\infty} F(kh)
\right|
\leq \mathcal{N}(F,d)\frac{\ee^{-2\pi d/h}}{1 - \ee^{-2\pi d/h}}.
\label{eq:discretization-error}
\end{equation}
\end{theorem}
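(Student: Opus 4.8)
The plan is to reduce both the integral and the infinite trapezoidal sum to contour integrals along the two horizontal boundaries of a slightly shrunken strip $\domD_{d'}$ with $0<d'<d$, and then to let $d'\to d-0$. First I would fix $d'$ and apply the residue theorem to $F(z)\cot(\pi z/h)$ over the boundary of the rectangle $\{x+\ii y : |x|\leq X,\ |y|\leq d'\}$, traversed counterclockwise. Since $\cot(\pi z/h)$ has a simple pole of residue $h/\pi$ at each node $z=kh$, the enclosed residues reproduce $h\sum_k F(kh)$ up to the factor $1/(2\ii)$; the two vertical sides vanish as $X\to\infty$ thanks to the decay hypothesis~\eqref{eq:F-imag-int-0}, so that $h\sum_{k=-\infty}^{\infty}F(kh)$ equals $\frac{1}{2\ii}$ times the difference of the integrals of $F(z)\cot(\pi z/h)$ along $\Im z=-d'$ and $\Im z=+d'$. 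In parallel, analyticity of $F$ together with the same decay hypothesis lets me shift the real-axis integral to either horizontal line, giving $\int_{-\infty}^{\infty}F(x)\dd{x}=\frac{1}{2}\int_{-\infty}^{\infty}\{F(x+\ii d')+F(x-\ii d')\}\dd{x}$.

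Subtracting, the error $E$ becomes a single integral over the two boundary lines, in which $F(x\pm\ii d')$ is multiplied by the kernel $\frac{1}{2}\pm\frac{1}{2\ii}\cot(\pi(x\pm\ii d')/h)$ (matching signs). The crucial step is to rewrite these kernels using the identity $\cot\zeta=\ii+2\ii/(\ee^{2\ii\zeta}-1)$ (and its conjugate form): a short computation collapses the top-line kernel to $\ee^{2\ii\zeta}/(\ee^{2\ii\zeta}-1)$ and the bottom-line kernel to its mirror image, where $\zeta=\pi(x\pm\ii d')/h$. On the lines $\Im z=\pm d'$ one has $|\ee^{\pm2\ii\pi z/h}|=\ee^{-2\pi d'/h}$, so each kernel is bounded in modulus by $\ee^{-2\pi d'/h}/(1-\ee^{-2\pi d'/h})$ uniformly in $x$. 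This is precisely where the exponential factor $\ee^{-2\pi d'/h}$ and the geometric denominator $1-\ee^{-2\pi d'/h}$ are produced.

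Combining the two bounds and pulling the uniform kernel bound out of the integral gives $|E|\leq\frac{\ee^{-2\pi d'/h}}{1-\ee^{-2\pi d'/h}}\int_{-\infty}^{\infty}\{|F(x+\ii d')|+|F(x-\ii d')|\}\dd{x}$. Letting $d'\to d-0$, the integral on the right converges to $\mathcal{N}(F,d)$ by definition, and the prefactor converges to $\ee^{-2\pi d/h}/(1-\ee^{-2\pi d/h})$, which yields~\eqref{eq:discretization-error}.

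I expect the main obstacle to be the rigorous justification of the contour manipulations rather than the algebra: one must show that the vertical segments at $\Re z=\pm X$ genuinely drop out as $X\to\infty$ (this is the role of~\eqref{eq:F-imag-int-0}, but it must be invoked carefully because $\cot(\pi z/h)$ is merely bounded, not decaying, on horizontal lines), and that the termwise residue sum converges and may be identified with $h\sum_k F(kh)$. An alternative route that avoids the $\cot$ kernel is Poisson summation: writing $E=-\sum_{n\neq0}\hat{F}(2\pi n/h)$ and shifting each Fourier integral into the strip (downward for $n>0$, upward for $n<0$) gives $|\hat{F}(2\pi n/h)|\leq\ee^{-2\pi|n|d/h}\int_{-\infty}^{\infty}|F(x\mp\ii d)|\dd{x}$, after which summing the geometric series over $n\geq1$ reproduces the same bound; there the obstacle merely shifts to justifying the Poisson summation identity for $F\in\mathbf{B}(\domD_d)$.
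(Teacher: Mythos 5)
The paper does not prove this statement: it is quoted as a classical theorem with a pointer to Stenger, so there is no internal proof to compare yours against. Your argument is, in substance, the standard contour-integral proof of that classical result, and it is correct. The algebra checks out: with $\zeta=\pi z/h$ one has $\tfrac{1}{2\ii}\cot\zeta+\tfrac12=\ee^{2\ii\zeta}/(\ee^{2\ii\zeta}-1)$ and $\tfrac{1}{2\ii}\cot\zeta-\tfrac12=1/(\ee^{2\ii\zeta}-1)$, and on $\Im z=d'$ (resp.\ $\Im z=-d'$) the relevant kernel has modulus at most $\ee^{-2\pi d'/h}/(1-\ee^{-2\pi d'/h})$, so pulling it out of the integral and letting $d'\to d-0$ yields exactly $\mathcal{N}(F,d)\,\ee^{-2\pi d/h}/(1-\ee^{-2\pi d/h})$. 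The two technical points you flag are the right ones, and both close in the standard way: for the vertical sides one takes $\Re z=\pm(m+\tfrac12)h$ along integers $m\to\infty$, where $\lvert\cot(\pi z/h)\rvert=\lvert\tanh(\pi y/h)\rvert\leq 1$, so that \eqref{eq:F-imag-int-0} alone forces those contributions to vanish; and the identification of the enclosed residues with $h\sum_k F(kh)$, as well as the shift of $\int_{-\infty}^{\infty}F(x)\,\dd{x}$ to the average over the two horizontal lines, is understood as a limit of symmetric partial sums and truncated integrals, which is how the cited source formulates the exact error representation before estimating it. Your Poisson-summation alternative is likewise a legitimate route to the same bound, with the burden moved to justifying the summation formula for $F\in\mathbf{B}(\domD_d)$; either way the proposal is sound.
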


Therefore, our main task is to show $F\in\mathbf{B}(\domD_d)$.
To this end, the following bounds are required.

\begin{lemma}[Okayama et al.~{\cite[Lemma~4.21]{OkaMatsuSugi}}]
\label{lem:bound-SE-1p-exp}
For all real numbers $x$ and $y$ with $|y|<\pi$,
it holds that
\begin{align*}
\left|
\frac{1}{1+\ee^{x + \ii y}}
\right|
&\leq \frac{1}{(1+\ee^{x})\cos(y/2)},\\
\left|
\frac{1}{1+\ee^{-(x + \ii y)}}
\right|
&\leq \frac{1}{(1+\ee^{-x})\cos(y/2)}.
\end{align*}
\end{lemma}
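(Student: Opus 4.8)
The plan is to prove both inequalities by passing to squared moduli, which eliminates the absolute values and reduces the claim to an elementary real-variable inequality. For the first bound, note that both sides are positive, since $|y|<\pi$ gives $|y/2|<\pi/2$ and hence $\cos(y/2)>0$; it therefore suffices to establish
\[
|1+\ee^{x+\ii y}|^2 \geq (1+\ee^x)^2\cos^2(y/2).
\]
First I would expand the left-hand side by writing $\ee^{x+\ii y}=\ee^x(\cos y + \ii\sin y)$, so that
\[
|1+\ee^{x+\ii y}|^2 = (1+\ee^x\cos y)^2 + \ee^{2x}\sin^2 y = 1 + 2\ee^x\cos y + \ee^{2x},
\]
where the second equality uses $\cos^2 y + \sin^2 y = 1$.

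The key step is to introduce the half-angle identity $\cos y = 2\cos^2(y/2)-1$. Writing $c=\cos^2(y/2)$, the left-hand side becomes $(1-\ee^x)^2 + 4\ee^x c$, while the right-hand side is $(1+\ee^x)^2 c$. Since $(1+\ee^x)^2 - 4\ee^x = (1-\ee^x)^2$, the difference of the two sides simplifies to
\[
(1-\ee^x)^2(1-c)\geq 0,
\]
which holds trivially because $c=\cos^2(y/2)\in[0,1]$. Taking square roots, and using $\cos(y/2)>0$ to preserve the inequality direction, yields the first bound.

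For the second bound I would simply invoke the first with $x$ replaced by $-x$. Indeed, $|1+\ee^{-(x+\ii y)}|^2 = |1+\ee^{-x}\ee^{-\ii y}|^2$ has exactly the same structure as the expression above, with $x\mapsto -x$; the accompanying sign flip on $y$ is harmless because only $\cos y$ and $\sin^2 y$ enter the computation, both of which are even in $y$. Hence the identical argument gives $|1+\ee^{-(x+\ii y)}|^2 \geq (1+\ee^{-x})^2\cos^2(y/2)$, completing the proof.

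I do not anticipate a genuine obstacle, as the estimate is purely computational. The only points demanding care are recognizing the half-angle substitution that exposes the common factor $(1-\ee^x)^2$ (without it the inequality is opaque), and verifying throughout that $\cos(y/2)>0$ on the range $|y|<\pi$, so that dividing by $(1+\ee^{\pm x})\cos(y/2)$ and taking square roots are both legitimate.
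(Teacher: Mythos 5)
Your proof is correct. Note that the paper itself does not prove this lemma: it is imported verbatim from Okayama et al.\ (Lemma~4.21 of the cited reference), so there is no in-paper argument to compare against. Your computation checks out in every step: $|1+\ee^{x+\ii y}|^2 = 1 + 2\ee^{x}\cos y + \ee^{2x}$, the half-angle substitution $\cos y = 2\cos^2(y/2)-1$ reduces the claimed inequality to $(1-\ee^{x})^2(1-\cos^2(y/2))\geq 0$, and the positivity of $\cos(y/2)$ on $|y|<\pi$ justifies taking square roots and inverting. The reduction of the second bound to the first via $x\mapsto -x$ (with the evenness of $\cos y$ and $\sin^2 y$ absorbing the sign of $y$) is also sound. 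This is precisely the kind of elementary squared-modulus verification one would expect for such an estimate, and it makes the borrowed lemma self-contained.
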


\begin{lemma}
\label{lem:bound-SE1-log}
Let $T$ be a positive real number.
For all real numbers $x$ and $y$ with $|y|<\pi$,
it holds that
\begin{align*}
\left|
\log\left(\frac{T}{1+\ee^{-(x+\ii y)}}\right)
\right|
&\leq |\log T| + \frac{1}{\cos(y/2)}\log(1+\ee^{-x}).
\end{align*}
\end{lemma}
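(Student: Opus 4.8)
The plan is to peel off the constant $T$ and reduce everything to a single estimate on $\abs{\log(1+\ee^{-(x+\ii y)})}$. Since $T>0$ is real, dividing by it crosses no branch cut, so with the principal logarithm
\[
\log\left(\frac{T}{1+\ee^{-(x+\ii y)}}\right)=\log T-\log(1+\ee^{-(x+\ii y)}),
\]
and the triangle inequality gives
\[
\left|\log\left(\frac{T}{1+\ee^{-(x+\ii y)}}\right)\right|\leq\abs{\log T}+\abs{\log(1+\ee^{-(x+\ii y)})}.
\]
It therefore suffices to prove the core inequality $\abs{\log(1+\ee^{-(x+\ii y)})}\leq\frac{1}{\cos(y/2)}\log(1+\ee^{-x})$.

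The decisive idea is to resist the temptation to split $\abs{\log w}$ into $\abs{\log\abs{w}}$ and $\abs{\arg w}$: that route is too lossy, since the argument contribution is not controlled by $\log(1+\ee^{-x})$ and would force a power-type overestimate. Instead I would use the integral representation. Writing $s=\ee^{-(x+\ii y)}$, the straight segment $t\mapsto 1+ts$ for $t\in[0,1]$ never meets $(-\infty,0]$ — its imaginary part is $-t\ee^{-x}\sin y$, which vanishes on $[0,1]$ only when $t=0$ or $y=0$, where $1+ts$ is a positive real — so the principal branch satisfies
\[
\log(1+s)=\int_0^1\frac{s}{1+ts}\,\dd{t},
\]
whence $\abs{\log(1+s)}\leq\int_0^1\frac{\abs{s}}{\abs{1+ts}}\,\dd{t}$.

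The key estimate is then a pointwise lower bound on $\abs{1+ts}$. For each $t\in[0,1]$ I would write $ts=(t\ee^{-x})\ee^{-\ii y}$ with $t\ee^{-x}>0$ and apply Lemma~\ref{lem:bound-SE-1p-exp} (its second inequality, with $\ee^{-X}=t\ee^{-x}$) to obtain $\abs{1+ts}\geq(1+t\ee^{-x})\cos(y/2)$. Substituting this into the integrand and integrating,
\[
\abs{\log(1+s)}\leq\frac{1}{\cos(y/2)}\int_0^1\frac{\ee^{-x}}{1+t\ee^{-x}}\,\dd{t}=\frac{1}{\cos(y/2)}\log(1+\ee^{-x}),
\]
which is exactly the desired bound; combining it with the triangle inequality above completes the proof.

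The main obstacle is conceptual rather than computational: one must recognize that keeping the estimate proportional to $\log(1+\ee^{-x})$ (the genuine logarithmic rate) requires the integral representation fed by the pointwise lower bound of Lemma~\ref{lem:bound-SE-1p-exp}, rather than any separate handling of modulus and argument. The only technical care needed is to confirm that the segment $[1,1+s]$ avoids the branch cut so that the integral representation holds for the principal logarithm, which I verified above from the sign structure of $\Im(1+ts)$ under the hypothesis $\abs{y}<\pi$.
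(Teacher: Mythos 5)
Your proof is correct and is essentially the paper's own argument: the paper also splits off $\log T$ by the triangle inequality and bounds $\lvert\log(1+\ee^{-(x+\ii y)})\rvert$ by writing it as $\int_x^{\infty}(1+\ee^{t+\ii y})^{-1}\,\dd t$ and applying Lemma~\ref{lem:bound-SE-1p-exp} pointwise, which yields the same bound $\log(1+\ee^{-x})/\cos(y/2)$. Your segment parametrization $w=1+ts$, $t\in[0,1]$, traces exactly the same straight segment from $1$ to $1+\ee^{-(x+\ii y)}$ as the paper's substitution $r=\ee^{-t}$, so the two computations coincide up to reparametrization (and up to using the second rather than the first inequality of Lemma~\ref{lem:bound-SE-1p-exp}).
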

\begin{proof}
First, it holds that
\begin{align*}
\left|
\log\left(\frac{T}{1+\ee^{-(x + \ii y)}}\right)
\right|
&=   \left|\log T - \log(1 + \ee^{-(x + \ii y)})\right|\\
&\leq |\log T| + |\log(1 + \ee^{-(x+\ii y)})|.
\end{align*}
Furthermore, noting $(\log(1+\ee^{-\zeta}))'= - 1/(1+\ee^{\zeta})$
and using Lemma~\ref{lem:bound-SE-1p-exp},
we have
\begin{align*}
 |\log(1+\ee^{-(x+\ii y)})|
&=
\left|\int_x^{\infty}\frac{1}{1+\ee^{t+\ii y}}\dd{t}\right|\\
&\leq \int_x^{\infty}\frac{1}{|1+\ee^{t+\ii y}|}\dd{t}\\
&\leq \frac{1}{\cos(y/2)}\int_{x}^{\infty}\frac{1}{(1+\ee^{t})}\dd{t}\\
&= \frac{1}{\cos(y/2)}\log(1+\ee^{-x}),
\end{align*}
from which we obtain the desired inequality.
\end{proof}

Using these bounds, we show the following lemma.

\begin{lemma}
\label{lem:SE1-discretization}
Let $K$, $\alpha$, $\beta$ and $d$ be positive constants with $d<\pi$.
Assume that $f$ is analytic on $\psi_1(\domD_d)$, and
satisfies~\eqref{eq:f-bound-log-algebraic} for all $z\in\psi_1(\domD_d)$.
Let $\mu=\min\{\alpha,\beta\}$. Then, putting
$F(x)=f(\psi_1(x))\psi_1'(x)$, we have~\eqref{eq:discretization-error},
where
\[
\mathcal{N}(F,d)
\leq\frac{2KT^{\alpha+\beta-1}}{\mu\cos^{\alpha+\beta}(d/2)}
\left\{2|\log T|+
 \left(2\log 2 + \frac{1}{\mu}\right)\frac{1}{\cos(d/2)}\right\}.
\]
\end{lemma}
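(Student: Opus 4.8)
The plan is to establish the two claims of the lemma at once: that $F\in\mathbf{B}(\domD_d)$, so that the classical discretization-error theorem applies and \eqref{eq:discretization-error} holds, and that the stated explicit bound on $\mathcal{N}(F,d)$ is valid. Analyticity of $F$ on $\domD_d$ is immediate, since $\psi_1$ is analytic there (its poles lie on the lines $\abs{\Im\zeta}=\pi$ and $d<\pi$), $f$ is analytic on $\psi_1(\domD_d)$ by hypothesis, and $\psi_1'$ is analytic; so the substantive work is the quantitative estimate of $\abs{F(x\pm\ii y)}$ and its integral in $x$.

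The key manipulation I would use is the identity
\[
\psi_1'(x) = \frac{1}{T}\,\psi_1(x)\,(T-\psi_1(x)),
\]
which follows from the explicit forms $\psi_1(x)=T/(1+\ee^{-x})$ and $T-\psi_1(x)=T/(1+\ee^{x})$. Inserting the hypothesis \eqref{eq:f-bound-log-algebraic} and this identity into $F=f(\psi_1)\psi_1'$ converts the delicate algebraic factors $\abs{z}^{\alpha-1}\abs{T-z}^{\beta-1}$, whose exponents may be negative, into $\abs{z}^{\alpha}\abs{T-z}^{\beta}$ with the positive exponents $\alpha,\beta$. Writing $z=\psi_1(x+\ii y)$, this gives
\[
\abs{F(x+\ii y)} \leq K\,T^{\alpha+\beta-1}\,\abs{1+\ee^{-(x+\ii y)}}^{-\alpha}\,\abs{1+\ee^{x+\ii y}}^{-\beta}\,\abs{\log z}.
\]
I expect this to be the crux: only because the exponents are now positive does Lemma~\ref{lem:bound-SE-1p-exp} apply in the correct direction to each factor, yielding $\abs{1+\ee^{-(x+\ii y)}}^{-\alpha}\le[(1+\ee^{-x})\cos(y/2)]^{-\alpha}$ and $\abs{1+\ee^{x+\ii y}}^{-\beta}\le[(1+\ee^{x})\cos(y/2)]^{-\beta}$, while Lemma~\ref{lem:bound-SE1-log} bounds $\abs{\log z}$ by $\abs{\log T}+\log(1+\ee^{-x})/\cos(y/2)$. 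The identical bound holds for $\abs{F(x-\ii y)}$, since only $\cos(y/2)$ enters.

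Collecting the cosine factors yields $\cos^{-(\alpha+\beta)}(y/2)$ in front of the $x$-integral together with an extra $\cos^{-1}(y/2)$ attached to the logarithmic term, and letting $y\to d-0$ (the bound is monotone in $y$) replaces $y$ by $d$. It then remains to estimate the two explicit real integrals
\[
I_1=\int_{-\infty}^{\infty}\frac{\dd x}{(1+\ee^{-x})^{\alpha}(1+\ee^{x})^{\beta}},\qquad
I_2=\int_{-\infty}^{\infty}\frac{\log(1+\ee^{-x})}{(1+\ee^{-x})^{\alpha}(1+\ee^{x})^{\beta}}\,\dd x.
\]
Splitting each at $x=0$ and using $(1+\ee^{x})^{\beta}\ge\ee^{\beta x}$ and $(1+\ee^{-x})^{\alpha}\ge\ee^{-\alpha x}$ on the respective tails, together with $\log(1+\ee^{-x})\le\log 2$ for $x\ge0$ and (after $u=-x$) $\log(1+\ee^{u})\le u+\log 2$ for the left tail, I would obtain $I_1\le 1/\alpha+1/\beta\le 2/\mu$ and, via the elementary integral $\int_0^{\infty}u\,\ee^{-\alpha u}\,\dd u=1/\alpha^2$, the bound $I_2\le 2\log 2/\mu+1/\mu^2$. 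These same exponential tail estimates show the integrand decays uniformly on $y\in[-d,d]$, which simultaneously gives the finiteness of $\mathcal{N}(F,d)$ and the vanishing condition \eqref{eq:F-imag-int-0} required for membership in $\mathbf{B}(\domD_d)$. Finally, assembling $\abs{\log T}\,I_1+I_2/\cos(d/2)$ and factoring out $1/\mu$ reproduces exactly the claimed constant, with the $\cos^{\alpha+\beta}(d/2)$ denominator arising from the collected cosine factors.
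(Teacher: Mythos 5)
Your proposal is correct and follows essentially the same route as the paper: the identity $\psi_1'=\psi_1(T-\psi_1)/T$ to convert the exponents $\alpha-1,\beta-1$ into $\alpha,\beta$, then Lemmas~\ref{lem:bound-SE-1p-exp} and~\ref{lem:bound-SE1-log} for the cosine factors and the logarithm, followed by the same tail estimates ($\log(1+\ee^{-x})\le\log 2$ on the right, $\le -x+\log 2$ on the left) yielding $I_1\le 2/\mu$ and $I_2\le(2\log 2+1/\mu)/\mu$ and hence the stated constant. The paper merely spells out the vanishing condition~\eqref{eq:F-imag-int-0} in more detail for the two limits $x\to\pm\infty$, which your uniform-decay remark covers.
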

\begin{proof}
It suffices to show that $F\in\mathbf{B}(\domD_d)$.
Because $f(\psi_1(\cdot))$ is analytic on $\domD_d$ and
$\psi_1'$ is analytic on $\domD_{\pi}$,
$F$ is analytic on $\domD_d$.
Next, we show~\eqref{eq:F-imag-int-0}.
From~\eqref{eq:f-bound-log-algebraic}
and Lemmas~\ref{lem:bound-SE-1p-exp}
and~\ref{lem:bound-SE1-log}, it holds
for $\zeta=x+\ii y\in \domD_d$ that
\begin{align}
 |F(\zeta)|&\leq
\frac{KT^{\alpha+\beta-1}}{|1+\ee^{-\zeta}|^{\alpha}|1+\ee^{\zeta}|^{\beta}}
\left|\log\left(\frac{T}{1+\ee^{-\zeta}}\right)\right|\nonumber\\
&\leq \frac{KT^{\alpha+\beta-1}}{(1+\ee^{-x})^{\alpha}\cos^{\alpha}(y/2)
(1+\ee^{x})^{\beta}\cos^{\beta}(y/2)}
\left(|\log T| + \frac{1}{\cos(y/2)}\log(1+\ee^{-x})\right).
\label{eq:bound-F-SE1}
\end{align}
Using this inequality, for $x\geq 0$ we have
\begin{align*}
&     \int_{-d}^d\left|F(x + \ii y)\right|\dd y\\
&\leq\frac{KT^{\alpha+\beta-1}}{(1+\ee^{-x})^{\alpha}(1+\ee^{x})^{\beta}}
\int_{-d}^d
\frac{1}{\cos^{\alpha+\beta}(y/2)}
\left(|\log T| + \frac{1}{\cos(y/2)}\log(1+\ee^{-x})\right)
 \dd{y}\\
&\leq\frac{KT^{\alpha+\beta-1}}{(1+\ee^{-x})^{\alpha}(1+\ee^{x})^{\beta}}
\int_{-d}^d
\frac{1}{\cos^{\alpha+\beta}(y/2)}
\left(|\log T| + \frac{1}{\cos(y/2)}\log(1+\ee^{-0})\right)
 \dd{y}\\
&\to 0 \cdot
\int_{-d}^d\frac{1}{\cos^{\alpha+\beta}(y/2)}
\left(|\log T| + \frac{1}{\cos(y/2)}\cdot \log 2\right)\dd{y}
\quad (x \to \infty),
\end{align*}
and for $x < 0$, using $\log(1+\ee^{-x})\leq -x+1$, we have
\begin{align*}
&     \int_{-d}^d\left|F(x + \ii y)\right|\dd y\\
&\leq\frac{(-x+1) KT^{\alpha+\beta-1}}{(1+\ee^{-x})^{\alpha}(1+\ee^{x})^{\beta}}
\int_{-d}^d\frac{1}{\cos^{\alpha+\beta}(y/2)}
\left(\frac{|\log T|}{(-x+1)} + \frac{1}{\cos(y/2)}\frac{\log(1+\ee^{-x})}{(-x+1)}\right)\dd{y}\\
&\leq\frac{(-x+1) KT^{\alpha+\beta-1}}{(1+\ee^{-x})^{\alpha}(1+\ee^{x})^{\beta}}
\int_{-d}^d\frac{1}{\cos^{\alpha+\beta}(y/2)}
\left(\frac{|\log T|}{(-0+1)} + \frac{1}{\cos(y/2)}\cdot 1\right)\dd{y}\\
&\to 0 \cdot
\int_{-d}^d\frac{1}{\cos^{\alpha+\beta}(y/2)}
\left(|\log T| + \frac{1}{\cos(y/2)}\cdot 1\right)\dd{y}
\quad (x \to -\infty),
\end{align*}
which shows~\eqref{eq:F-imag-int-0}.
Finally, we estimate $\mathcal{N}(F,d)$.
Using~\eqref{eq:bound-F-SE1},
we have
\begin{align*}
&\int_{-\infty}^{\infty} \left\{|F(x + \ii y)|+|F(x - \ii y)|\right\}\dd x\\
&\leq\int_{-\infty}^{\infty}
\frac{KT^{\alpha+\beta-1}}{(1+\ee^{-x})^{\alpha}
(1+\ee^{x})^{\beta}\cos^{\alpha+\beta}(y/2)}
\left(|\log T| + \frac{\log(1+\ee^{-x})}{\cos(y/2)}\right)\dd{x}\\
&\quad +
\int_{-\infty}^{\infty}
\frac{KT^{\alpha+\beta-1}}{(1+\ee^{-x})^{\alpha}
(1+\ee^{x})^{\beta}\cos^{\alpha+\beta}(-y/2)}
\left(|\log T| + \frac{\log(1+\ee^{-x})}{\cos(-y/2)}\right)\dd{x}\\
&\leq\frac{2KT^{\alpha+\beta-1}}{\cos^{\alpha+\beta}(d/2)}
\int_{-\infty}^{\infty}
\frac{1}{(1+\ee^{-x})^{\alpha}(1+\ee^{x})^{\beta}}
\left(|\log T| + \frac{\log(1+\ee^{-x})}{\cos(d/2)}\right)\dd{x},
\end{align*}
where $\cos(d/2)\leq\cos(\pm y/2)$ is used,
which holds for $y\in [-d,d]$ (note that $d<\pi$).
Therefore, the inequality remains valid
when taking the limit $y\to d - 0$.
For the first term of the integral, using $\mu=\min\{\alpha,\beta\}$,
we have
\begin{align*}
\int_{-\infty}^{\infty}
 \frac{|\log T|}{(1+\ee^{-x})^{\alpha}(1+\ee^{x})^{\beta}}\dd{x}
&\leq \int_{-\infty}^{\infty}
 \frac{|\log T|}{(1+\ee^{-x})^{\mu}(1+\ee^{x})^{\mu}}\dd{x}\\
&=2 \int_0^{\infty}
\frac{|\log T|}{(1+\ee^{-x})^{2\mu}}\ee^{-\mu x}\dd x\\
&\leq 2 \int_0^{\infty}
\frac{|\log T|}{(1+0)^{2\mu}}\ee^{-\mu x}\dd x\\
&=\frac{2}{\mu}|\log T|.
\end{align*}
For the second term of the integral, we have
\begin{align*}
&\frac{1}{\cos(d/2)}
\int_{-\infty}^{\infty}
 \frac{\log(1+\ee^{-x})}{(1+\ee^{-x})^{\alpha}(1+\ee^{x})^{\beta}}\dd{x}\\
&=\frac{1}{\cos(d/2)}\int_{-\infty}^{0}
 \frac{\ee^{\alpha x}\left\{-x + \log(1+\ee^{x})\right\}}{(1+\ee^{x})^{\alpha+\beta}}\dd{x}
+\frac{1}{\cos(d/2)}\int_{0}^{\infty}
 \frac{\ee^{-\beta x}\log(1+\ee^{-x})}{(1+\ee^{-x})^{\alpha+\beta}}\dd{x}\\
&\leq\frac{1}{\cos(d/2)}\int_{-\infty}^{0}
 \frac{\ee^{\alpha x}\left\{-x+\log(1+\ee^{0})\right\}}{(1+0)^{\alpha+\beta}}\dd{x}
+\frac{1}{\cos(d/2)}\int_{0}^{\infty}
 \frac{\ee^{-\beta x}\log(1+\ee^{-0})}{(1+0)^{\alpha+\beta}}\dd{x}\\
&=\frac{1}{\alpha\cos(d/2)}\left(\log 2 + \frac{1}{\alpha}\right)
+\frac{\log 2}{\beta\cos(d/2)}.
\end{align*}
Furthermore, using $\mu=\min\{\alpha,\beta\}$, we obtain the conclusion.
\end{proof}

Next, we estimate the truncation error.
For this purpose, we use the following result.

\begin{proposition}
\label{prop:SE1-monotone}
Let $\alpha$ and $\beta$ be positive constants.
Let $G_{-}(x)= - x\ee^{\alpha x}$
and $G_{+}(x)= x\ee^{-\beta x}$.
Then, $G_{-}$ monotonically increases for $x\leq -1/\alpha$,
and $G_{+}$ monotonically decreases for $x\geq 1/\beta$.
\end{proposition}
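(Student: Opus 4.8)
The plan is to reduce both claims to a sign analysis of the first derivatives, since monotonicity of a differentiable function on an interval is governed entirely by the sign of its derivative there. First I would compute, by the product rule on $G_{-}(x) = -x\ee^{\alpha x}$,
\[
 G_{-}'(x) = -\ee^{\alpha x} - \alpha x\ee^{\alpha x} = -\ee^{\alpha x}(1 + \alpha x).
\]
Because the exponential factor $\ee^{\alpha x}$ is strictly positive everywhere, the sign of $G_{-}'(x)$ is the opposite of the sign of the linear factor $1 + \alpha x$.

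Next I would observe that on the half-line $x \le -1/\alpha$ one has $\alpha x \le -1$, hence $1 + \alpha x \le 0$, so that $G_{-}'(x) = -\ee^{\alpha x}(1 + \alpha x) \ge 0$ throughout. This yields that $G_{-}$ is monotonically increasing for $x \le -1/\alpha$, as claimed.

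The second claim is handled symmetrically. Differentiating $G_{+}(x) = x\ee^{-\beta x}$ gives
\[
 G_{+}'(x) = \ee^{-\beta x} - \beta x\ee^{-\beta x} = \ee^{-\beta x}(1 - \beta x),
\]
and again the exponential factor is positive, so the sign of $G_{+}'(x)$ matches that of $1 - \beta x$. On the half-line $x \ge 1/\beta$ one has $\beta x \ge 1$, hence $1 - \beta x \le 0$, giving $G_{+}'(x) \le 0$ there and therefore the monotone decrease of $G_{+}$ for $x \ge 1/\beta$.

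Since the argument is entirely elementary, there is no genuine obstacle; the only point requiring care is to track the sign of the linear factor $1 \pm (\cdot)\,x$ correctly and to confirm that the thresholds $-1/\alpha$ and $1/\beta$ are precisely the points where these factors vanish, so that the stated half-lines are the maximal intervals of monotonicity. I expect this proposition to be invoked later to bound the tail sums defining the truncation error by monotone-majorant integrals, which is why the monotonicity is asserted on exactly these one-sided ranges.
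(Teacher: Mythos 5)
Your computation is correct: both derivative signs are verified accurately, and the thresholds $-1/\alpha$ and $1/\beta$ are exactly where the linear factors $1+\alpha x$ and $1-\beta x$ change sign. The paper states this proposition without proof, treating it as elementary, and your argument is precisely the standard one it implicitly relies on (and the proposition is indeed used later to majorize the truncation-error tail sums by integrals, as you anticipated).
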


We bound the truncation error as follows.

\begin{lemma}
\label{lem:SE1-truncation}
Let $K$, $\alpha$ and $\beta$ be positive constants.
Assume that $f$ satisfies~\eqref{eq:f-bound-log-algebraic}
for all $z\in (0, T)$.
Let $\mu=\min\{\alpha,\beta\}$,
let $n$ be positive integer,
and
let $M$ and $N$ be selected by~\eqref{eq:def-MN-SE}.
Let $Mh\geq 1/\alpha$ be satisfied.
Then, putting $F(x)=f(\psi_1(x))\psi_1'(x)$, we have
\begin{align*}
\left|
h\sum_{k=-\infty}^{-M-1}F(kh)
+h\sum_{k=N+1}^{\infty}F(kh)
\right|
\leq \frac{KT^{\alpha+\beta-1}}{\mu}
\left\{2|\log T|+ 2\log 2 + \frac{1}{\mu}
+nh
\right\}\ee^{-\mu n h}.
\end{align*}
\end{lemma}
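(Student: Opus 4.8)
The plan is to reduce everything to an elementary bound on $|F|$ along the real axis and then compare each tail sum to an integral. First I would use the explicit forms of the SE transformation, namely $\psi_1(x)=T/(1+\ee^{-x})$, $T-\psi_1(x)=T/(1+\ee^{x})$, and $\psi_1'(x)=T/\{(1+\ee^{-x})(1+\ee^{x})\}$, together with the hypothesis~\eqref{eq:f-bound-log-algebraic} and $|\log\psi_1(x)|\leq|\log T|+\log(1+\ee^{-x})$. This is exactly the $y=0$ specialization of the bound~\eqref{eq:bound-F-SE1}, giving for real $x$
\[
|F(x)|\leq \frac{KT^{\alpha+\beta-1}}{(1+\ee^{-x})^{\alpha}(1+\ee^{x})^{\beta}}\bigl(|\log T|+\log(1+\ee^{-x})\bigr).
\]
I would then treat the right tail ($k\geq N+1$, so $x>0$) and the left tail ($k\leq -M-1$, so $x<0$) separately, since the $\log$ factor behaves very differently in the two regimes.

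For the right tail I would note that for $x>0$ the factor $(1+\ee^{-x})^{\alpha}\geq1$, $(1+\ee^{x})^{\beta}\geq \ee^{\beta x}$, and $\log(1+\ee^{-x})\leq\log 2$, so $|F(x)|\leq KT^{\alpha+\beta-1}(|\log T|+\log 2)\ee^{-\beta x}$, a purely decaying exponential. Because this dominating function is decreasing, the standard integral comparison $h\sum_{k=N+1}^{\infty}g(kh)\leq\int_{Nh}^{\infty}g(x)\dd{x}$ applies, producing a factor $\ee^{-\beta Nh}/\beta$. Using $\beta N\geq\mu n$ (from $N=\lceil(\mu/\beta)n\rceil$) and $1/\beta\leq 1/\mu$, this tail is bounded by $\frac{KT^{\alpha+\beta-1}}{\mu}(|\log T|+\log 2)\ee^{-\mu n h}$.

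The left tail is the crux. For $x<0$ I would use $(1+\ee^{x})^{\beta}\geq1$, $(1+\ee^{-x})^{\alpha}\geq\ee^{-\alpha x}$, and $\log(1+\ee^{-x})\leq -x+\log 2$, which yields $|F(x)|\leq KT^{\alpha+\beta-1}\bigl\{(|\log T|+\log 2)\ee^{\alpha x}+(-x)\ee^{\alpha x}\bigr\}$. The obstacle is that the logarithmic singularity introduces the term $-x\,\ee^{\alpha x}=G_{-}(x)$, which is \emph{not} monotone on all of $(-\infty,0)$, so the integral comparison cannot be applied naively. This is precisely where Proposition~\ref{prop:SE1-monotone} enters: $G_{-}$ is monotonically increasing on $x\leq -1/\alpha$, and since the hypothesis $Mh\geq 1/\alpha$ forces every summation point $x=kh\leq-(M+1)h<-1/\alpha$ (and the comparison integral to start at $-Mh\leq-1/\alpha$), the dominating function is increasing throughout the relevant range. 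Then $h\sum_{k=-\infty}^{-M-1}g(kh)\leq\int_{-\infty}^{-Mh}g(x)\dd{x}$ applies; evaluating $\int_{-\infty}^{-Mh}\ee^{\alpha x}\dd{x}=\ee^{-\alpha Mh}/\alpha$ and $\int_{-\infty}^{-Mh}(-x)\ee^{\alpha x}\dd{x}=\frac{\ee^{-\alpha Mh}}{\alpha}(Mh+\tfrac{1}{\alpha})$ gives a left-tail bound $\frac{KT^{\alpha+\beta-1}}{\alpha}\ee^{-\alpha Mh}\bigl(|\log T|+\log 2+Mh+\tfrac{1}{\alpha}\bigr)$.

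Finally I would combine the two tails and simplify using $\alpha M\geq\mu n$ (so $\ee^{-\alpha Mh}\leq\ee^{-\mu n h}$), $1/\alpha\leq1/\mu$, and the elementary fact that $M=\lceil(\mu/\alpha)n\rceil\leq n$ because $(\mu/\alpha)n\leq n$ (hence $Mh\leq nh$). Adding the right-tail contribution $|\log T|+\log 2$ to the left-tail contribution $|\log T|+\log 2+nh+\tfrac{1}{\mu}$ inside the bracket yields exactly $2|\log T|+2\log 2+\tfrac{1}{\mu}+nh$, multiplied by $\frac{KT^{\alpha+\beta-1}}{\mu}\ee^{-\mu n h}$, which is the claimed bound. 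I expect the only delicate point to be the left-tail monotonicity argument; the rest is bookkeeping with elementary integrals and the inequalities on $M$, $N$ coming from their definitions~\eqref{eq:def-MN-SE}.
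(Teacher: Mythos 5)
Your proposal is correct and follows essentially the same route as the paper's proof: the $y=0$ specialization of the bound on $|F|$, the splitting into left and right tails with $\log(1+\ee^{-x})\leq -x+\log 2$ on the left, the appeal to Proposition~\ref{prop:SE1-monotone} together with $Mh\geq 1/\alpha$ to justify the sum-to-integral comparison on the left tail, and the final bookkeeping via $M\leq n$, $\alpha M\geq\mu n$, $\beta N\geq\mu n$. The integral evaluations and the assembled constant match the claimed bound exactly.
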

\begin{proof}
From~\eqref{eq:f-bound-log-algebraic}, it holds that
\begin{align*}
 |F(x)|&\leq
\frac{KT^{\alpha+\beta-1}}{(1+\ee^{-x})^{\alpha}(1+\ee^{x})^{\beta}}
\left|\log\left(\frac{T}{1+\ee^{-x}}\right)\right|\\
&\leq
\frac{KT^{\alpha+\beta-1}}{(1+\ee^{-x})^{\alpha}(1+\ee^{x})^{\beta}}
\left\{|\log T| + \log(1 +\ee^{-x})\right\},
\end{align*}
which is also obtained by substituting $y=0$ into~\eqref{eq:bound-F-SE1}.
Using this inequality, we have
\begin{align*}
&\left|
h\sum_{k=-\infty}^{-M-1}F(kh)
+h\sum_{k=N+1}^{\infty}F(kh)
\right|\\
&\leq h\sum_{k=-\infty}^{-M-1}
\frac{KT^{\alpha+\beta-1}}{(1+\ee^{-kh})^{\alpha}(1+\ee^{kh})^{\beta}}
\left\{|\log T| + \log(1+\ee^{-kh})\right\}\\
&\quad + h\sum_{k=N+1}^{\infty}
\frac{KT^{\alpha+\beta-1}}{(1+\ee^{-kh})^{\alpha}(1+\ee^{kh})^{\beta}}
\left\{|\log T| + \log(1+\ee^{-kh})\right\}\\
& =h\sum_{k=-\infty}^{-M-1}
\frac{KT^{\alpha+\beta-1}\ee^{\alpha k h}}{(1+\ee^{kh})^{\alpha+\beta}}
\left\{|\log T| + (-kh) + \log(1+\ee^{kh})\right\}\\
&\quad + h\sum_{k=N+1}^{\infty}
\frac{KT^{\alpha+\beta-1}\ee^{-\beta k h}}{(1+\ee^{-kh})^{\alpha+\beta}}
\left\{|\log T| + \log(1+\ee^{-kh})\right\}\\
&\leq h\sum_{k=-\infty}^{-M-1}
\frac{KT^{\alpha+\beta-1}\ee^{\alpha k h}}{(1+0)^{\alpha+\beta}}
\left\{|\log T| + (-kh) + \log(1+\ee^{0})\right\}\\
&\quad + h\sum_{k=N+1}^{\infty}
\frac{KT^{\alpha+\beta-1}\ee^{-\beta k h}}{(1+ 0)^{\alpha+\beta}}
\left\{|\log T| + \log(1+\ee^{-0})\right\}.
\end{align*}
Using Proposition~\ref{prop:SE1-monotone},
for $Mh\geq 1/\alpha$, we bound the first sum as
\begin{align*}
&h\sum_{k=-\infty}^{-M-1}
KT^{\alpha+\beta-1}
\left\{|\log T| + (-kh) + \log 2\right\}\ee^{\alpha k h}\\
&\leq KT^{\alpha+\beta-1}\int_{-\infty}^{-Mh}
\left\{|\log T| + (-x) + \log 2\right\}\ee^{\alpha x}\dd{x}\\
&=\frac{KT^{\alpha+\beta-1}}{\alpha}
\left(|\log T| + M h + \log 2 + \frac{1}{\alpha} \right)
\ee^{-\alpha M h}\\
&\leq\frac{KT^{\alpha+\beta-1}}{\mu}
\left(|\log T| + M h + \log 2 + \frac{1}{\mu} \right)
\ee^{-\alpha M h},
\end{align*}
where $\mu=\min\{\alpha,\beta\}$ is used
at the last inequality.
Similarly, we bound the second term as
\begin{align*}
 h\sum_{k=N+1}^{\infty}
KT^{\alpha+\beta-1} \left(|\log T| + \log 2\right)
\ee^{-\beta k h}
&\leq KT^{\alpha+\beta-1}\left(|\log T| + \log 2\right)
\int_{Nh}^{\infty}\ee^{-\beta x}\dd{x}\\
& = KT^{\alpha+\beta-1}\left(|\log T| + \log 2\right)
\cdot\frac{1}{\beta}\ee^{-\beta N h}\\
&\leq KT^{\alpha+\beta-1}\left(|\log T| + \log 2\right)
\cdot\frac{1}{\mu}\ee^{-\beta N h}.
\end{align*}
The final task to obtain the conclusion is
showing $Mh\leq nh$,
$\ee^{-\alpha M h}\leq \ee^{-\mu n h}$ and
$\ee^{-\beta N h}\leq \ee^{-\mu n h}$.
Using~\eqref{eq:def-MN-SE}, we have $M \leq n$ because
both $M$ and $n$ are integers that satisfy
\[
 M = \left\lceil\frac{\mu}{\alpha}n\right\rceil
   < \frac{\mu}{\alpha}n + 1 \leq n+1.
\]
Therefore, we have $Mh\leq nh$.
Furthermore, using~\eqref{eq:def-MN-SE},
we have $\ee^{-\alpha M h}\leq \ee^{-\mu n h}$ because
\[
 \alpha M = \alpha \left\lceil\frac{\mu}{\alpha}n\right\rceil
 \geq \alpha \cdot \frac{\mu}{\alpha} n = \mu n.
\]
In the same manner, using~\eqref{eq:def-MN-SE},
we have $\ee^{-\beta N h}\leq \ee^{-\mu n h}$
because $\beta N\geq \mu n$,
which completes the proof.
\end{proof}

We are now in a position to prove Theorem~\ref{thm:new-SE1}.
Note that if $n\geq 1/(2\pi d \mu)$,
then with $h$ selected by~\eqref{eq:standard-h-SE},
it holds that
\begin{equation}
 Mh = \left\lceil\frac{\mu}{\alpha} n\right\rceil
\sqrt{\frac{2\pi d}{\mu n}}
\geq \frac{\mu}{\alpha} n \sqrt{\frac{2\pi d}{\mu n}}
=\frac{1}{\alpha}\sqrt{2\pi d \mu n} \geq \frac{1}{\alpha}.
\label{eq:Mh-bounded-by-1-alpha}
\end{equation}
Therefore,
from Lemmas~\ref{lem:SE1-discretization}
and~\ref{lem:SE1-truncation},
substituting~\eqref{eq:standard-h-SE} into $h$,
we have
\begin{align*}
\left|
\int_{-\infty}^{\infty}F(x)\dd{x}
- h \sum_{k=-M}^N F(kh)
\right| \leq C(n) \sqrt{n}\ee^{-\sqrt{2\pi d \mu n}},
\end{align*}
where
\[
 C(n)
=\frac{K T^{\alpha+\beta-1}}{\mu}
\left[
\frac{4|\log T|\cos(d/2) + 2 l_{\mu}}{\sqrt{n}(1 - \ee^{-\sqrt{2\pi d \mu n}})\cos^{\alpha+\beta+1}(d/2)}
+\frac{2|\log T| + l_{\mu}}{\sqrt{n}}
+\sqrt{\frac{2\pi d}{\mu}}
\right],
\]
where $l_{\mu}=2\log 2 + (1/\mu)$.
Furthermore, $C(n)\leq C(1)$ holds, which completes the proof of
Theorem~\ref{thm:new-SE1}.

\subsubsection{Proof of Theorem~\ref{thm:new-DE1}}

In the case of the DE formula as well,
we estimate both the discretization and truncation errors.
To bound the discretization error,
the following bounds are required.

\begin{lemma}[Okayama et al.~{\cite[Lemma~4.22]{OkaMatsuSugi}}]
\label{lem:bound-DE-1p-exp}
For all real numbers $x$ and $y$ with $|y|<\pi/2$,
it holds that
\begin{align*}
\left|
\frac{1}{1+\ee^{\pi\sinh(x + \ii y)}}
\right|
&\leq \frac{1}{(1+\ee^{\pi\sinh(x)\cos y})\cos((\pi/2)\sin y)},\\
\left|
\frac{1}{1+\ee^{-\pi\sinh(x + \ii y)}}
\right|
&\leq \frac{1}{(1+\ee^{-\pi\sinh(x)\cos y})\cos((\pi/2)\sin y)}.
\end{align*}
\end{lemma}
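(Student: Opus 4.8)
The plan is to reduce both displayed inequalities to a single real inequality, exploiting that their right-hand sides involve $\cosh$ of the real part only. Write $w=\pi\sinh(x+\ii y)$ and separate real and imaginary parts,
\[
 \Re w = \pi\sinh x\cos y =: a,\qquad \Im w = \pi\cosh x\sin y =: b .
\]
Then $\ee^{\pm w}=\ee^{\pm a}(\cos b\pm\ii\sin b)$, so a direct computation gives $|1+\ee^{\pm w}|^2 = 1 + 2\ee^{\pm a}\cos b + \ee^{\pm 2a}$. Since the claimed bounds read $|1+\ee^{\pm w}|\ge(1+\ee^{\pm a})\cos(\tfrac{\pi}{2}\sin y)$, I would square them, expand $(1+\ee^{\pm a})^2\cos^2(\tfrac{\pi}{2}\sin y)$, and subtract. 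Dividing the resulting difference by the positive factor $2\ee^{\pm a}$ and using $\cosh(-a)=\cosh a$, both cases collapse to the \emph{same} inequality
\[
 (\cosh a + 1)\sin^2\Big(\tfrac{\pi}{2}\sin y\Big)\ \ge\ 1-\cos b .
\]
This reduction is the routine part, and it already halves the work by unifying the two statements.

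Writing $\phi=(\pi/2)\sin y$ and $1-\cos b = 2\sin^2(b/2)$ with $b/2=\phi\cosh x$, the target becomes $(\cosh a+1)\sin^2\phi\ge 2\sin^2(b/2)$. I would first record its structural features: it is even in $x$ (both $\cosh a$ and $b$ are), and at $x=0$ one has $a=0$, $b=2\phi$, so both sides equal $2\sin^2\phi$, i.e.\ equality holds. The essential point, and the reason this is not a one-line computation like the $\sinh$-free Lemma~\ref{lem:bound-SE-1p-exp}, is that applying that lemma pointwise with real part $a$ and imaginary part $b$ would only yield the factor $\cos(b/2)$; since $b/2=\phi\cosh x\ge\phi$ forces $\cos(b/2)\le\cos\phi$, this is the \emph{wrong} direction. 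Hence the sharper factor $\cos\phi$ can hold only because $a=\pi\sinh x\cos y$ grows with $|x|$ and must be used to dominate the oscillation of $\cos b$.

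To make this quantitative I would pass to the equivalent form coming from $1+\ee^{w}=2\ee^{w/2}\cosh(w/2)$ and $|\cosh(w/2)|^2=\sinh^2(a/2)+\cos^2(b/2)$, which, after writing $\cos^2\phi-\cos^2(b/2)=\sin(\phi+b/2)\sin(b/2-\phi)$ and noting $b/2-\phi=\phi(\cosh x-1)\ge 0$, turns the target into
\[
 \sin\!\big(\phi+\tfrac{b}{2}\big)\,\sin\!\big(\tfrac{b}{2}-\phi\big)\ \le\ \sinh^2(a/2)\,\sin^2\phi .
\]
Then I would split on the size of $\sinh(a/2)$: when $\sinh^2(a/2)\ge\cot^2\phi$ the right-hand side already exceeds $\cos^2\phi$, so the inequality is immediate; in the complementary range $\sinh(a/2)<\cot\phi$ this same bound constrains $\sinh x$, and hence $b/2=\phi\cosh x$, so that the left-hand side can be estimated through $\sin(\phi+b/2)\le 1$ (or $\le\phi+b/2$ when $b/2$ is small) together with $\sin(b/2-\phi)\le\phi(\cosh x-1)$ and Jordan's inequality $\sin\phi\ge(2/\pi)\phi$.

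\textbf{Main obstacle.} The delicate step is the complementary regime, where the estimate must be shown to hold \emph{uniformly} in $y\in(0,\pi/2)$: for $y$ near $\pi/2$ the factor $\sin^2\phi$ is large but $\cos y$, hence $a$, is small, while for $y$ near $0$ the roles reverse, and the two crude bounds $\sin^2(b/2)\le 1$ and $\sin^2(b/2)\le(b/2)^2$ each fail on part of the range. Reconciling the doubly-exponential growth of $\cosh(\pi\sinh x\cos y)$ against the merely algebraic growth of $\cosh^2 x$, uniformly in $y$, is the crux, and is where I expect the real effort to lie. (Since the statement is quoted as Okayama et al.~Lemma~4.22, one may alternatively simply invoke that reference.)
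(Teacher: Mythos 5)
The paper gives no proof of this statement: it is imported verbatim from Okayama et al.\ \cite[Lemma~4.22]{OkaMatsuSugi}, so the only ``proof'' in the paper is the citation, and your attempt has to be judged as a self-contained argument. Your reductions are all correct: with $a=\pi\sinh x\cos y$, $b=\pi\cosh x\sin y$, $\phi=(\pi/2)\sin y$, both inequalities are equivalent to $(\cosh a+1)\sin^2\phi\ge 1-\cos b$, i.e.\ to
\[
\cosh\Bigl(\frac{a}{2}\Bigr)\sin\phi\ \ge\ \Bigl|\sin\Bigl(\frac{b}{2}\Bigr)\Bigr|,
\]
and your diagnosis is also correct that Lemma~\ref{lem:bound-SE-1p-exp} applied to $\Re w$, $\Im w$ gives the wrong factor. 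But this displayed inequality \emph{is} the lemma -- everything before it is bookkeeping -- and your proposal stops exactly there: the case $\sinh^2(a/2)\ge\cot^2\phi$ is indeed immediate, but in the complementary case you only list candidate estimates and explicitly state that you do not know how to make them work uniformly in $y$. That is a genuine gap, not a routine verification left to the reader.

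The gap is closable, and more cheaply than your sketch suggests, because you do not need the dichotomy at all. Write $u=\cosh x\ge1$, $v=|\sinh x|$, $A=(\pi/2)\cos y$, so that $a/2=\pm Av$, $b/2=\phi u$ and $A^2+\phi^2=\pi^2/4$. Concavity of $\sin$ on $[0,\pi]$ gives the tangent-line bound $|\sin(\phi u)|\le\sin\phi+\phi(u-1)\cos\phi$ for all $u\ge1$ (for $\phi u>\pi$ use $|\sin(\phi u)|\le1\le\sin\phi+(\pi-\phi)\cos\phi$, the latter being decreasing in $\phi$ with value $1$ at $\phi=\pi/2$). Hence it suffices that $(\cosh(Av)-1)\sin\phi\ge\phi\,(\sqrt{1+v^2}-1)\cos\phi$, and since $\cosh(Av)-1\ge A^2v^2/2$ while $\sqrt{1+v^2}-1\le v^2/2$, everything reduces to the single-variable inequality $\bigl(\pi^2/4-\phi^2\bigr)\tan\phi\ge\phi$ on $(0,\pi/2)$, which is the Becker--Stark lower bound for $\tan$ (and can also be checked directly by showing $g(\phi)=(\pi^2/4-\phi^2)\sin\phi-\phi\cos\phi$ vanishes at both endpoints and has a single interior sign change of $g'$). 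If you do not want to supply this, the honest route is the one you mention in passing: invoke \cite[Lemma~4.22]{OkaMatsuSugi}, exactly as the paper does.
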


\begin{lemma}
\label{lem:bound-DE1-log}
Let $T$ be a positive real number.
For all real numbers $x$ and $y$ with $|y|<\pi/2$,
it holds that
\begin{align*}
\left|
\log\left(\frac{T}{1+\ee^{-\pi\sinh(x+\ii y)}}\right)
\right|
&\leq |\log T|
 + \frac{1}{\cos((\pi/2)\sin y)\cos y}\log(1+\ee^{-\pi\sinh(x)\cos y}).
\end{align*}
\end{lemma}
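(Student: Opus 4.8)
The plan is to mirror the proof of Lemma~\ref{lem:bound-SE1-log}, with the double-exponential counterpart Lemma~\ref{lem:bound-DE-1p-exp} playing the role that Lemma~\ref{lem:bound-SE-1p-exp} played there. First I would peel off the $\log T$ term by the triangle inequality, writing
\[
\left|\log\left(\frac{T}{1+\ee^{-\pi\sinh(x+\ii y)}}\right)\right|
\leq |\log T| + |\log(1+\ee^{-\pi\sinh(x+\ii y)})|,
\]
so that the whole problem reduces to estimating $|\log(1+\ee^{-\pi\sinh\zeta})|$ for $\zeta=x+\ii y$.

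Next I would represent this quantity as a contour integral along the horizontal ray $t\mapsto t+\ii y$, $t\in[x,\infty)$. Because $\Re(\sinh(t+\ii y))=\sinh(t)\cos y\to+\infty$ as $t\to\infty$ (here $\cos y>0$ since $|y|<\pi/2$), the integrand $\log(1+\ee^{-\pi\sinh(t+\ii y)})$ vanishes at the upper limit, and differentiating the antiderivative gives
\[
\log(1+\ee^{-\pi\sinh(x+\ii y)})
= \int_x^{\infty}\frac{\pi\cosh(t+\ii y)}{1+\ee^{\pi\sinh(t+\ii y)}}\,\dd{t}.
\]
Passing to absolute values inside the integral, I would bound the two factors separately. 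Lemma~\ref{lem:bound-DE-1p-exp} controls $|1/(1+\ee^{\pi\sinh(t+\ii y)})|$ by $1/[(1+\ee^{\pi\sinh(t)\cos y})\cos((\pi/2)\sin y)]$, while an elementary computation gives $|\cosh(t+\ii y)|\leq\cosh t$: indeed $|\cosh(t+\ii y)|^2=\cosh^2 t\cos^2 y+\sinh^2 t\sin^2 y=\cos^2 y+\sinh^2 t\leq 1+\sinh^2 t=\cosh^2 t$.

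Finally I would observe that the resulting majorant is, up to the constant factor $1/[\cos((\pi/2)\sin y)\cos y]$, exactly the integrand whose antiderivative produces the desired right-hand side, since
\[
\int_x^{\infty}\frac{\pi\cosh(t)\cos y}{1+\ee^{\pi\sinh(t)\cos y}}\,\dd{t}
= \log(1+\ee^{-\pi\sinh(x)\cos y}).
\]
Assembling these estimates yields $|\log(1+\ee^{-\pi\sinh\zeta})|\leq\frac{1}{\cos((\pi/2)\sin y)\cos y}\log(1+\ee^{-\pi\sinh(x)\cos y})$, and combining with the triangle inequality gives the claim. The only steps demanding genuine care are verifying $|\cosh(t+\ii y)|\leq\cosh t$ (the extra $\cosh$ factor is absent in the SE case and is what forces the additional $1/\cos y$ in the constant) and confirming the vanishing boundary term that legitimizes the integral representation; the remainder is routine.
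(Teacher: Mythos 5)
Your proposal is correct and follows essentially the same route as the paper: triangle inequality to isolate $|\log T|$, the integral representation $\log(1+\ee^{-\pi\sinh(x+\ii y)})=\int_x^{\infty}\pi\cosh(t+\ii y)/(1+\ee^{\pi\sinh(t+\ii y)})\,\dd{t}$, the bound from Lemma~\ref{lem:bound-DE-1p-exp} together with $|\cosh(t+\ii y)|\leq\cosh t$, and explicit evaluation of the resulting integral. The paper leaves the estimate $|\cosh(t+\ii y)|\leq\cosh t$ and the vanishing boundary term implicit, whereas you verify them; both computations are correct.
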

\begin{proof}
First, it holds that
\begin{align*}
\left|
\log\left(\frac{T}{1+\ee^{-\pi\sinh(x + \ii y)}}\right)
\right|
&=   \left|\log T - \log(1 + \ee^{-\pi\sinh(x + \ii y)})\right|\\
&\leq |\log T| + |\log(1 + \ee^{-\pi\sinh(x+\ii y)})|.
\end{align*}
Furthermore, noting
$(\log(1+\ee^{-\pi\sinh\zeta}))'= - \pi\cosh\zeta/(1+\ee^{\pi\sinh\zeta})$
and using Lemma~\ref{lem:bound-DE-1p-exp},
we have
\begin{align*}
 |\log(1+\ee^{-\pi\sinh(x+\ii y)})|
&=
\left|\int_x^{\infty}
\frac{\pi\cosh(t+\ii y)}{1+\ee^{\pi\sinh(t+\ii y)}}\dd{t}\right|\\
&\leq \int_x^{\infty}
\frac{\pi|\cosh(t+\ii y)|}{|1+\ee^{\pi\sinh(t+\ii y)}|}\dd{t}\\
&\leq \int_x^{\infty}
\frac{\pi\cosh t}{(1+\ee^{\pi\sinh(t)\cos y})\cos((\pi/2)\sin y)}\dd{t}\\
&= \frac{1}{\cos((\pi/2)\sin y)\cos y}\log(1+\ee^{-\pi\sinh(x)\cos y}),
\end{align*}
from which we obtain the desired inequality.
\end{proof}

Using these bounds, we show the following lemma.

\begin{lemma}
\label{lem:DE1-discretization}
Let $K$, $\alpha$, $\beta$ and $d$ be positive constants with $d<\pi/2$.
Assume that $f$ is analytic on $\phi_1(\domD_d)$, and
satisfies~\eqref{eq:f-bound-log-algebraic} for all $z\in\phi_1(\domD_d)$.
Let $\mu=\min\{\alpha,\beta\}$. Then, putting
$F(x)=f(\phi_1(x))\phi_1'(x)$, we have~\eqref{eq:discretization-error},
where
\[
\mathcal{N}(F,d)
\leq\frac{2KT^{\alpha+\beta-1}}{\mu\cos^{\alpha+\beta}((\pi/2)\sin d)\cos d}
\left\{2|\log T|+
 \frac{2\log 2 + (1/\mu)}{\cos((\pi/2)\sin d)\cos d}
\right\}.
\]
\end{lemma}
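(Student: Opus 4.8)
The plan is to follow the same three-part strategy as in the proof of Lemma~\ref{lem:SE1-discretization}: it suffices to show that $F\in\mathbf{B}(\domD_d)$, after which the classical theorem quoted above yields the bound~\eqref{eq:discretization-error} on the discretization error. Analyticity of $F$ on $\domD_d$ is clear, since $f(\phi_1(\cdot))$ is analytic on $\domD_d$ by assumption, while $\phi_1(z)=T/(1+\ee^{-\pi\sinh z})$ and hence $\phi_1'$ are analytic on $\domD_{\pi/2}$, and $d<\pi/2$. Thus the two substantive tasks are to verify the vanishing condition~\eqref{eq:F-imag-int-0} and to estimate $\mathcal{N}(F,d)$.

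First I would establish a pointwise bound on $|F(\zeta)|$ for $\zeta=x+\ii y\in\domD_d$, playing the role that~\eqref{eq:bound-F-SE1} plays in the SE case. Using the identities $\phi_1(\zeta)=T/(1+\ee^{-\pi\sinh\zeta})$ and $T-\phi_1(\zeta)=T/(1+\ee^{\pi\sinh\zeta})$ together with $\phi_1'(\zeta)=T\pi\cosh\zeta/\{(1+\ee^{-\pi\sinh\zeta})(1+\ee^{\pi\sinh\zeta})\}$, the hypothesis~\eqref{eq:f-bound-log-algebraic} gives
\[
|F(\zeta)|\leq
\frac{KT^{\alpha+\beta-1}\pi|\cosh\zeta|}{|1+\ee^{-\pi\sinh\zeta}|^{\alpha}|1+\ee^{\pi\sinh\zeta}|^{\beta}}
\left|\log\left(\frac{T}{1+\ee^{-\pi\sinh\zeta}}\right)\right|.
\]
I would then bound the two denominator factors by Lemma~\ref{lem:bound-DE-1p-exp}, the logarithmic factor by Lemma~\ref{lem:bound-DE1-log}, and use $|\cosh\zeta|\leq\cosh x$ (which follows from $|\cosh(x+\ii y)|^2=\cosh^2 x-\sin^2 y$). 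This yields an explicit bound whose $x$-dependence is concentrated in $\cosh x$ and in the factors $(1+\ee^{-\pi\sinh(x)\cos y})$ and $(1+\ee^{\pi\sinh(x)\cos y})$.

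With this bound in hand, the vanishing condition~\eqref{eq:F-imag-int-0} is immediate: for $|y|\leq d<\pi/2$ we have $\cos y>0$, so as $x\to\pm\infty$ one of the factors $\ee^{\mp\pi\sinh(x)\cos y}$ grows double-exponentially and dominates the $\cosh x$ and logarithmic factors uniformly in $y\in[-d,d]$, forcing the inner integral to zero. For $\mathcal{N}(F,d)$ I would, for each fixed $y$, perform the substitution $s=\sinh(x)\cos y$ followed by $w=\pi s$. Up to a factor $\cos^{-1}y$, this converts the $x$-integral into exactly the two integrals $\int_{-\infty}^{\infty}(1+\ee^{-w})^{-\alpha}(1+\ee^{w})^{-\beta}\dd{w}$ and $\int_{-\infty}^{\infty}\log(1+\ee^{-w})(1+\ee^{-w})^{-\alpha}(1+\ee^{w})^{-\beta}\dd{w}$ already estimated in the proof of Lemma~\ref{lem:SE1-discretization} (bounded there by $2/\mu$ and $(2\log 2+1/\mu)/\mu$, respectively); the second integral carries in addition the prefactor $1/\{\cos((\pi/2)\sin y)\cos y\}$ coming from Lemma~\ref{lem:bound-DE1-log}, so it contributes a factor $\cos^{-2}y$ in total. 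Finally, bounding the non-exponential prefactors by $\cos((\pi/2)\sin y)\geq\cos((\pi/2)\sin d)$ and $\cos y\geq\cos d$, using the symmetry of the bound in $\pm y$ to supply the factor $2$, and letting $y\to d-0$ produces the stated estimate. The main obstacle is precisely this last reduction: because Lemmas~\ref{lem:bound-DE-1p-exp} and~\ref{lem:bound-DE1-log} leave the factor $\cos y$ inside the exponentials, one cannot replace $\cos y$ by $\cos d$ before integrating; the substitution $s=\sinh(x)\cos y$ must be carried out at the true value of $y$, and only the resulting non-exponential $\cos y$ and $\cos((\pi/2)\sin y)$ prefactors may then be bounded below, which is exactly what generates the $1/(\cos((\pi/2)\sin d)\cos d)$ and $1/\cos d$ structure of the constant.
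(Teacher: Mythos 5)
Your proposal is correct and follows essentially the same route as the paper: the same pointwise bound on $|F(\zeta)|$ via Lemmas~\ref{lem:bound-DE-1p-exp} and~\ref{lem:bound-DE1-log} together with $|\cosh\zeta|\leq\cosh x$, the same verification that $F\in\mathbf{B}(\domD_d)$, and the same final constant. The only difference is organizational: you substitute $w=\pi\sinh(x)\cos y$ at the true value of $y$ and reuse the SE-case integral estimates, whereas the paper first replaces $\cos y$ by $\cos d$ throughout and then integrates in the DE variables directly; both orderings produce the identical bound.
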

\begin{proof}
It suffices to show that $F\in\mathbf{B}(\domD_d)$.
Because $f(\phi_1(\cdot))$ is analytic on $\domD_d$ and
$\phi_1'$ is analytic on $\domD_{\pi/2}$,
$F$ is analytic on $\domD_d$.
Next, we show~\eqref{eq:F-imag-int-0}.
From~\eqref{eq:f-bound-log-algebraic}
and Lemmas~\ref{lem:bound-DE-1p-exp}
and~\ref{lem:bound-DE1-log}, it holds
for $\zeta=x+\ii y\in\domD_d$ that
\begin{align}
 |F(\zeta)|&\leq
\frac{KT^{\alpha+\beta-1}\pi|\cosh\zeta|}{|1+\ee^{-\pi\sinh\zeta}|^{\alpha}|1+\ee^{\pi\sinh\zeta}|^{\beta}}
\left|\log\left(\frac{T}{1+\ee^{-\pi\sinh\zeta}}\right)\right|\nonumber\\
&\leq
\frac{KT^{\alpha+\beta-1}\pi\cosh x}
{(1+\ee^{-\pi\sinh(x)\cos y})^{\alpha}(1+\ee^{\pi\sinh(x)\cos  y})^{\beta}\cos^{\alpha+\beta}((\pi/2)\sin y)}\nonumber\\
&\quad\cdot
\left\{
|\log T| + \frac{\log(1+\ee^{-\pi\sinh(x)\cos y})}{\cos((\pi/2)\sin y)\cos y}
\right\}\nonumber\\
&\leq \frac{KT^{\alpha+\beta-1}\pi\cosh x}
{(1+\ee^{-\pi\sinh(x)\cos y})^{\mu}(1+\ee^{\pi\sinh(x)\cos  y})^{\mu}\cos^{\alpha+\beta}((\pi/2)\sin y)}\nonumber\\
&\quad\cdot
\left\{
|\log T| + \frac{\log(1+\ee^{-\pi\sinh(x)\cos y})}{\cos((\pi/2)\sin y)\cos y}
\right\},
\label{eq:bound-F-DE1}
\end{align}
where $\mu=\min\{\alpha,\beta\}$ is used
at the last inequality.
Using this inequality~\eqref{eq:bound-F-DE1}, for $x\geq 0$ we have
\begin{align*}
    \int_{-d}^d\left|F(x + \ii y)\right|\dd y
&\leq
\int_{-d}^d
\frac{KT^{\alpha+\beta-1}\pi\cosh x}{(1+\ee^{-\pi\sinh(x)\cos y})^{\mu}(1+\ee^{\pi\sinh(x)\cos y})^{\mu}\cos^{\alpha+\beta}((\pi/2)\sin y)}\\
&\quad\quad\cdot
\left(|\log T| + \frac{\log(1+\ee^{-\pi\sinh(x)\cos y})}{\cos((\pi/2)\sin y)\cos y}\right)\dd{y}\\
&\leq
\frac{KT^{\alpha+\beta-1}\pi\cosh x}{(1+\ee^{-\pi\sinh(x)\cos d})^{\mu}(1+\ee^{\pi\sinh(x)\cos d})^{\mu}\cos^{\alpha+\beta}((\pi/2)\sin d)}\\
&\quad\cdot
\int_{-d}^d
\left(|\log T| + \frac{\log(1+\ee^{-\pi\sinh(0)\cos y})}{\cos((\pi/2)\sin y)\cos y}\right)\dd{y}\\
&\to 0 \cdot
\int_{-d}^d
\left(|\log T| + \frac{\log 2}{\cos((\pi/2)\sin y)\cos y}\right)\dd{y}
\quad (x \to \infty),
\end{align*}
and for $x < 0$,
using $\log(1 + \ee^{-\pi\sinh(x)\cos y})\leq -\pi\sinh(x)\cos(y) + 1$,
we have
\begin{align*}
&     \int_{-d}^d\left|F(x + \ii y)\right|\dd y\\
&\leq\int_{-d}^d\frac{KT^{\alpha+\beta-1}\pi\cosh x\cdot(-\pi\sinh(x)\cos(y) +1)}{(1+\ee^{-\pi\sinh(x)\cos y})^{\mu}(1+\ee^{\pi\sinh(x)\cos y})^{\mu}\cos^{\alpha+\beta}((\pi/2)\sin y)}\\
&\quad\quad\cdot
\left(\frac{|\log T|}{-\pi\sinh(x)\cos(y)+1} + \frac{\log(1+\ee^{-\pi\sinh(x)\cos y})}{\cos((\pi/2)\sin y)(-\pi\sinh(x)\cos y+1)\cos y}\right)\dd{y}\\
&\leq\frac{KT^{\alpha+\beta-1}\pi\cosh x\cdot(-\pi\sinh(x)\cos(0) + 1)}{(1+\ee^{-\pi\sinh(x)\cos d})^{\mu}(1+\ee^{\pi\sinh(x)\cos d})^{\mu}\cos^{\alpha+\beta}((\pi/2)\sin d)}\\
&\quad\cdot
\int_{-d}^d
\left(\frac{|\log T|}{-\pi\sinh(x)\cos(y)+1} + \frac{\log(1+\ee^{-\pi\sinh(x)\cos y})}{\cos((\pi/2)\sin y)(-\pi\sinh(x)\cos(y)+1)\cos y}\right)\dd{y}\\
&\leq\frac{KT^{\alpha+\beta-1}\pi\cosh x\cdot(-\pi\sinh(x) + 1)}{(1+\ee^{-\pi\sinh(x)\cos d})^{\mu}(1+\ee^{\pi\sinh(x)\cos d})^{\mu}\cos^{\alpha+\beta}((\pi/2)\sin d)}\\
&\quad\cdot
\int_{-d}^d
\left(\frac{|\log T|}{-\pi\sinh(0)\cos(y)+1} + \frac{1}{\cos((\pi/2)\sin y)\cos y}\right)\dd{y}\\
&\to 0 \cdot
\int_{-d}^d
\left(|\log T| + \frac{1}{\cos((\pi/2)\sin y)\cos y}\right)\dd{y}
\quad (x \to -\infty),
\end{align*}
which shows~\eqref{eq:F-imag-int-0}.
Finally, we estimate $\mathcal{N}(F,d)$.
Using~\eqref{eq:bound-F-DE1},
we have
\begin{align*}
&\int_{-\infty}^{\infty} \left\{|F(x + \ii y)|+|F(x - \ii y)|\right\}\dd x\\
&\leq\int_{-d}^d
\frac{KT^{\alpha+\beta-1}\pi\cosh x}{(1+\ee^{-\pi\sinh(x)\cos y})^{\mu}(1+\ee^{\pi\sinh(x)\cos y})^{\mu}\cos^{\alpha+\beta}((\pi/2)\sin y)}\\
&\quad\quad\cdot
\left(|\log T| + \frac{\log(1+\ee^{-\pi\sinh(x)\cos y})}{\cos((\pi/2)\sin y)\cos y}\right)\dd{x}\\
&\quad+\int_{-d}^d
\frac{KT^{\alpha+\beta-1}\pi\cosh x}{(1+\ee^{-\pi\sinh(x)\cos(-y)})^{\mu}(1+\ee^{\pi\sinh(x)\cos(-y)})^{\mu}\cos^{\alpha+\beta}((\pi/2)\sin(-y))}\\
&\quad\quad\cdot
\left(|\log T| + \frac{\log(1+\ee^{-\pi\sinh(x)\cos(-y)})}{\cos((\pi/2)\sin(-y))\cos y}\right)\dd{x}\\
&\leq\frac{2KT^{\alpha+\beta-1}}{\cos^{\alpha+\beta}((\pi/2)\sin d)}
\int_{-\infty}^{\infty}
\frac{\pi\cosh x}{(1+\ee^{-\pi\sinh(x)\cos d})^{\mu}(1+\ee^{\pi\sinh(x)\cos d})^{\mu}}\\
&\qquad\qquad\qquad\qquad\qquad\cdot
\left(|\log T| + \frac{\log(1+\ee^{-\pi\sinh(x)\cos d})}{\cos((\pi/2)\sin d)\cos d}\right)\dd{x},
\end{align*}
where $\cos d\leq\cos(\pm y)$ and
$\cos((\pi/2)\sin d)\leq \cos((\pi/2)\sin(\pm y))$ are used,
which hold for $y\in [-d,d]$ (note that $d<\pi/2$).
Therefore, the inequality remains valid
when taking the limit $y\to d - 0$.
For the first term of the integral, we have
\begin{align*}
&\int_{-\infty}^{\infty}
 \frac{|\log T|\pi\cosh x}{(1+\ee^{-\pi\sinh(x)\cos d})^{\mu}(1+\ee^{\pi\sinh(x)\cos d})^{\mu}}\dd{x}\\
&=2 \int_0^{\infty}
 \frac{|\log T|\pi\cosh x}{(1+\ee^{-\pi\sinh(x)\cos d})^{2\mu}}
\ee^{-\pi\mu\sinh(x)\cos d}\dd{x}\\
&\leq 2 \int_0^{\infty}
\frac{|\log T|\pi\cosh x}{(1+0)^{2\mu}}
\ee^{-\pi\mu\sinh(x)\cos d}\dd{x}\\
&=\frac{2}{\mu\cos d}|\log T|.
\end{align*}
For the second term of the integral, we have
\begin{align*}
&\frac{1}{\cos((\pi/2)\sin d)\cos d}
\int_{-\infty}^{\infty}
 \frac{\log(1+\ee^{-\pi\sinh(x)\cos d})\pi\cosh x}{(1+\ee^{-\pi\sinh(x)\cos d})^{\mu}(1+\ee^{\pi\sinh(x)\cos d})^{\mu}}\dd{x}\\
&=\frac{1}{\cos((\pi/2)\sin d)\cos d}\int_{-\infty}^{0}
\frac{\left\{-\pi\sinh(x)\cos d + \log(1+\ee^{\pi\sinh(x)\cos d})\right\}\pi\cosh x}{(1+\ee^{\pi\sinh(x)\cos d})^{2\mu}\ee^{-\pi\mu\sinh(x)\cos d}}\dd{x}\\
&\quad+
\frac{1}{\cos((\pi/2)\sin d)\cos d}\int_{0}^{\infty}
\frac{\log(1+\ee^{-\pi\sinh(x)\cos d})\pi\cosh x}{(1+\ee^{-\pi\mu\sinh(x)\cos d})^{2\mu}\ee^{\pi\mu\sinh(x)\cos d}}\dd{x}\\
&\leq\frac{1}{\cos((\pi/2)\sin d)\cos d}\int_{-\infty}^{0}
\frac{\left\{-\pi\sinh(x)\cos d + \log(1+\ee^{\pi\sinh(0)\cos d})\right\}\pi\cosh x}{(1+0)^{2\mu}\ee^{-\pi\mu\sinh(x)\cos d}}\dd{x}\\
&\quad+
\frac{1}{\cos((\pi/2)\sin d)\cos d}\int_{0}^{\infty}
\frac{\log(1+\ee^{-\pi\sinh(0)\cos d})\pi\cosh x}{(1+0)^{2\mu}\ee^{\pi\mu\sinh(x)\cos d}}\dd{x}\\
&=\frac{1}{\mu\cos((\pi/2)\sin d)\cos^2 d}
\left(\log 2 + \frac{1}{\mu}\right)
+
\frac{\log 2}{\mu\cos((\pi/2)\sin d)\cos^2 d}.
\end{align*}
Thus, we obtain the conclusion.
\end{proof}

Next, we estimate the truncation error.
For this purpose, we use the following result.

\begin{proposition}
\label{prop:DE1-monotone}
Let $\alpha$ and $\beta$ be positive constants.
Let $G_{-}$, $G_{+}$,
$\tilde{G}_{-}$, and $\tilde{G}_{+}$ be defined by
\begin{align*}
 G_{-}(x)&= - \sinh(x)\cosh(x)\ee^{\pi\alpha\sinh x},\\
 G_{+}(x)&=\sinh(x)\cosh(x)\ee^{-\pi\beta\sinh x},\\
\tilde{G}_{-}(x)&=\cosh(x)\ee^{\pi\alpha\sinh x},\\
\tilde{G}_{+}(x)&=\cosh(x)\ee^{-\pi\beta\sinh x}.
\end{align*}
Then, $G_{-}$
and $\tilde{G}_{-}$
monotonically increase for $x\leq -\arsinh(2/(\pi\alpha))$,
and $G_{+}$ and $\tilde{G}_{+}$
monotonically decrease for $x\geq \arsinh(2/(\pi\beta))$.
\end{proposition}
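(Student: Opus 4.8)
The plan is to prove each of the four monotonicity statements by computing the derivative and determining its sign. First I would cut the work in half using the parity of $\sinh$ and $\cosh$: since $\cosh$ is even and $\sinh$ is odd, the substitution $x\mapsto -x$ shows directly that $\tilde{G}_{-}(x)=\tilde{G}_{+}(-x)$ and $G_{-}(x)=G_{+}(-x)$, provided $\beta$ is replaced by $\alpha$ in the right-hand functions. Under this reflection, the assertion ``$\tilde{G}_{+}$ (with parameter $\alpha$) decreases for $x\geq\arsinh(2/(\pi\alpha))$'' becomes exactly ``$\tilde{G}_{-}$ increases for $x\leq-\arsinh(2/(\pi\alpha))$'', and likewise for the pair $G_{\pm}$. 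Hence it suffices to establish the two ``$+$'' claims.

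For $\tilde{G}_{+}$ I would differentiate to obtain
\[
 \tilde{G}_{+}'(x)=\ee^{-\pi\beta\sinh x}\left(\sinh x-\pi\beta\cosh^2 x\right).
\]
Since the exponential factor is positive, the sign of $\tilde{G}_{+}'$ is that of the bracket. Writing $s=\sinh x$ and using $\cosh^2 x=1+s^2$, nonpositivity of the bracket is equivalent to the quadratic inequality $\pi\beta s^2-s+\pi\beta\geq 0$. This parabola opens upward with vertex at $s=1/(2\pi\beta)$; evaluating at the threshold value $s_0=2/(\pi\beta)$ yields $2/(\pi\beta)+\pi\beta>0$, and because $s_0$ lies to the right of the vertex, the parabola stays positive for every $s\geq s_0$, i.e.\ for every $x\geq\arsinh(2/(\pi\beta))$.

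For $G_{+}$ the analogous computation gives
\[
 G_{+}'(x)=\ee^{-\pi\beta\sinh x}\left(\cosh^2 x+\sinh^2 x-\pi\beta\sinh x\cosh^2 x\right),
\]
so again only the sign of the bracket matters. With $s=\sinh x\geq 0$ this reduces to $\pi\beta(s+s^3)\geq 2s^2+1$, that is, to showing that the cubic $g(s)=\pi\beta s^3-2s^2+\pi\beta s-1$ is nonnegative for $s\geq 2/(\pi\beta)$. I would check $g(2/(\pi\beta))=1>0$ directly, and then argue that $g$ is increasing on the relevant range: its derivative $g'(s)=3\pi\beta s^2-4s+\pi\beta$ has vertex at $s=2/(3\pi\beta)$, which lies to the left of the threshold, while $g'(2/(\pi\beta))=4/(\pi\beta)+\pi\beta>0$, so $g'>0$ throughout $s\geq 2/(\pi\beta)$.

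The main obstacle is this last cubic inequality for $G_{+}$: in contrast to the quadratic case, positivity at the threshold does not immediately propagate, so one must separately verify that $g$ (equivalently $g'$) remains positive beyond $2/(\pi\beta)$. Once this is in hand, the remaining cases follow by the reflection argument, and the proposition is complete.
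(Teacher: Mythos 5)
Your proof is correct, and the high-level plan (differentiate and determine the sign) is of course shared with the paper, but the way you establish the signs is genuinely different in both halves. For $G_{+}$ the paper avoids your cubic entirely: from $\pi\beta\sinh x\geq 2$ it gets $1-\pi\beta\sinh x\leq -1<-\tanh^2 x$, multiplies by $\cosh^2 x$, and concludes $\sinh^2 x+(1-\pi\beta\sinh x)\cosh^2 x<0$ in one line; your route instead reduces to $g(s)=\pi\beta s^3-2s^2+\pi\beta s-1\geq 0$ for $s\geq 2/(\pi\beta)$ and needs the extra step of checking $g'>0$ there, which you correctly identify and carry out ($g(2/(\pi\beta))=1$ and $g'(2/(\pi\beta))=4/(\pi\beta)+\pi\beta>0$ with the vertex of $g'$ at $2/(3\pi\beta)$ to the left of the threshold). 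The paper's trick is shorter and makes transparent why the threshold is exactly $2/(\pi\beta)$, while yours is more mechanical but requires no inspiration. For $\tilde{G}_{\pm}$ the situation is reversed: the paper splits into the cases $\beta\geq 1/(2\pi)$ and $\beta<1/(2\pi)$ and computes the roots of the quadratic explicitly, whereas your observation that the quadratic $\pi\beta s^2-s+\pi\beta$ is positive at $s_0=2/(\pi\beta)$ and that $s_0$ lies to the right of the vertex $1/(2\pi\beta)$ dispatches both cases at once and is arguably cleaner. Your reflection argument $G_{-}(x)=G_{+}(-x)$, $\tilde{G}_{-}(x)=\tilde{G}_{+}(-x)$ (with $\beta$ replaced by $\alpha$) is valid and halves the work; the paper simply runs the symmetric computation twice.
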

\begin{proof}
From $x\leq -\arsinh(2/(\pi\alpha))$,
$\pi\alpha\sinh x\leq -2$ holds,
from which we have
\[
 1 + \pi\alpha\sinh x\leq -1 < -\frac{\sinh^2 x}{\cosh^2 x}.
\]
From the inequality,
$\sinh^2 x + (1 + \pi\alpha\sinh x)\cosh^2 x < 0$ holds,
from which we have
\[
 G_{-}'(x) = -\left\{
\sinh^2 x + (1 + \pi\alpha\sinh x)\cosh^2 x
\right\}\ee^{\pi\alpha\sinh x}
> 0.
\]
Thus, the claim on the function $G_{-}$ follows.
Similarly, from $x\geq \arsinh(2/(\pi\beta))$,
$\pi\beta\sinh x\geq 2$ holds,
from which we have
\[
 1 - \pi\beta\sinh x\leq -1 < -\frac{\sinh^2 x}{\cosh^2 x}.
\]
From the inequality,
$\sinh^2 x + (1 - \pi\beta\sinh x)\cosh^2 x < 0$ holds,
from which we have
\[
 G_{+}'(x) = \left\{
\sinh^2 x + (1 - \pi\beta\sinh x)\cosh^2 x
\right\}\ee^{-\pi\beta\sinh x}
< 0.
\]
Thus, the claim on the function $G_{+}$ follows.

Next, we consider $\tilde{G}_{-}$ and $\tilde{G}_{+}$.
If $\alpha\geq 1/(2\pi)$ and $\beta\geq 1/(2\pi)$,
then $\tilde{G}_{-}$ monotonically increases for $x\leq 0$
and $\tilde{G}_{+}$ monotonically decreases for $x\geq 0$.
Therefore, in this case, it is also true that
$\tilde{G}_{-}$ monotonically increases for $x\leq -\arsinh(2/(\pi\alpha))$
and $\tilde{G}_{+}$ monotonically decreases for
$x\geq \arsinh(2/(\pi\beta))$.
If $\alpha< 1/(2\pi)$ and $\beta< 1/(2\pi)$,
then $\tilde{G}_{-}$ monotonically increases for $x\leq -x_{\alpha}$
and $\tilde{G}_{+}$ monotonically decreases for $x\geq x_{\beta}$,
where $x_{\gamma}$ is defined for $\gamma\in (0,1/(2\pi))$ by
\[
 x_{\gamma} =\arsinh\left(\frac{1+\sqrt{1-(2\pi\gamma)^2}}{2\pi\gamma}\right).
\]
Note that
\[
 x_{\gamma}\leq
 \arsinh\left(\frac{1+\sqrt{1 - 0^2}}{2\pi\gamma}\right)
=\arsinh\left(\frac{1}{\pi\gamma}\right)
<\arsinh\left(\frac{2}{\pi\gamma}\right).
\]
Therefore, in this case as well,
$\tilde{G}_{-}$ monotonically increases for $x\leq -\arsinh(2/(\pi\alpha))$
and $\tilde{G}_{+}$ monotonically decreases for
$x\geq \arsinh(2/(\pi\beta))$.
Thus, the claim on the functions $\tilde{G}_{-}$ and $\tilde{G}_{+}$ follows.
\end{proof}

We bound the truncation error as follows.

\begin{lemma}
\label{lem:DE1-truncation}
Let $K$, $\alpha$ and $\beta$ be positive constants.
Assume that $f$ satisfies~\eqref{eq:f-bound-log-algebraic}
for all $z\in (0, T)$.
Let $\mu=\min\{\alpha,\beta\}$,
let $n$ be positive integer,
and
let $M$ and $N$ be selected by~\eqref{eq:def-MN-DE}.
Let $Mh\geq \arsinh(2/(\pi\alpha))$ and
$Nh\geq \arsinh(2/(\pi\beta))$ be satisfied.
Then, putting $F(x)=f(\phi_1(x))\phi_1'(x)$, we have
\begin{align*}
&\left|
h\sum_{k=-\infty}^{-M-1}F(kh)
+h\sum_{k=N+1}^{\infty}F(kh)
\right|\\
&\leq \frac{KT^{\alpha+\beta-1}}{\mu}
\left\{2|\log T|+ 2\log 2 + \frac{1}{\mu}
+\pi\sinh(Mh)
\right\}\ee^{-\pi \mu q(2dn/\mu)},
\end{align*}
where $q(x)=x/\arsinh x$.
\end{lemma}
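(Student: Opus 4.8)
The plan is to follow the same route as the proof of Lemma~\ref{lem:SE1-truncation}, replacing the SE building blocks by their DE counterparts. First I would restrict the bound~\eqref{eq:bound-F-DE1} to the real axis (set $y=0$, so that $\cos((\pi/2)\sin y)=\cos y = 1$) and invoke Lemma~\ref{lem:bound-DE1-log} with $y=0$ to obtain, for real $x$,
\[
 |F(x)| \leq \frac{KT^{\alpha+\beta-1}\pi\cosh x}{(1+\ee^{-\pi\sinh x})^{\alpha}(1+\ee^{\pi\sinh x})^{\beta}}\left\{|\log T| + \log(1+\ee^{-\pi\sinh x})\right\}.
\]
Then I would split the truncation error into the left tail $\sum_{k\leq -M-1}$ and the right tail $\sum_{k\geq N+1}$ and treat them separately.

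Next comes the algebraic rewriting, which mirrors the SE case. Using $1+\ee^{-\pi\sinh x} = \ee^{-\pi\sinh x}(1+\ee^{\pi\sinh x})$ I would factor the left-tail summand so that $1/(1+\ee^{-\pi\sinh x})^{\alpha}$ becomes $\ee^{\pi\alpha\sinh x}/(1+\ee^{\pi\sinh x})^{\alpha}$, and rewrite $\log(1+\ee^{-\pi\sinh x}) = -\pi\sinh x + \log(1+\ee^{\pi\sinh x})$; for the right tail I would instead pull out $\ee^{-\pi\beta\sinh x}$. Discarding the denominators (each factor $1+\ee^{\pm\pi\sinh x}\geq 1$) and bounding the inner logarithm by $\log 2$ on the region where the corresponding exponential is below one, the left-tail summand is dominated by $KT^{\alpha+\beta-1}\pi\cosh(x)\ee^{\pi\alpha\sinh x}\{|\log T|+\log 2 -\pi\sinh x\}$ and the right-tail summand by $KT^{\alpha+\beta-1}\pi(|\log T|+\log 2)\cosh(x)\ee^{-\pi\beta\sinh x}$. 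The point of this rearrangement is that these expressions are exactly linear combinations of the functions $\tilde{G}_{-}$, $G_{-}$ and $\tilde{G}_{+}$ appearing in Proposition~\ref{prop:DE1-monotone}.

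With the summands expressed through the functions of Proposition~\ref{prop:DE1-monotone}, the hypotheses $Mh\geq\arsinh(2/(\pi\alpha))$ and $Nh\geq\arsinh(2/(\pi\beta))$ guarantee that every sampled point $kh$ with $k\leq -M-1$ (resp.\ $k\geq N+1$) lies inside the interval of monotone increase (resp.\ decrease). I would therefore dominate each tail sum by the corresponding integral, $h\sum_{k\leq -M-1}(\cdots)\leq\int_{-\infty}^{-Mh}(\cdots)\dd{x}$ and $h\sum_{k\geq N+1}(\cdots)\leq\int_{Nh}^{\infty}(\cdots)\dd{x}$, and evaluate both integrals by the substitution $u=\sinh x$, which reduces them to elementary integrals of $\ee^{\pi\alpha u}$, $u\ee^{\pi\alpha u}$ and $\ee^{-\pi\beta u}$. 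This yields the left tail $\leq (KT^{\alpha+\beta-1}/\alpha)\{|\log T|+\log 2+\pi\sinh(Mh)+1/\alpha\}\ee^{-\pi\alpha\sinh(Mh)}$ and the right tail $\leq (KT^{\alpha+\beta-1}/\beta)\{|\log T|+\log 2\}\ee^{-\pi\beta\sinh(Nh)}$; replacing $1/\alpha$ and $1/\beta$ by $1/\mu$ (legitimate since $\alpha,\beta\geq\mu$) brings out the common prefactor $KT^{\alpha+\beta-1}/\mu$.

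Finally I would match the exponential rate. By the ceiling in~\eqref{eq:def-MN-DE} we have $Mh\geq\arsinh((\mu/\alpha)q(2dn/\mu))$, hence $\sinh(Mh)\geq(\mu/\alpha)q(2dn/\mu)$ and $\pi\alpha\sinh(Mh)\geq\pi\mu q(2dn/\mu)$, so $\ee^{-\pi\alpha\sinh(Mh)}\leq\ee^{-\pi\mu q(2dn/\mu)}$; the same argument with $\beta$ and $N$ gives $\ee^{-\pi\beta\sinh(Nh)}\leq\ee^{-\pi\mu q(2dn/\mu)}$. Adding the two tails and collecting the bracketed terms produces exactly $\frac{KT^{\alpha+\beta-1}}{\mu}\{2|\log T|+2\log 2+1/\mu+\pi\sinh(Mh)\}\ee^{-\pi\mu q(2dn/\mu)}$, as claimed. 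I expect the main obstacle to be the bookkeeping in the rewriting step: one must factor the left-tail summand precisely so that the surviving $-\pi\sinh x$ term is absorbed into $G_{-}$ while the constant part becomes $\tilde{G}_{-}$, so that Proposition~\ref{prop:DE1-monotone} applies to each piece and the sum-to-integral comparison is valid over the whole tail. The integral evaluation itself and the exponent matching are then routine.
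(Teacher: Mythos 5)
Your proposal is correct and follows essentially the same route as the paper's proof: the same restriction of the bound \eqref{eq:bound-F-DE1} to $y=0$, the same factorization of the tail summands into the monotone functions of Proposition~\ref{prop:DE1-monotone}, the same sum-to-integral comparison over $(-\infty,-Mh]$ and $[Nh,\infty)$, and the same exponent matching $\alpha\sinh(Mh)\geq\mu q(2dn/\mu)$, $\beta\sinh(Nh)\geq\mu q(2dn/\mu)$ via the ceiling in \eqref{eq:def-MN-DE}. The resulting tail bounds and their combination agree exactly with the paper's.
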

\begin{proof}
From~\eqref{eq:f-bound-log-algebraic}, it holds that
\begin{align*}
 |F(x)|&\leq
\frac{KT^{\alpha+\beta-1}\pi\cosh x}{(1+\ee^{-\pi\sinh x})^{\alpha}(1+\ee^{\pi\sinh x})^{\beta}}
\left|\log\left(\frac{T}{1+\ee^{-\pi\sinh x}}\right)\right|\\
&\leq
\frac{KT^{\alpha+\beta-1}\pi\cosh x}{(1+\ee^{-\pi\sinh x})^{\alpha}(1+\ee^{\pi\sinh x})^{\beta}}
\left\{|\log T| + \log(1 +\ee^{-\pi\sinh x})\right\}.
\end{align*}
Using this inequality, we have
\begin{align*}
&\left|
h\sum_{k=-\infty}^{-M-1}F(kh)
+h\sum_{k=N+1}^{\infty}F(kh)
\right|\\
&\leq h\sum_{k=-\infty}^{-M-1}
\frac{KT^{\alpha+\beta-1}\pi\cosh(kh)}{(1+\ee^{-\pi\sinh(kh)})^{\alpha}(1+\ee^{\pi\sinh(kh)})^{\beta}}
\left\{|\log T| + \log(1+\ee^{-\pi\sinh(kh)})\right\}\\
&\quad + h\sum_{k=N+1}^{\infty}
\frac{KT^{\alpha+\beta-1}\pi\cosh(kh)}{(1+\ee^{-\pi\sinh(kh)})^{\alpha}(1+\ee^{\pi\sinh(kh)})^{\beta}}
\left\{|\log T| + \log(1+\ee^{-\pi\sinh(kh)})\right\}\\
& =h\sum_{k=-\infty}^{-M-1}
\frac{KT^{\alpha+\beta-1}\pi\cosh(kh)\left\{|\log T| + (-\pi\sinh(kh)) + \log(1+\ee^{\pi\sinh(kh)})\right\}}{(1+\ee^{\pi\sinh(kh)})^{\alpha+\beta}\ee^{-\pi\alpha\sinh(k h)}}
\\
&\quad + h\sum_{k=N+1}^{\infty}
\frac{KT^{\alpha+\beta-1}\pi\cosh(kh)\left\{|\log T| + \log(1+\ee^{-\pi\sinh(kh)})\right\}}{(1+\ee^{-\pi\sinh(kh)})^{\alpha+\beta}\ee^{\pi\beta\sinh(k h)}}
\\
&\leq h\sum_{k=-\infty}^{-M-1}
\frac{KT^{\alpha+\beta-1}\pi\cosh(kh)\left\{|\log T| + (-\pi\sinh(kh)) + \log(1+\ee^{\pi\sinh(0)})\right\}}{(1+0)^{\alpha+\beta}\ee^{-\pi\alpha\sinh(k h)}}
\\
&\quad + h\sum_{k=N+1}^{\infty}
\frac{KT^{\alpha+\beta-1}\pi\cosh(kh)\left\{|\log T| + \log(1+\ee^{-\pi\sinh(0)})\right\}}{(1+0)^{\alpha+\beta}\ee^{\pi\beta\sinh(k h)}}.
\end{align*}
Using Proposition~\ref{prop:DE1-monotone},
for $Mh\geq \arsinh(2/(\pi\alpha))$, we bound the first sum as
\begin{align*}
&h\sum_{k=-\infty}^{-M-1}
KT^{\alpha+\beta-1}\pi\cosh(kh)\left\{|\log T| + (-\pi\sinh(kh)) + \log 2)\right\}\ee^{\pi\alpha\sinh(k h)}
\\
&\leq KT^{\alpha+\beta-1}\int_{-\infty}^{-Mh}\pi\cosh(x)
\left\{|\log T| + (-\pi\sinh x) + \log 2)\right\}\ee^{\pi\alpha\sinh x}\dd{x}\\
&=\frac{KT^{\alpha+\beta-1}}{\alpha}
\left(|\log T| + \pi\sinh(Mh) + \log 2 + \frac{1}{\alpha} \right)
\ee^{-\pi\alpha\sinh(M h)}\\
&\leq\frac{KT^{\alpha+\beta-1}}{\mu}
\left(|\log T| + \pi\sinh(Mh) + \log 2 + \frac{1}{\mu} \right)
\ee^{-\pi\alpha\sinh(M h)},
\end{align*}
where $\mu=\min\{\alpha,\beta\}$ is used
at the last inequality.
Similarly, for $Nh\geq \arsinh(1/(2\beta))$,
we bound the second term as
\begin{align*}
& h\sum_{k=N+1}^{\infty}
KT^{\alpha+\beta-1}\pi\cosh(kh) \left(|\log T| + \log 2\right)
\ee^{-\pi\beta\sinh(k h)}\\
&\leq KT^{\alpha+\beta-1}\left(|\log T| + \log 2\right)
\int_{Nh}^{\infty}\pi\cosh x \ee^{-\pi\beta\sinh x}\dd{x}\\
& = KT^{\alpha+\beta-1}\left(|\log T| + \log 2\right)
\cdot\frac{1}{\beta}\ee^{-\pi\beta\sinh(N h)}\\
&\leq KT^{\alpha+\beta-1}\left(|\log T| + \log 2\right)
\cdot\frac{1}{\mu}\ee^{-\pi\beta\sinh(N h)}.
\end{align*}
The final task to obtain the conclusion is
showing $\ee^{-\pi\alpha\sinh(M h)}\leq \ee^{-\pi\mu q(2 d n/\mu)}$
and $\ee^{-\pi\beta\sinh(N h)}\leq \ee^{-\pi\mu q(2 d n/\mu)}$.
Using~\eqref{eq:def-MN-DE},
we have $\ee^{-\pi\alpha\sinh(M h)}\leq \ee^{-\pi\mu q(2 d n/\mu)}$
because
\begin{align*}
 \alpha\sinh(Mh)
&=\alpha\sinh\left(\left\lceil
\frac{1}{h}\arsinh\left(\frac{\mu}{\alpha}q\left(\frac{2dn}{\mu}\right)\right)
\right\rceil h
\right)\\
&\geq\alpha\sinh\left(
\frac{1}{h}\arsinh\left(\frac{\mu}{\alpha}q\left(\frac{2dn}{\mu}\right)\right)
h
\right)\\
&=\alpha\left(\frac{\mu}{\alpha}q\left(\frac{2dn}{\mu}\right)\right)\\
&=\mu q\left(\frac{2dn}{\mu}\right).
\end{align*}
In the same manner, using~\eqref{eq:def-MN-DE},
we have $\ee^{-\pi\beta\sinh(N h)}\leq \ee^{-\pi\mu q(2 d n/\mu)}$
because $\beta\sinh(Nh)\geq \mu q(2 d n/\mu)$,
which completes the proof.
\end{proof}

We are now in a position to prove Theorem~\ref{thm:new-DE1}.
Note that if $h\leq \pi d$,
then with $h$ selected by~\eqref{eq:improve-h-DE},
it holds that
\begin{equation}
 Mh
\geq \arsinh\left(\frac{\mu}{\alpha}q\left(\frac{2 d n}{\mu}\right)\right)
=\arsinh\left(\frac{2d}{\alpha h}\right)\geq
\arsinh\left(\frac{2}{\pi\alpha}\right).
\label{eq:Mh-bounded-by-arsinh}
\end{equation}
Similarly, it holds that
\begin{equation}
 Nh\geq \arsinh\left(\frac{\mu}{\beta}q\left(\frac{2 d n}{\mu}\right)\right)
=\arsinh\left(\frac{2d}{\beta h}\right)\geq
\arsinh\left(\frac{2}{\pi\beta}\right).
\label{eq:Nh-bounded-by-arsinh}
\end{equation}
Furthermore, if $n\geq \mu\sinh(1)/(2d)$,
i.e., if $\arsinh(2dn/\mu)\geq 1$,
then with $h$ selected by~\eqref{eq:improve-h-DE},
it holds that
\begin{align}
M &= \left\lceil
\frac{1}{h}\arsinh\left(\frac{2 d n/\alpha}{\arsinh(2 d n/\mu)}\right)
\right\rceil\nonumber\\
&=\left\lceil
\frac{n}{\arsinh(2 d n/\mu)}\arsinh\left(\frac{2 d n/\alpha}{\arsinh(2 d n/\mu)}\right)
\right\rceil\nonumber\\
&\leq\left\lceil
\frac{n}{\arsinh(2 d n/\mu)}\arsinh\left(\frac{2 d n/\alpha}{1}\right)
\right\rceil\nonumber\\
&\leq\left\lceil
\frac{n}{\arsinh(2 d n/\mu)}\arsinh\left(2 d n/\mu\right)
\right\rceil\nonumber\\
&=n,\label{eq:M-leq-n}
\end{align}
from which the truncation error
in Lemma~\ref{lem:DE1-truncation} is further bounded as
\begin{align*}
&\frac{KT^{\alpha+\beta-1}}{\mu}
\left\{2|\log T|+ 2\log 2 + \frac{1}{\mu}
+\pi\sinh(Mh)
\right\}\ee^{-\pi \mu q(2dn/\mu)}\\
&\leq
 \frac{KT^{\alpha+\beta-1}}{\mu}
\left\{2|\log T|+ 2\log 2 + \frac{1}{\mu}
+\pi\sinh(nh)
\right\}\ee^{-\pi \mu q(2dn/\mu)}.
\end{align*}
Therefore,
from Lemmas~\ref{lem:DE1-discretization}
and~\ref{lem:DE1-truncation},
substituting~\eqref{eq:improve-h-DE} into $h$,
we have
\begin{align*}
\left|
\int_{-\infty}^{\infty}F(x)\dd{x}
- h \sum_{k=-M}^N F(kh)
\right| \leq C(n) n\ee^{-\pi\mu q(2 d n/\mu)},
\end{align*}
where
\[
 C(n)
=\frac{K T^{\alpha+\beta-1}}{\mu}
\left[
\frac{c_d^{\alpha+\beta}\left(4|\log T|\cos d + 2 l_{\mu} c_d\right)}{n(1 - \ee^{-\pi\mu q(2dn/\mu)})\cos^{2} d}
+\frac{2|\log T| + l_{\mu}}{n}
+\frac{2\pi d}{\mu}
\right],
\]
where $c_d = 1/\cos((\pi/2)\sin d)$ and $l_{\mu}=2\log 2 + (1/\mu)$.
Furthermore, $C(n)\leq C(1)$ holds, which completes the proof of
Theorem~\ref{thm:new-DE1}.

\subsection{In the case of the semi-infinite interval}
\label{sec:proof-semi-infinite}

Here, we prove theorems in the case of the semi-infinite interval,
i.e., Theorems~\ref{thm:new-SE2}--\ref{thm:new-DE3}.
The strategy to analyze the error is the same as above,
i.e., we estimate both the discretization and truncation errors.

\subsubsection{Proof of Theorem~\ref{thm:new-SE2}}

For Theorem~\ref{thm:new-SE2},
we bound the discretization error as follows.

\begin{lemma}
\label{lem:SE2-discretization}
Let $K$, $\alpha$, $\beta$ and $d$ be positive constants with $d<\pi/2$.
Assume that $f$ is analytic on $\psi_2(\domD_d)$, and
satisfies~\eqref{eq:f-bound-log-algebraic-2} for all $z\in\psi_2(\domD_d)$.
Let $\mu=\min\{\alpha,\beta\}$. Then, putting
$F(x)=f(\psi_2(x))\psi_2'(x)$, we have~\eqref{eq:discretization-error},
where
\[
\mathcal{N}(F,d)
\leq \frac{4K}{\mu\cos^{(\alpha+\beta)/2}(d)}\left(d + \frac{1}{\mu}\right).
\]
\end{lemma}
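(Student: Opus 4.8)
The plan is to follow the same three-step template used in Lemma~\ref{lem:SE1-discretization}: first establish analyticity of $F$ on $\domD_d$, then verify the decay condition~\eqref{eq:F-imag-int-0}, and finally estimate $\mathcal{N}(F,d)$ so that the discretization error bound~\eqref{eq:discretization-error} applies. Analyticity is immediate, since $f(\psi_2(\cdot))$ is analytic on $\domD_d$ by assumption and $\psi_2'(\zeta)=\ee^{\zeta}$ is entire.

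The crucial first computation is a pointwise bound on $|F(\zeta)|$ for $\zeta=x+\ii y\in\domD_d$. Writing $z=\psi_2(\zeta)=\ee^{\zeta}$, I would use three facts: $|z|=\ee^x$ and $|\psi_2'(\zeta)|=\ee^x$; the identity $\log z=\zeta$ on the strip $|y|<\pi$, which gives $|\log z|=\sqrt{x^2+y^2}$ \emph{without} changing the divergence rate; and a lower bound for $|1+z^2|$. For the last, note $z^2=\ee^{2\zeta}$, so setting $u=2x$ and $v=2y$ and applying Lemma~\ref{lem:bound-SE-1p-exp} (valid because $|v|=2|y|<\pi$ as $d<\pi/2$) yields $1/|1+z^2|\leq \frac{1}{(1+\ee^{2x})\cos y}$. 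Substituting these into~\eqref{eq:f-bound-log-algebraic-2} and cancelling $\ee^{(\alpha-1)x}\cdot\ee^{x}=\ee^{\alpha x}$ gives
\[
|F(\zeta)|\leq \frac{K\ee^{\alpha x}\sqrt{x^2+y^2}}{(1+\ee^{2x})^{(\alpha+\beta)/2}\cos^{(\alpha+\beta)/2}y}.
\]

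With this bound, condition~\eqref{eq:F-imag-int-0} follows because the factor $\ee^{\alpha x}/(1+\ee^{2x})^{(\alpha+\beta)/2}$ behaves like $\ee^{-\beta x}$ as $x\to+\infty$ and like $\ee^{\alpha x}$ as $x\to-\infty$, so its exponential decay dominates the polynomial growth of $\sqrt{x^2+y^2}$ uniformly for $y\in[-d,d]$. To estimate $\mathcal{N}(F,d)$, I would use $\cos y\geq\cos d$ on $[-d,d]$ together with $\sqrt{x^2+y^2}\leq |x|+d$, so that the $+\ii y$ and $-\ii y$ contributions share the same bound and the limit $y\to d-0$ can be taken, giving
\[
\mathcal{N}(F,d)\leq \frac{2K}{\cos^{(\alpha+\beta)/2}d}\int_{-\infty}^{\infty}\frac{\ee^{\alpha x}(|x|+d)}{(1+\ee^{2x})^{(\alpha+\beta)/2}}\dd{x}.
\]
Splitting this integral at $x=0$ and using $(1+\ee^{2x})^{(\alpha+\beta)/2}\geq \ee^{(\alpha+\beta)x}$ on $x\geq0$ and $(1+\ee^{2x})^{(\alpha+\beta)/2}\geq 1$ on $x\leq0$ reduces each piece to an elementary integral of the form $\int_0^{\infty}\ee^{-\alpha t}(t+d)\dd{t}=d/\alpha+1/\alpha^2$ (and its $\beta$-analogue). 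Bounding $\alpha,\beta\geq\mu$ in these two values bounds each piece by $\frac{1}{\mu}(d+1/\mu)$, and the factor $2$ then yields exactly the stated constant $\frac{4K}{\mu\cos^{(\alpha+\beta)/2}d}(d+1/\mu)$.

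The step I expect to require the most care is the branch-of-logarithm identity $\log z=\zeta$: one must confirm that the $\log$ appearing in~\eqref{eq:f-bound-log-algebraic-2} is the principal branch so that $|\log z|=\sqrt{x^2+y^2}$ holds exactly, which is precisely what avoids the overestimation criticized in the introduction. Beyond that, the only genuine obstacle is producing the clean lower bound on $|1+z^2|$; once Lemma~\ref{lem:bound-SE-1p-exp} is invoked through the substitution $z^2=\ee^{2\zeta}$, the remaining estimates are elementary and parallel to those in Lemma~\ref{lem:SE1-discretization}.
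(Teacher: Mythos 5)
Your proposal is correct and follows essentially the same route as the paper: the identical pointwise bound on $|F(\zeta)|$ via $\log(\ee^{\zeta})=\zeta$ and Lemma~\ref{lem:bound-SE-1p-exp} applied to $\ee^{2\zeta}$ (your form with $\ee^{\alpha x}/(1+\ee^{2x})^{(\alpha+\beta)/2}$ is algebraically the same as the paper's $1/\{(1+\ee^{-2x})^{\alpha/2}(1+\ee^{2x})^{\beta/2}\}$), the same verification of~\eqref{eq:F-imag-int-0}, and the same reduction of $\mathcal{N}(F,d)$ to an elementary integral. The only cosmetic difference is at the end, where you use $\sqrt{x^2+y^2}\leq|x|+d$ and split the integral at $0$ keeping $\alpha$ and $\beta$ separate, whereas the paper first passes to $\mu$, exploits evenness, and integrates $\sqrt{x^2+d^2}\,\ee^{-\mu x}$ by parts; both yield exactly the stated constant.
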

\begin{proof}
It suffices to show that $F\in\mathbf{B}(\domD_d)$.
Because $f(\psi_2(\cdot))$ is analytic on $\domD_d$ and
$\psi_2'$ is analytic on the whole complex plane,
$F$ is analytic on $\domD_d$.
Next, we show~\eqref{eq:F-imag-int-0}.
From~\eqref{eq:f-bound-log-algebraic-2}
and Lemma~\ref{lem:bound-SE-1p-exp}, it holds
for $\zeta=x+\ii y\in \domD_d$ that
\begin{align}
 |F(\zeta)|&\leq
K\frac{|\zeta|}{|1+\ee^{-2\zeta}|^{\alpha/2}|1+\ee^{2\zeta}|^{\beta/2}}
\nonumber\\
&\leq K\frac{\sqrt{x^2+y^2}}{(1+\ee^{-2x})^{\alpha/2}\cos^{\alpha/2}(y)
(1+\ee^{2x})^{\beta/2}\cos^{\beta/2}(y)}.
\label{eq:bound-F-SE2}
\end{align}
Using this inequality, we have
\begin{align*}
&    \int_{-d}^d\left|F(x + \ii y)\right|\dd y\\
&\leq\frac{K\sqrt{x^2+1}}{(1+\ee^{-2x})^{\alpha/2}(1+\ee^{2x})^{\beta/2}}
\int_{-d}^d
\frac{\sqrt{\{x^2/(x^2+1)\} + \{y^2/(x^2+1)\}}}{\cos^{(\alpha+\beta)/2}(y)}
 \dd{y}\\
& < \frac{K\sqrt{x^2+1}}{(1+\ee^{-2x})^{\alpha/2}(1+\ee^{2x})^{\beta/2}}
\int_{-d}^d
\frac{\sqrt{1 + \{y^2/(0^2+1)\}}}{\cos^{(\alpha+\beta)/2}(y)}
 \dd{y}\\
&\to 0 \cdot
\int_{-d}^d
\frac{\sqrt{1 + y^2}}{\cos^{(\alpha+\beta)/2}(y)}
 \dd{y}
\quad (x \to \pm\infty),
\end{align*}
which shows~\eqref{eq:F-imag-int-0}.
Finally, we estimate $\mathcal{N}(F,d)$.
Using~\eqref{eq:bound-F-SE2},
we have
\begin{align*}
&\int_{-\infty}^{\infty} \left\{|F(x + \ii y)|+|F(x - \ii y)|\right\}\dd x\\
&\leq\frac{K}{\cos^{(\alpha+\beta)/2}(y)}\int_{-\infty}^{\infty}
\frac{\sqrt{x^2+y^2}}{(1+\ee^{-2x})^{\alpha/2}
(1+\ee^{2x})^{\beta/2}}
\dd{x}\\
&\quad +
\frac{K}{\cos^{(\alpha+\beta)/2}(-y)}\int_{-\infty}^{\infty}
\frac{\sqrt{x^2+(-y)^2}}{(1+\ee^{-2x})^{\alpha/2}
(1+\ee^{2x})^{\beta/2}}
\dd{x}\\
&\leq\frac{2K}{\cos^{(\alpha+\beta)/2}(d)}\int_{-\infty}^{\infty}
\frac{\sqrt{x^2+d^2}}{(1+\ee^{-2x})^{\alpha/2}
(1+\ee^{2x})^{\beta/2}}
\dd{x},
\end{align*}
which holds for $y\in [-d,d]$ (note that $d<\pi/2$).
Therefore, the inequality remains valid
when taking the limit $y\to d - 0$.
Using $\mu=\min\{\alpha,\beta\}$,
we bound the integral as
\begin{align*}
\int_{-\infty}^{\infty}
\frac{\sqrt{x^2+d^2}}{(1+\ee^{-2x})^{\alpha/2}
(1+\ee^{2x})^{\beta/2}}
\dd{x}
&\leq \int_{-\infty}^{\infty}
\frac{\sqrt{x^2+d^2}}{(1+\ee^{-2x})^{\mu/2}
(1+\ee^{2x})^{\mu/2}}
\dd{x}\\
&=2 \int_0^{\infty}
\frac{\sqrt{x^2+d^2}}{(1+\ee^{-2x})^{\mu}}\ee^{-\mu x}
\dd{x}\\
&\leq 2 \int_0^{\infty}
\frac{\sqrt{x^2+d^2}}{(1+0)^{\mu}}\ee^{-\mu x}\dd x.
\end{align*}
Furthermore, using integration by parts, we have
\begin{align*}
2 \int_0^{\infty}
\sqrt{x^2+d^2}\ee^{-\mu x}\dd x
&=2 \int_0^{\infty}\sqrt{x^2+d^2}
\left(\frac{\ee^{-\mu x}}{-\mu}\right)'\dd x\\
&=\left[
2\sqrt{x^2+d^2}\left(\frac{\ee^{-\mu x}}{-\mu}\right)
\right]_{x=0}^{x=\infty}
+\frac{2}{\mu}\int_0^{\infty}
\frac{x}{\sqrt{x^2+d^2}}\ee^{-\mu x}\dd{x}\\
&\leq\frac{2d}{\mu}
+\frac{2}{\mu}\int_0^{\infty}\frac{x}{\sqrt{x^2 + 0}}\ee^{-\mu x}\dd{x}\\
&=\frac{2d}{\mu}+\frac{2}{\mu^2}.
\end{align*}
Thus, we obtain the conclusion.
\end{proof}

Next, we bound the truncation error as follows.

\begin{lemma}
\label{lem:SE2-truncation}
Let $K$, $\alpha$ and $\beta$ be positive constants.
Assume that $f$ satisfies~\eqref{eq:f-bound-log-algebraic-2}
for all $z\in (0, \infty)$.
Let $\mu=\min\{\alpha,\beta\}$,
let $n$ be positive integer,
and
let $M$ and $N$ be selected by~\eqref{eq:def-MN-SE}.
Let $Mh\geq 1/\alpha$ and $Nh\geq 1/\beta$ be satisfied.
Then, putting $F(x)=f(\psi_2(x))\psi_2'(x)$, we have
\begin{align*}
\left|
h\sum_{k=-\infty}^{-M-1}F(kh)
+h\sum_{k=N+1}^{\infty}F(kh)
\right|
\leq \frac{2K}{\mu}\left(nh+\frac{1}{\mu}\right)
\ee^{-\mu n h}.
\end{align*}
\end{lemma}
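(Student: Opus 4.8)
The plan is to follow exactly the structure of the proof of Lemma~\ref{lem:SE1-truncation}, adapting it to the transformation $\psi_2(x)=\ee^x$. The first step is to obtain a pointwise bound on $|F(x)|$ on the real axis. Since $\psi_2'(x)=\ee^x$ and $|\log\psi_2(x)|=|\log\ee^x|=|x|$, substituting $z=\ee^x$ into~\eqref{eq:f-bound-log-algebraic-2} gives
\[
 |F(x)|\leq K\frac{|x|\ee^{\alpha x}}{(1+\ee^{2x})^{(\alpha+\beta)/2}}.
\]
I would then split according to the sign of $x$. For $x\leq 0$ I use $(1+\ee^{2x})^{(\alpha+\beta)/2}\geq 1$ and $|x|=-x$ to obtain $|F(x)|\leq K(-x)\ee^{\alpha x}=K G_-(x)$, whereas for $x\geq 0$ I use $(1+\ee^{2x})^{(\alpha+\beta)/2}\geq \ee^{(\alpha+\beta)x}$ and $|x|=x$ to obtain $|F(x)|\leq Kx\ee^{-\beta x}=K G_+(x)$, where $G_-$ and $G_+$ are precisely the functions appearing in Proposition~\ref{prop:SE1-monotone}. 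Note that, in contrast to the discretization lemma, no $\sqrt{x^2+y^2}$ factor arises because the truncation error is evaluated on the real axis, which is a genuine simplification.

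The second step is to replace the tail sums by integrals. Because $Mh\geq 1/\alpha$ and $Nh\geq 1/\beta$ are assumed, every node $kh$ with $k\leq -M-1$ lies in the region $x\leq -1/\alpha$ where $G_-$ increases, and every node with $k\geq N+1$ lies in the region $x\geq 1/\beta$ where $G_+$ decreases, by Proposition~\ref{prop:SE1-monotone}. Monotonicity then yields
\[
 h\sum_{k=-\infty}^{-M-1}G_-(kh)\leq\int_{-\infty}^{-Mh}G_-(x)\dd{x},\qquad h\sum_{k=N+1}^{\infty}G_+(kh)\leq\int_{Nh}^{\infty}G_+(x)\dd{x}.
\]

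The third step is to evaluate these two integrals in closed form. A standard computation (substituting $u=-x$ in the first) gives $\int_{-\infty}^{-Mh}(-x)\ee^{\alpha x}\dd{x}=\frac{1}{\alpha}\bigl(Mh+\tfrac1\alpha\bigr)\ee^{-\alpha Mh}$ and $\int_{Nh}^{\infty}x\ee^{-\beta x}\dd{x}=\frac{1}{\beta}\bigl(Nh+\tfrac1\beta\bigr)\ee^{-\beta Nh}$. Finally, I would collapse both contributions to the claimed bound using the choice~\eqref{eq:def-MN-SE}: from $M=\lceil(\mu/\alpha)n\rceil$ one has $\alpha M\geq\mu n$ and $M\leq n$ (because $\mu/\alpha\leq 1$), and likewise $\beta N\geq\mu n$ and $N\leq n$; combined with $1/\alpha\leq 1/\mu$ and $1/\beta\leq 1/\mu$, each integral is bounded by $\frac{K}{\mu}\bigl(nh+\tfrac1\mu\bigr)\ee^{-\mu nh}$, and adding the two gives the stated factor $2K/\mu$.

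The main obstacle is minor and lies in the first step: arranging the pointwise estimate so that the two tails match the functions $G_-$ and $G_+$ of Proposition~\ref{prop:SE1-monotone} \emph{exactly}, so that the already-proved monotonicity can be invoked verbatim. Once this alignment is secured, the remaining steps are the same integral comparison and index bookkeeping as in Lemma~\ref{lem:SE1-truncation}.
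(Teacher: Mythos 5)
Your proposal is correct and follows essentially the same route as the paper's proof: the same pointwise bound $|F(x)|\leq K|x|/\{(1+\ee^{-2x})^{\alpha/2}(1+\ee^{2x})^{\beta/2}\}$, reduction of the two tails to $K(-x)\ee^{\alpha x}$ and $Kx\ee^{-\beta x}$, comparison with integrals via Proposition~\ref{prop:SE1-monotone}, the same closed-form evaluation of the integrals, and the same index bookkeeping $M\leq n$, $N\leq n$, $\alpha M\geq\mu n$, $\beta N\geq\mu n$. No gaps.
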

\begin{proof}
From~\eqref{eq:f-bound-log-algebraic-2}, it holds that
\begin{align*}
 |F(x)|\leq K \frac{|\ee^{x}|^{\alpha-1}}{|1+\ee^{2x}|^{(\alpha+\beta)/2}}
|\log\ee^{x}||\ee^x|
=K
\frac{|x|}{(1+\ee^{-2x})^{\alpha/2}(1+\ee^{2x})^{\beta/2}},
\end{align*}
which is also obtained by substituting $y=0$ into~\eqref{eq:bound-F-SE2}.
Using this inequality, we have
\begin{align*}
&\left|
h\sum_{k=-\infty}^{-M-1}F(kh)
+h\sum_{k=N+1}^{\infty}F(kh)
\right|\\
&\leq h\sum_{k=-\infty}^{-M-1}
\frac{K |kh|}{(1+\ee^{-2kh})^{\alpha/2}(1+\ee^{2kh})^{\beta/2}}
+ h\sum_{k=N+1}^{\infty}
\frac{K |kh|}{(1+\ee^{-2kh})^{\alpha/2}(1+\ee^{2kh})^{\beta/2}}\\
& =h\sum_{k=-\infty}^{-M-1}
\frac{K(-kh)\ee^{\alpha k h}}{(1+\ee^{2kh})^{(\alpha+\beta)/2}}
 + h\sum_{k=N+1}^{\infty}
\frac{Kkh\ee^{-\beta k h}}{(1+\ee^{-2kh})^{(\alpha+\beta)/2}}\\
&\leq h\sum_{k=-\infty}^{-M-1}
\frac{K(-kh)\ee^{\alpha k h}}{(1+0)^{(\alpha+\beta)/2}}
 + h\sum_{k=N+1}^{\infty}
\frac{Kkh\ee^{-\beta k h}}{(1+0)^{(\alpha+\beta)/2}}.
\end{align*}
Using Proposition~\ref{prop:SE1-monotone},
for $Mh\geq 1/\alpha$ and $Nh\geq 1/\beta$,
we further bound the sums as
\begin{align*}
&h\sum_{k=-\infty}^{-M-1} K(-kh)\ee^{\alpha k h}
 + h\sum_{k=N+1}^{\infty} Kkh\ee^{-\beta k h}\\
&\leq\int_{-\infty}^{-Mh} K(-x)\ee^{\alpha x}\dd{x}
+\int_{Nh}^{\infty} K x\ee^{-\beta x}\dd{x}\\
&=\frac{K}{\alpha}\left(Mh + \frac{1}{\alpha}\right)\ee^{-\alpha Mh}
+\frac{K}{\beta}\left(Nh + \frac{1}{\beta}\right)\ee^{-\beta Nh}\\
&\leq\frac{K}{\mu}\left(Mh + \frac{1}{\mu}\right)\ee^{-\alpha Mh}
+\frac{K}{\mu}\left(Nh + \frac{1}{\mu}\right)\ee^{-\beta Nh}.
\end{align*}
where $\mu=\min\{\alpha,\beta\}$ is used
at the last inequality.
Finally, using~\eqref{eq:def-MN-SE},
we have $Mh\leq nh$ (because $M\leq n$),
$Nh\leq nh$ (because $N\leq n$),
$\ee^{-\alpha M h}\leq \ee^{-\mu n h}$
(because $\alpha M\geq \mu n$)
and $\ee^{-\beta N h}\leq \ee^{-\mu n h}$ (because $\beta N\geq \mu n$),
from which we obtain the conclusion.
\end{proof}

We are now in a position to prove Theorem~\ref{thm:new-SE2}.
Note that if $n\geq 1/(2\pi d \mu)$,
then with $h$ selected by~\eqref{eq:standard-h-SE},
the inequality~\eqref{eq:Mh-bounded-by-1-alpha} holds,
and it also holds that
\begin{equation}
 Nh = \left\lceil\frac{\mu}{\beta} n\right\rceil
\sqrt{\frac{2\pi d}{\mu n}}
\geq \frac{\mu}{\beta} n \sqrt{\frac{2\pi d}{\mu n}}
=\frac{1}{\beta}\sqrt{2\pi d \mu n} \geq \frac{1}{\beta}.
\label{eq:Nh-bounded-by-1-beta}
\end{equation}
Therefore,
from Lemmas~\ref{lem:SE2-discretization}
and~\ref{lem:SE2-truncation},
substituting~\eqref{eq:standard-h-SE} into $h$,
we have
\begin{align*}
\left|
\int_{-\infty}^{\infty}F(x)\dd{x}
- h \sum_{k=-M}^N F(kh)
\right| \leq C(n) \sqrt{n}\ee^{-\sqrt{2\pi d \mu n}},
\end{align*}
where
\[
 C(n)
=\frac{2 K}{\mu^2}
\left[
\frac{2(\mu d + 1)}{\sqrt{n}(1 - \ee^{-\sqrt{2\pi d \mu n}})\cos^{(\alpha+\beta)/2}(d)}
+ \sqrt{2\pi d \mu} + \frac{1}{\sqrt{n}}
\right].
\]
Furthermore, $C(n)\leq C(1)$ holds, which completes the proof of
Theorem~\ref{thm:new-SE2}.

\subsubsection{Proof of Theorem~\ref{thm:new-DE2}}

For Theorem~\ref{thm:new-DE2},
we estimate the discretization error as follows.

\begin{lemma}
\label{lem:DE2-discretization}
Let $K$, $\alpha$, $\beta$ and $d$ be positive constants with $d<\pi/2$.
Assume that $f$ is analytic on $\phi_2(\domD_d)$, and
satisfies~\eqref{eq:f-bound-log-algebraic-2} for all $z\in\phi_2(\domD_d)$.
Let $\mu=\min\{\alpha,\beta\}$. Then, putting
$F(x)=f(\phi_2(x))\phi_2'(x)$, we have~\eqref{eq:discretization-error},
where
\[
\mathcal{N}(F,d)
\leq \frac{2K}{\mu\cos^{(\alpha+\beta)/2}((\pi/2)\sin d)\cos d}
\left(\pi + \frac{2}{\mu\cos d}\right).
\]
\end{lemma}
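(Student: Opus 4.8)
The plan is to show that $F(x)=f(\phi_2(x))\phi_2'(x)$ lies in $\mathbf{B}(\domD_d)$ and then read off $\mathcal{N}(F,d)$ from the classical bound~\eqref{eq:discretization-error}, following the same three steps as the proof of Lemma~\ref{lem:SE2-discretization}: analyticity, the limit condition~\eqref{eq:F-imag-int-0}, and the finiteness estimate. Analyticity on $\domD_d$ is immediate, since $f(\phi_2(\cdot))$ is analytic there and $\phi_2'$ is entire.

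The first substantive step is a pointwise bound on $|F(\zeta)|$ for $\zeta=x+\ii y\in\domD_d$. I would use $\phi_2'(\zeta)=(\pi/2)\cosh(\zeta)\,\phi_2(\zeta)$ together with $\log\phi_2(\zeta)=(\pi/2)\sinh\zeta$, so that~\eqref{eq:f-bound-log-algebraic-2} gives
\[
|F(\zeta)|\leq K\Bigl(\frac{\pi}{2}\Bigr)^2|\sinh\zeta|\,|\cosh\zeta|\,
\frac{|\phi_2(\zeta)|^{\alpha}}{|1+\ee^{\pi\sinh\zeta}|^{(\alpha+\beta)/2}}.
\]
The algebraic factor then simplifies through the identity $|1+\ee^{\pi\sinh\zeta}|=|\phi_2(\zeta)|^{2}\,|1+\ee^{-\pi\sinh\zeta}|$, which cancels $|\phi_2(\zeta)|^{\alpha}$ and leaves $1/(|1+\ee^{-\pi\sinh\zeta}|^{\alpha/2}|1+\ee^{\pi\sinh\zeta}|^{\beta/2})$. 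Applying Lemma~\ref{lem:bound-DE-1p-exp} to each factor and using $|\sinh\zeta|\leq\cosh x$ and $|\cosh\zeta|\leq\cosh x$ yields
\[
|F(\zeta)|\leq \frac{K(\pi/2)^2\cosh^2 x}
{(1+\ee^{-\pi\sinh(x)\cos y})^{\alpha/2}(1+\ee^{\pi\sinh(x)\cos y})^{\beta/2}\cos^{(\alpha+\beta)/2}((\pi/2)\sin y)}.
\]
From this bound the limit condition~\eqref{eq:F-imag-int-0} follows exactly as in the $\psi_2$ and $\phi_1$ cases: as $x\to\pm\infty$ one of the two doubly-exponential factors forces the inner $y$-integral to vanish, dominating the $\cosh^2 x$ growth.

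For $\mathcal{N}(F,d)$ I would add the bounds for $\pm y$, use $\cos d\leq\cos(\pm y)$ and $\cos((\pi/2)\sin d)\leq\cos((\pi/2)\sin(\pm y))$ (valid on $[-d,d]$ since $d<\pi/2$) to replace $y$ by $d$, and shrink the exponents using $\mu=\min\{\alpha,\beta\}$. Since the resulting integrand is even in $x$, this reduces to $2\int_0^\infty \cosh^2 x\,\ee^{-\pi\mu\sinh(x)\cos(d)/2}\dd x$.

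The key step—and the one I expect to be the main obstacle to recovering the stated constant exactly—is the substitution $u=\sinh x$, which turns this into $2\int_0^\infty\sqrt{1+u^2}\,\ee^{-cu}\dd u$ with $c=\pi\mu\cos(d)/2$, because $\cosh x\,\dd x=\dd u$ and $\cosh x=\sqrt{1+u^2}$. This is precisely the integral handled by integration by parts in the proof of Lemma~\ref{lem:SE2-discretization}, giving the bound $1/c+1/c^2$. Substituting the value of $c$ and collecting the prefactor $2K(\pi/2)^2/\cos^{(\alpha+\beta)/2}((\pi/2)\sin d)$ then produces $\frac{2K}{\mu\cos^{(\alpha+\beta)/2}((\pi/2)\sin d)\cos d}\bigl(\pi+\tfrac{2}{\mu\cos d}\bigr)$, as claimed. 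The delicate points are thus bookkeeping the two factors of $\pi/2$ (from $\phi_2'$ and from $\log\phi_2$) correctly and recognizing that the $\cosh^2 x$ weight is exactly what the substitution needs to reproduce the $\sqrt{1+u^2}$ integrand.
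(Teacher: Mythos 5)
Your proposal is correct and follows essentially the same route as the paper: the same pointwise bound on $|F(\zeta)|$ via Lemma~\ref{lem:bound-DE-1p-exp} and $|\sinh\zeta|,|\cosh\zeta|\leq\cosh x$, the same replacement of $y$ by $d$, and the same reduction to a single exponential integral. The only (cosmetic) difference is that you evaluate $2\int_0^\infty\cosh^2x\,\ee^{-(\pi/2)\mu\cos(d)\sinh x}\,\dd x$ by substituting $u=\sinh x$ and reusing the integration-by-parts bound from Lemma~\ref{lem:SE2-discretization}, whereas the paper integrates by parts directly in $x$; both yield the stated constant.
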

\begin{proof}
It suffices to show that $F\in\mathbf{B}(\domD_d)$.
Because $f(\phi_2(\cdot))$ is analytic on $\domD_d$ and
$\phi_2'$ is analytic on the whole complex plane,
$F$ is analytic on $\domD_d$.
Next, we show~\eqref{eq:F-imag-int-0}.
From~\eqref{eq:f-bound-log-algebraic-2}
and Lemma~\ref{lem:bound-DE-1p-exp}, it holds
for $\zeta=x+\ii y\in \domD_d$ that
\begin{align}
 |F(\zeta)|&\leq
K\frac{|(\pi/2)\sinh\zeta||(\pi/2)\cosh\zeta|}{|1+\ee^{-\pi\sinh\zeta}|^{\alpha/2}|1+\ee^{\pi\sinh\zeta}|^{\beta/2}}
\nonumber\\
&\leq K\frac{(\pi/2)^2\cosh^2 x}{(1+\ee^{-\pi\sinh(x)\cos y})^{\alpha/2}(1+\ee^{\pi\sinh(x)\cos y})^{\beta/2}\cos^{(\alpha+\beta)/2}((\pi/2)\sin y)}\nonumber\\
&\leq K\frac{(\pi/2)^2\cosh^2 x}{(1+\ee^{-\pi\sinh(x)\cos y})^{\mu/2}(1+\ee^{\pi\sinh(x)\cos y})^{\mu/2}\cos^{(\alpha+\beta)/2}((\pi/2)\sin y)},
\label{eq:bound-F-DE2}
\end{align}
where $\mu=\min\{\alpha,\beta\}$ is used
at the last inequality.
Using this inequality~\eqref{eq:bound-F-DE2}, we have
\begin{align*}
& \int_{-d}^d\left|F(x + \ii y)\right|\dd y\\
&\leq\int_{-d}^d
\frac{K(\pi/2)^2\cosh^2 x}{(1+\ee^{-\pi\sinh(x)\cos y})^{\mu/2}(1+\ee^{\pi\sinh(x)\cos y})^{\mu/2}\cos^{(\alpha+\beta)/2}((\pi/2)\sin y)}\dd{y}\\
&\leq\frac{K(\pi/2)^2\cosh^2 x}{(1+\ee^{-\pi\sinh(x)\cos d})^{\mu/2}(1+\ee^{\pi\sinh(x)\cos d})^{\mu/2}\cos^{(\alpha+\beta)/2}((\pi/2)\sin d)}
\int_{-d}^d
 \dd{y}\\
&\to 0 \cdot 2d
\quad (x \to \pm\infty),
\end{align*}
which shows~\eqref{eq:F-imag-int-0}.
Finally, we estimate $\mathcal{N}(F,d)$.
Using~\eqref{eq:bound-F-DE2},
we have
\begin{align*}
&\int_{-\infty}^{\infty} \left\{|F(x + \ii y)|+|F(x - \ii y)|\right\}\dd x\\
&\leq
\frac{K}{\cos^{(\alpha+\beta)/2}((\pi/2)\sin y)}
\left\{
\int_{-\infty}^{\infty}
\frac{(\pi/2)^2\cosh^2 x}{(1+\ee^{-\pi\sinh(x)\cos y})^{\mu/2}(1+\ee^{\pi\sinh(x)\cos y})^{\mu/2}}
\dd{x}\right.\\
&\qquad\qquad\qquad\qquad\qquad +\left.
\int_{-\infty}^{\infty}
\frac{(\pi/2)^2\cosh^2 x}{(1+\ee^{-\pi\sinh(x)\cos(-y)})^{\mu/2}(1+\ee^{\pi\sinh(x)\cos(-y)})^{\mu/2}}
\dd{x}\right\}\\
&\leq\frac{2K}{\cos^{(\alpha+\beta)/2}((\pi/2)\sin d)}\int_{-\infty}^{\infty}
\frac{(\pi/2)^2\cosh^2 x}{(1+\ee^{-\pi\sinh(x)\cos d})^{\mu/2}(1+\ee^{\pi\sinh(x)\cos d})^{\mu/2}}
\dd{x},
\end{align*}
which holds for $y\in [-d,d]$ (note that $d<\pi/2$).
Therefore, the inequality remains valid
when taking the limit $y\to d - 0$.
We bound the integral as
\begin{align*}
&\int_{-\infty}^{\infty}
\frac{(\pi/2)^2\cosh^2 x}{(1+\ee^{-\pi\sinh(x)\cos d})^{\mu/2}(1+\ee^{\pi\sinh(x)\cos d})^{\mu/2}}
\dd{x}\\
&=2 \int_0^{\infty}
\frac{(\pi/2)^2\cosh^2 x}{(1+\ee^{-\pi\sinh(x)\cos d})^{\mu}}
\ee^{-(\pi/2)\mu\sinh(x)\cos d}\dd{x}\\
&\leq 2 \int_0^{\infty}
\frac{(\pi/2)^2\cosh^2 x}{(1+0)^{\mu}}
\ee^{-(\pi/2)\mu\sinh(x)\cos d}\dd{x}.
\end{align*}
Furthermore, using integration by parts, we have
\begin{align*}
&2 \int_0^{\infty}
\left(\frac{\pi}{2}\cosh x\right)^2
\ee^{-(\pi/2)\mu\sinh(x)\cos d}\dd{x}\\
&=-\int_0^{\infty}
\frac{\pi\cosh x}{\mu \cos d}
\left(\ee^{-(\pi/2)\mu\sinh(x)\cos d}\right)'\dd{x}\\
&=\left[-
\frac{\pi\cosh x}{\mu \cos d}
\left(\ee^{-(\pi/2)\mu\sinh(x)\cos d}\right)
\right]_{x=0}^{x=\infty}
+\int_0^{\infty}
\frac{\pi\sinh x}{\mu\cos d}
\ee^{-(\pi/2)\mu\sinh(x)\cos d}\dd{x}\\
&\leq\frac{\pi}{\mu\cos d}
+\int_0^{\infty}
\frac{\pi\cosh x}{\mu\cos d}
\ee^{-(\pi/2)\mu\sinh(x)\cos d}\dd{x}\\
&=\frac{\pi}{\mu\cos d}+\frac{2}{\mu^2\cos^2 d}.
\end{align*}
Thus, we obtain the conclusion.
\end{proof}

Next, we bound the truncation error as follows.

\begin{lemma}
\label{lem:DE2-truncation}
Let $K$, $\alpha$ and $\beta$ be positive constants.
Assume that $f$ satisfies~\eqref{eq:f-bound-log-algebraic-2}
for all $z\in (0, \infty)$.
Let $\mu=\min\{\alpha,\beta\}$,
let $n$ be positive integer,
and
let $M$ and $N$ be selected by~\eqref{eq:def-MN-DE2}.
Let $Mh\geq\arsinh(4/(\pi\alpha))$ and $Nh\geq\arsinh(4/(\pi\beta))$
be satisfied.
Then, putting $F(x)=f(\phi_2(x))\phi_2'(x)$, we have
\begin{align*}
&\left|
h\sum_{k=-\infty}^{-M-1}F(kh)
+h\sum_{k=N+1}^{\infty}F(kh)
\right|\\
&\leq \frac{K}{2\mu}\left(\pi\sinh(Mh)+\pi\sinh(Nh)+\frac{4}{\mu}\right)
\ee^{-(\pi/2) \mu q(4dn/\mu)},
\end{align*}
where $q(x)=x/\arsinh x$.
\end{lemma}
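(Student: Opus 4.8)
The plan is to follow the same template as the proof of Lemma~\ref{lem:SE2-truncation}, adapted to the DE transformation $\phi_2(x)=\ee^{(\pi/2)\sinh x}$. First I would record a sharp bound on $\abs{F(x)}$ on the real axis. Since $\log\phi_2(x)=(\pi/2)\sinh x$ and $\phi_2'(x)=(\pi/2)\cosh(x)\,\phi_2(x)$, inserting these into~\eqref{eq:f-bound-log-algebraic-2} and using $\phi_2(x)^2=\ee^{\pi\sinh x}$ to factor $\abs{1+\phi_2(x)^2}^{(\alpha+\beta)/2}$ (exactly as in the first line of the derivation of~\eqref{eq:bound-F-DE2}) yields
\[
 \abs{F(x)}\leq K\left(\frac{\pi}{2}\right)^2
 \frac{\abs{\sinh x}\cosh x}
 {(1+\ee^{-\pi\sinh x})^{\alpha/2}(1+\ee^{\pi\sinh x})^{\beta/2}}.
\]
Note that here I keep the factor $\abs{\sinh x}\cosh x$ coming from $\abs{\log\phi_2}\cdot\abs{\phi_2'/\phi_2}$, rather than bounding it by $\cosh^2 x$ as in the discretization estimate; it is this choice that produces the $\sinh(Mh)$ and $\sinh(Nh)$ terms appearing in the statement. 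I would then split the truncated tails into the left sum ($k\leq -M-1$) and the right sum ($k\geq N+1$) and treat each separately.

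For the left tail I would rewrite $(1+\ee^{-\pi\sinh x})^{-\alpha/2}=\ee^{(\pi/2)\alpha\sinh x}(1+\ee^{\pi\sinh x})^{-\alpha/2}$, bound the leftover factor $(1+\ee^{\pi\sinh x})^{-(\alpha+\beta)/2}\leq 1$, and use $\abs{\sinh x}=-\sinh x$, reducing the summand to $K(\pi/2)^2\{-\sinh(x)\cosh(x)\,\ee^{(\pi/2)\alpha\sinh x}\}$. This is precisely the function $G_{-}$ of Proposition~\ref{prop:DE1-monotone} with $\alpha$ replaced by $\alpha/2$, so it increases monotonically for $x\leq -\arsinh(4/(\pi\alpha))$; the hypothesis $Mh\geq\arsinh(4/(\pi\alpha))$ is exactly what licenses replacing the left sum by $\int_{-\infty}^{-Mh}$. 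The right tail is handled symmetrically through $G_{+}$ with $\beta$ replaced by $\beta/2$, using $Nh\geq\arsinh(4/(\pi\beta))$.

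Each resulting integral I would evaluate by the substitution $u=\sinh x$ (so $\dd u=\cosh x\,\dd x$), which turns $\int_{-\infty}^{-Mh}(-\sinh x)\cosh(x)\,\ee^{(\pi/2)\alpha\sinh x}\,\dd x$ into the elementary $\int_{-\infty}^{-\sinh(Mh)}(-u)\,\ee^{(\pi/2)\alpha u}\,\dd u$, and similarly for the right tail. These integrate to a prefactor of the form $\frac{\pi\sinh(Mh)}{2\alpha}+\frac{1}{\alpha^2}$ multiplied by $\ee^{-(\pi/2)\alpha\sinh(Mh)}$, and the analogous right-tail expression. Bounding $\alpha,\beta$ below by $\mu$ in the prefactors gives $\frac{K}{2\mu}(\pi\sinh(Mh)+2/\mu)\,\ee^{-(\pi/2)\alpha\sinh(Mh)}$ and its counterpart. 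Finally, the selection~\eqref{eq:def-MN-DE2} gives $\alpha\sinh(Mh)\geq\mu\,q(4dn/\mu)$ and $\beta\sinh(Nh)\geq\mu\,q(4dn/\mu)$, so both exponentials are at most $\ee^{-(\pi/2)\mu q(4dn/\mu)}$; adding the two tails then assembles the stated bound.

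The main obstacle is keeping the constant $\pi/2$ (rather than $\pi$) in the exponent consistent throughout. Because $\phi_2(x)^2=\ee^{\pi\sinh x}$ contributes $\ee^{(\pi/2)\alpha\sinh x}$ and not $\ee^{\pi\alpha\sinh x}$, Proposition~\ref{prop:DE1-monotone} must be invoked with the halved parameters $\alpha/2$ and $\beta/2$. This is what converts its monotonicity threshold $\arsinh(2/(\pi\,\cdot))$ into $\arsinh(4/(\pi\,\cdot))$ and thereby matches the hypotheses on $Mh$ and $Nh$; tracking this factor correctly through the substitution $u=\sinh x$ is the only delicate bookkeeping, the remainder being the routine sum-to-integral comparison and elementary integration already carried out for $\psi_2$.
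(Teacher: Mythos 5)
Your proposal is correct and follows essentially the same route as the paper: the same real-axis bound $|F(x)|\leq K(\pi/2)^2|\sinh x|\cosh x\,(1+\ee^{-\pi\sinh x})^{-\alpha/2}(1+\ee^{\pi\sinh x})^{-\beta/2}$, the same reduction of each tail to $\mp\sinh(x)\cosh(x)\ee^{\pm(\pi/2)\gamma\sinh x}$ with Proposition~\ref{prop:DE1-monotone} applied at the halved parameters (hence the thresholds $\arsinh(4/(\pi\alpha))$, $\arsinh(4/(\pi\beta))$), the same sum-to-integral comparison and evaluation giving $\frac{K}{2\alpha}(\pi\sinh(Mh)+\frac{2}{\alpha})\ee^{-(\pi/2)\alpha\sinh(Mh)}$ and its counterpart, and the same final use of~\eqref{eq:def-MN-DE2}. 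The constants all check out, so nothing further is needed.
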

\begin{proof}
From~\eqref{eq:f-bound-log-algebraic-2}, it holds that
\begin{align*}
 |F(x)|\leq
 K \frac{(\pi/2)|\sinh x|(\pi/2)\cosh x}{(1+\ee^{-\pi\sinh x})^{\alpha/2}(1+\ee^{\pi\sinh x})^{\beta/2}}.
\end{align*}
Using this inequality, we have
\begin{align*}
&\left|
h\sum_{k=-\infty}^{-M-1}F(kh)
+h\sum_{k=N+1}^{\infty}F(kh)
\right|\\
&\leq h\sum_{k=-\infty}^{-M-1}
\frac{K(\pi/2)|\sinh(kh)|(\pi/2)\cosh(kh)}{(1+\ee^{-\pi\sinh(kh)})^{\alpha/2}(1+\ee^{\pi\sinh(kh)})^{\beta/2}}\\
&\quad+ h\sum_{k=N+1}^{\infty}
\frac{K(\pi/2)|\sinh(kh)|(\pi/2)\cosh(kh)}{(1+\ee^{-\pi\sinh(kh)})^{\alpha/2}(1+\ee^{\pi\sinh(kh)})^{\beta/2}}\\
& =-h\sum_{k=-\infty}^{-M-1}
\frac{K(\pi/2)^2\sinh(kh)\cosh(kh)}{(1+\ee^{\pi\sinh(kh)})^{(\alpha+\beta)/2}}
\ee^{(\pi/2)\alpha\sinh(kh)}\\
&\quad+ h\sum_{k=N+1}^{\infty}
\frac{K(\pi/2)^2\sinh(kh)\cosh(kh)}{(1+\ee^{-\pi\sinh(kh)})^{(\alpha+\beta)/2}}
\ee^{-(\pi/2)\beta\sinh(kh)}\\
&=-h\sum_{k=-\infty}^{-M-1}
\frac{K(\pi/2)^2\sinh(kh)\cosh(kh)}{(1+0)^{(\alpha+\beta)/2}}
\ee^{(\pi/2)\alpha\sinh(kh)}\\
&\quad+ h\sum_{k=N+1}^{\infty}
\frac{K(\pi/2)^2\sinh(kh)\cosh(kh)}{(1+0)^{(\alpha+\beta)/2}}
\ee^{-(\pi/2)\beta\sinh(kh)}.
\end{align*}
Using Proposition~\ref{prop:DE1-monotone},
for $Mh\geq\arsinh(4/(\pi\alpha))$ and $Nh\geq\arsinh(4/(\pi\beta))$,
we further bound the sums as
\begin{align*}
&-h\sum_{k=-\infty}^{-M-1}
K\left(\frac{\pi}{2}\right)^2\sinh(kh)\cosh(kh)
\ee^{(\pi/2)\alpha\sinh(kh)}\\
&\quad+ h\sum_{k=N+1}^{\infty}
K\left(\frac{\pi}{2}\right)^2\sinh(kh)\cosh(kh)
\ee^{-(\pi/2)\beta\sinh(kh)}\\
&\leq -\int_{-\infty}^{-Mh}
K\left(\frac{\pi}{2}\right)^2\sinh(x)\cosh(x)
\ee^{(\pi/2)\alpha\sinh x}\\
&\quad+ \int_{Nh}^{\infty}
K\left(\frac{\pi}{2}\right)^2\sinh(x)\cosh(x)
\ee^{-(\pi/2)\beta\sinh x}\\
&=\frac{K}{2\alpha}\left(\pi\sinh(Mh) + \frac{2}{\alpha}\right)
\ee^{-(\pi/2)\alpha\sinh(Mh)}
+\frac{K}{2\beta}\left(\pi\sinh(Nh) + \frac{2}{\beta}\right)
\ee^{-(\pi/2)\beta\sinh(Nh)}\\
&\leq\frac{K}{2\mu}\left(\pi\sinh(Mh) + \frac{2}{\mu}\right)
\ee^{-(\pi/2)\alpha\sinh(Mh)}
+\frac{K}{2\mu}\left(\pi\sinh(Nh) + \frac{2}{\mu}\right)
\ee^{-(\pi/2)\beta\sinh(Nh)},
\end{align*}
where $\mu=\min\{\alpha,\beta\}$ is used
at the last inequality.
Finally, using~\eqref{eq:def-MN-DE2},
we have
$\ee^{-(\pi/2)\alpha\sinh(M h)}\leq \ee^{-(\pi/2)\mu q(4 d n/\mu)}$
(because $\alpha\sinh(Mh)\geq \mu q(4 d n/\mu)$)
and $\ee^{-(\pi/2)\beta\sinh(N h)}\leq \ee^{-(\pi/2)\mu q(4 d n/\mu)}$
(because $\beta\sinh(Nh)\geq \mu q(4 d n/\mu)$),
from which we obtain the conclusion.
\end{proof}

We are now in a position to prove Theorem~\ref{thm:new-DE2}.
Note that if $h\leq \pi d$,
then with $h$ selected by~\eqref{eq:improve-h-DE2},
it holds that
\[
 Mh
\geq \arsinh\left(\frac{\mu}{\alpha}q\left(\frac{4 d n}{\mu}\right)\right)
=\arsinh\left(\frac{4d}{\alpha h}\right)\geq
\arsinh\left(\frac{4}{\pi\alpha}\right).
\]
Similarly, it holds that
\[
 Nh\geq \arsinh\left(\frac{\mu}{\beta}q\left(\frac{4 d n}{\mu}\right)\right)
=\arsinh\left(\frac{4d}{\beta h}\right)\geq
\arsinh\left(\frac{4}{\pi\beta}\right).
\]
Furthermore, if $n\geq \mu\sinh(1)/(4d)$,
i.e., if $\arsinh(4dn/\mu)\geq 1$,
then with $h$ selected by~\eqref{eq:improve-h-DE2},
it holds that
\begin{align*}
M &= \left\lceil
\frac{1}{h}\arsinh\left(\frac{4 d n/\alpha}{\arsinh(4 d n/\mu)}\right)
\right\rceil\\
&=\left\lceil
\frac{n}{\arsinh(4 d n/\mu)}\arsinh\left(\frac{4 d n/\alpha}{\arsinh(4 d n/\mu)}\right)
\right\rceil\\
&\leq\left\lceil
\frac{n}{\arsinh(4 d n/\mu)}\arsinh\left(\frac{4 d n/\alpha}{1}\right)
\right\rceil\\
&\leq\left\lceil
\frac{n}{\arsinh(4 d n/\mu)}\arsinh\left(4 d n/\mu\right)
\right\rceil\\
&=n,
\end{align*}
and similarly it holds that
\begin{align*}
N &= \left\lceil
\frac{1}{h}\arsinh\left(\frac{4 d n/\beta}{\arsinh(4 d n/\mu)}\right)
\right\rceil\\
&=\left\lceil
\frac{n}{\arsinh(4 d n/\mu)}\arsinh\left(\frac{4 d n/\beta}{\arsinh(4 d n/\mu)}\right)
\right\rceil\\
&\leq\left\lceil
\frac{n}{\arsinh(4 d n/\mu)}\arsinh\left(\frac{4 d n/\beta}{1}\right)
\right\rceil\\
&\leq\left\lceil
\frac{n}{\arsinh(4 d n/\mu)}\arsinh\left(4 d n/\mu\right)
\right\rceil\\
&=n.
\end{align*}
From the inequalities, the truncation error
in Lemma~\ref{lem:DE2-truncation} is further bounded as
\begin{align*}
&\frac{K}{2\mu}\left(\pi\sinh(Mh)+\pi\sinh(Nh)+\frac{4}{\mu}\right)
\ee^{-(\pi/2) \mu q(4dn/\mu)}\\
&\leq
\frac{K}{2\mu}\left(\pi\sinh(nh)+\pi\sinh(nh)+\frac{4}{\mu}\right)
\ee^{-(\pi/2) \mu q(4dn/\mu)}\\
&=\frac{K}{\mu}\left(\pi\sinh(nh)+\frac{2}{\mu}\right)
\ee^{-(\pi/2) \mu q(4dn/\mu)}.
\end{align*}
Therefore,
from Lemmas~\ref{lem:DE2-discretization}
and~\ref{lem:DE2-truncation},
substituting~\eqref{eq:improve-h-DE2} into $h$,
we have
\begin{align*}
\left|
\int_{-\infty}^{\infty}F(x)\dd{x}
- h \sum_{k=-M}^N F(kh)
\right| \leq C(n) n\ee^{-(\pi/2)\mu q(4 d n/\mu)},
\end{align*}
where
\[
 C(n)
=\frac{2K}{\mu^2}
\left[
\frac{2 + \pi\mu\cos d}{n(1 - \ee^{-(\pi/2)\mu q(4dn/\mu)})\cos^{(\alpha+\beta)/2}((\pi/2)\sin d)\cos^{2} d}
+2\pi d + \frac{1}{n}
\right].
\]
Furthermore, $C(n)\leq C(1)$ holds, which completes the proof of
Theorem~\ref{thm:new-DE2}.

\subsubsection{Proof of Theorem~\ref{thm:new-SE3}}

For Theorem~\ref{thm:new-SE3},
the following bounds are required.

\begin{lemma}[Okayama and Machida~{\cite[Lemma~7]{okayama17:_error_muham}}]
\label{lem:bound-SE3-1p-log}
Let $d$ be a positive constant with $d<\pi$.
Then, we have
\begin{align*}
\sup_{\zeta\in\overline{\domD_d}}
\left|
\frac{1 + \log(1+\ee^{\zeta})}{\log(1+\ee^{\zeta})}
\cdot\frac{1}{1+\ee^{-\zeta}}
\right|
&\leq \tilde{L}_d,\\
\sup_{x\in\mathbb{R}}
\left|
\frac{1 + \log(1+\ee^{x})}{\log(1+\ee^{x})}
\cdot\frac{1}{1+\ee^{-x}}
\right|
&\leq \ee^{\pi/12},
\end{align*}
where $\tilde{L}_d$ is a constant defined by~\eqref{eq:tilde-L-d}.
\end{lemma}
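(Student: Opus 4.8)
The plan is to treat both bounds as estimates of the single analytic function
\[
\Phi(\zeta) = \frac{1+\log(1+\ee^{\zeta})}{\log(1+\ee^{\zeta})}\cdot\frac{1}{1+\ee^{-\zeta}}.
\]
First I would record that $\Phi$ is analytic on $\overline{\domD_d}$ for $d<\pi$: the only candidate singularities are the zeros of $1+\ee^{-\zeta}$, located at $\zeta\in\ii\uppi(2\mathbb{Z}+1)$ and hence outside $\overline{\domD_d}$, and the zero of $\log(1+\ee^{\zeta})$, which is approached only in the limit $\Re\zeta\to-\infty$. Since $\Phi(\zeta)\to1$ as $\Re\zeta\to\pm\infty$, the function is bounded, and one may either invoke the maximum modulus principle to reduce the strip bound to the boundary lines $\Im\zeta=\pm d$, or estimate $\Phi$ directly. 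I would take the direct route, matching the style of Lemmas~\ref{lem:bound-SE-1p-exp} and~\ref{lem:bound-SE1-log}.

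For the first (strip) inequality I would write $\frac{1+w}{w}=1+\frac1w$ with $w=\log(1+\ee^{\zeta})$ and split
\[
\Phi(\zeta) = \frac{1}{1+\ee^{-\zeta}} + \frac{1}{\log(1+\ee^{\zeta})\,(1+\ee^{-\zeta})}.
\]
The first term is tame: by Lemma~\ref{lem:bound-SE-1p-exp}, for $\zeta=x+\ii y$ with $|y|\le d$,
\[
\left|\frac{1}{1+\ee^{-\zeta}}\right| \le \frac{1}{(1+\ee^{-x})\cos(y/2)} \le \frac{1}{\cos(d/2)} = \tilde{c}_d \le 1+\tilde{c}_d.
\]
The second term is the crux, and here I would produce a lower bound for $|\log(1+\ee^{\zeta})|$ from the integral representation
\[
\log(1+\ee^{\zeta}) = \int_{-\infty}^{x}\frac{\dd{t}}{1+\ee^{-(t+\ii y)}},
\]
which is valid because $(\log(1+\ee^{\zeta}))'=1/(1+\ee^{-\zeta})$ and $\log(1+\ee^{\zeta})\to0$ as $\Re\zeta\to-\infty$. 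Combined with the companion lower bound $|1+\ee^{-\zeta}|\ge(1+\ee^{-x})\cos(y/2)$ from Lemma~\ref{lem:bound-SE-1p-exp}, this controls the product $|\log(1+\ee^{\zeta})|\,|1+\ee^{-\zeta}|$ from below. The target constant for the second term is $\frac{1+\tilde{c}_d}{\log(2+\tilde{c}_d)}$, so that summing the two pieces reproduces the two-term form $\tilde{L}_d=(1+\tilde{c}_d)+\frac{1+\tilde{c}_d}{\log(2+\tilde{c}_d)}$ of~\eqref{eq:tilde-L-d}.

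For the second (real-axis) inequality everything is positive, so the substitution $s=\ee^{x}\in(0,\infty)$ reduces the claim to the single-variable estimate
\[
\sup_{s>0}\left\{\frac{1+\log(1+s)}{\log(1+s)}\cdot\frac{s}{1+s}\right\}\le \ee^{\uppi/12}.
\]
The bracketed expression tends to $1$ as $s\to0^{+}$ and to $0$ as $s\to\infty$, so its supremum is attained at an interior critical point; I would locate that point by differentiation and bound the resulting maximum, the clean constant $\ee^{\uppi/12}$ emerging from the optimization (it exceeds the true maximum, near $1.299$, by a small margin).

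I expect the main obstacle to be the lower bound on $|\log(1+\ee^{\zeta})|$ as $\Re\zeta\to-\infty$, where both $\log(1+\ee^{\zeta})$ and $1/(1+\ee^{-\zeta})$ vanish and only their ratio remains finite. The estimate is most delicate when $d>\uppi/2$, since then $\Re\ee^{\zeta}$ can be negative and the integral representation above can suffer cancellation, so that a crude real-part bound $|\log(1+\ee^{\zeta})|\ge\log|1+\ee^{\zeta}|$ is useless; a more careful argument there (for instance controlling $\arg(1+\ee^{\zeta})$, or integrating along the straight segment from $0$ to $\ee^{\zeta}$) would be required to recover the constant $\frac{1+\tilde{c}_d}{\log(2+\tilde{c}_d)}$.
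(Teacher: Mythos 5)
First, note that the paper does not prove this lemma at all: it is imported verbatim from Okayama and Machida \cite[Lemma~7]{okayama17:_error_muham}, so there is no in-paper proof to compare against. Judged on its own, your plan has the right skeleton but leaves the hardest step unproven. The decomposition $\Phi(\zeta)=\frac{1}{1+\ee^{-\zeta}}+\frac{1}{(1+\ee^{-\zeta})\log(1+\ee^{\zeta})}$ is sound, the first piece is correctly bounded by $\tilde{c}_d\leq 1+\tilde{c}_d$ via Lemma~\ref{lem:bound-SE-1p-exp}, and your two target constants do sum to $\tilde{L}_d$ as defined in~\eqref{eq:tilde-L-d}. But the bound $\frac{1+\tilde{c}_d}{\log(2+\tilde{c}_d)}$ on the second piece is precisely the statement of Lemma~\ref{lem:bound-SE-1p-log} of this paper (itself quoted from the literature); you could simply invoke it and be done. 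Instead you propose to rederive it from the representation $\log(1+\ee^{\zeta})=\int_{-\infty}^{x}(1+\ee^{-(t+\ii y)})^{-1}\dd{t}$, and that route fails as described: the triangle inequality applied to this integral produces an \emph{upper} bound on $|\log(1+\ee^{\zeta})|$, whereas controlling $1/|\log(1+\ee^{\zeta})|$ requires a \emph{lower} bound on the modulus, which no term-by-term estimation of the integrand can give; you acknowledge the cancellation problem for $d>\pi/2$ yourself and leave it unresolved. So, as a self-contained argument, the first inequality has a genuine gap exactly at its crux.

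The real-axis inequality is essentially workable but contains one error and one soft spot. After the substitution $s=\ee^{x}$, the function $\bigl(1+\frac{1}{\log(1+s)}\bigr)\frac{s}{1+s}$ tends to $1$ as $s\to\infty$, not to $0$ (both factors tend to $1$); the conclusion that the supremum is attained at an interior critical point survives only because the function exceeds $1$ in between, so the argument should rest on that observation rather than on the false limit. Moreover, the comparison of the interior maximum ($\approx 1.2985$, attained near $s=5$) with $\ee^{\pi/12}\approx 1.2993$ is razor-thin, and the critical-point equation is transcendental, so ``locate the point by differentiation and bound the maximum'' is not executable in closed form; a rigorous enclosure of the maximum is needed and should be spelled out rather than asserted.
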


\begin{lemma}[Okayama et al.~{\cite[Lemma~5.3]{OkaNomuTsuru}}]
\label{lem:bound-SE-1p-log}
Let $d$ be a positive constant with $d<\pi$.
Then, we have
\begin{align*}
\sup_{\zeta\in\overline{\domD_d}}
\left|
\frac{1}{(1+\ee^{-\zeta})\log(1+\ee^{\zeta})}
\right|
&\leq \frac{1+\tilde{c}_d}{\log(2+\tilde{c}_d)},\\
\sup_{x\in\mathbb{R}}\frac{1}{(1+\ee^{-x})\log(1+\ee^x)}
&\leq 1,
\end{align*}
where $\tilde{c}_d=1/\cos(d/2)$.
\end{lemma}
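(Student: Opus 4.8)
The plan is to avoid bounding the two factors $1/(1+\ee^{-\zeta})$ and $1/\log(1+\ee^{\zeta})$ separately, since each blows up as $\Re\zeta\to-\infty$ while their product stays finite; a substitution that couples them is needed. Setting $s=\log(1+\ee^{\zeta})$, so that $\ee^{-s}=1/(1+\ee^{\zeta})$ and hence $1/(1+\ee^{-\zeta})=\ee^{\zeta}/(1+\ee^{\zeta})=1-\ee^{-s}$, the quantity to be bounded collapses to
\[
\frac{1}{(1+\ee^{-\zeta})\log(1+\ee^{\zeta})}=\frac{1-\ee^{-s}}{s}=\int_0^1 \ee^{-st}\,\dd t.
\]
First I would record the scalar function $\phi(r)=(1-\ee^{-r})/r=\int_0^1\ee^{-rt}\,\dd t$, which is strictly decreasing on $\mathbb{R}$ because $\phi'(r)=-\int_0^1 t\,\ee^{-rt}\,\dd t<0$, and which satisfies $\phi(0)=1$. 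Note also that $s$ is well defined via the principal logarithm on $\overline{\domD_d}$, since $1+\ee^{\zeta}$ avoids $(-\infty,0]$ when $|\Im\zeta|\le d<\pi$.

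Taking absolute values in the integral representation and using $\Re s=\log|1+\ee^{\zeta}|$ gives the central estimate
\[
\left|\frac{1}{(1+\ee^{-\zeta})\log(1+\ee^{\zeta})}\right|\le\int_0^1\ee^{-t\Re s}\,\dd t=\phi\bigl(\log|1+\ee^{\zeta}|\bigr),
\]
valid for every $\zeta\in\overline{\domD_d}$. The real-axis bound then falls out immediately: for $x\in\mathbb{R}$ the left-hand side equals $\phi(\log(1+\ee^{x}))$ with $\log(1+\ee^{x})>0$, so by monotonicity it is at most $\phi(0)=1$, which is the second assertion.

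For the strip bound I would feed in the lower bound on $|1+\ee^{\zeta}|$ supplied by Lemma~\ref{lem:bound-SE-1p-exp}: reciprocating its first inequality yields $|1+\ee^{\zeta}|\ge(1+\ee^{x})\cos(\Im\zeta/2)\ge\cos(d/2)=1/\tilde{c}_d$, hence $\log|1+\ee^{\zeta}|\ge-\log\tilde{c}_d$. Because $\phi$ is decreasing, the central estimate gives
\[
\left|\frac{1}{(1+\ee^{-\zeta})\log(1+\ee^{\zeta})}\right|\le\phi(-\log\tilde{c}_d)=\frac{\tilde{c}_d-1}{\log\tilde{c}_d}.
\]
Finally, since $\frac{1+\tilde{c}_d}{\log(2+\tilde{c}_d)}=\phi(-\log(2+\tilde{c}_d))$ and $\tilde{c}_d\le 2+\tilde{c}_d$, one last use of the monotonicity of $\phi$ gives $\phi(-\log\tilde{c}_d)\le\phi(-\log(2+\tilde{c}_d))=\frac{1+\tilde{c}_d}{\log(2+\tilde{c}_d)}$, as claimed.

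I expect the only genuinely non-obvious step to be the opening substitution $s=\log(1+\ee^{\zeta})$ combined with the representation $\int_0^1\ee^{-st}\,\dd t$: this is precisely what converts the modulus of a ratio of two separately unbounded quantities into a monotone scalar function of $\Re s=\log|1+\ee^{\zeta}|$, after which everything reduces to the elementary monotonicity of $\phi$ and the ready-made lower bound of Lemma~\ref{lem:bound-SE-1p-exp}. All downstream steps are routine; in fact this route yields the slightly sharper constant $(\tilde{c}_d-1)/\log\tilde{c}_d$, the stated $(1+\tilde{c}_d)/\log(2+\tilde{c}_d)$ following at once by monotonicity.
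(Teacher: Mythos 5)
This lemma is quoted in the paper from an external source ({\cite[Lemma~5.3]{OkaNomuTsuru}}) and no proof of it appears in the present manuscript, so there is no in-paper argument to compare against; I can only judge your proposal on its own terms, and it is correct. The substitution $s=\log(1+\ee^{\zeta})$ is legitimate (for $|\Im\zeta|\leq d<\pi$ the value $1+\ee^{\zeta}$ avoids $(-\infty,0]$ and is never $1$, so the principal branch is defined and $s\neq 0$), the identity $1/(1+\ee^{-\zeta})=1-\ee^{-s}$ is right, and the representation $(1-\ee^{-s})/s=\int_0^1\ee^{-st}\,\dd{t}$ correctly reduces everything to the decreasing entire function $\phi(r)=\int_0^1\ee^{-rt}\,\dd{t}$ evaluated at $\Re s=\log|1+\ee^{\zeta}|$. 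The real-axis bound $\phi(\log(1+\ee^{x}))\leq\phi(0)=1$ is immediate, and the strip bound follows from reciprocating Lemma~\ref{lem:bound-SE-1p-exp} to get $|1+\ee^{\zeta}|\geq\cos(d/2)=1/\tilde{c}_d$ (valid on the closed strip since $d<\pi$), giving $\phi(-\log\tilde{c}_d)=(\tilde{c}_d-1)/\log\tilde{c}_d$, which your final monotonicity step correctly shows is at most the stated constant $(1+\tilde{c}_d)/\log(2+\tilde{c}_d)=\phi(-\log(2+\tilde{c}_d))$. What this route buys is a genuinely sharper intermediate constant than the one quoted (e.g.\ it tends to $1$ rather than $2/\log 3$ as $d\to 0$), obtained by exploiting the cancellation between the two separately unbounded factors as $\Re\zeta\to-\infty$ instead of bounding numerator and denominator independently; the only external input is the same Lemma~\ref{lem:bound-SE-1p-exp} the paper already relies on. I see no gap.
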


\begin{lemma}
\label{lem:bound-SE3-log}
Let $d$ be a positive constant with $d<\pi$.
For all real numbers $x$ and $y$ with $|y|\leq d$,
we have
\begin{align*}
\left|
\log(\log(1+\ee^{x+\ii y}))
\right|
&\leq \frac{1+\tilde{c}_d}{\log(2+ \tilde{c}_d)}\sqrt{x^2+y^2} - \log(\log 2),
\\
\left|
\log(\log(1+\ee^{x}))
\right|
&\leq |x| - \log(\log 2),
\end{align*}
where $\tilde{c}_d=1/\cos(d/2)$.
\end{lemma}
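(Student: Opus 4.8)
The plan is to reuse the integral-representation technique already employed in the proofs of Lemmas~\ref{lem:bound-SE1-log} and~\ref{lem:bound-DE1-log}, now applied to the doubly-nested logarithm. First I would differentiate the quantity of interest. A direct computation gives
\[
\frac{\dd}{\dd\zeta}\log(\log(1+\ee^{\zeta}))
= \frac{1}{\log(1+\ee^{\zeta})}\cdot\frac{\ee^{\zeta}}{1+\ee^{\zeta}}
= \frac{1}{(1+\ee^{-\zeta})\log(1+\ee^{\zeta})},
\]
which is \emph{exactly} the function bounded in Lemma~\ref{lem:bound-SE-1p-log}. This identification is the crux of the argument: the genuinely delicate analytic estimate has already been established in that lemma, so the present proof reduces to an elementary integration.

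Next I would write, for $\zeta = x + \ii y$ with $|y|\le d$,
\[
\log(\log(1+\ee^{\zeta}))
= \log(\log 2) + \int_{0}^{\zeta}\frac{\dd t}{(1+\ee^{-t})\log(1+\ee^{t})},
\]
where the integral is taken along the straight segment from $0$ to $\zeta$ and the base value comes from $\log(1+\ee^{0})=\log 2$. Every point $s\zeta$ ($s\in[0,1]$) on this segment has imaginary part $s y$ with $|sy|\le|y|\le d$, so the path lies entirely in $\overline{\domD_d}$; the hypothesis $d<\pi$ ensures $1+\ee^{t}\neq 0$ and keeps $\log(1+\ee^{t})$ off the negative real axis, so the integrand is analytic along the contour. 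Applying the triangle inequality, the complex bound of Lemma~\ref{lem:bound-SE-1p-log}, and the $ML$-estimate (supremum of the integrand times the segment length $\sqrt{x^2+y^2}$) yields
\[
\bigl|\log(\log(1+\ee^{\zeta}))\bigr|
\le |\log(\log 2)| + \frac{1+\tilde{c}_d}{\log(2+\tilde{c}_d)}\sqrt{x^2+y^2}.
\]
The second inequality follows in the same way, integrating along the real axis from $0$ to $x$ and using the real bound $\le 1$ from the same lemma, so that the integral is at most $|x|$.

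The final step is purely arithmetic: since $\log 2 < 1$ we have $\log(\log 2) < 0$, hence $|\log(\log 2)| = -\log(\log 2)$, which turns the additive constant into the $-\log(\log 2)$ appearing in both claimed bounds. I do not anticipate a serious obstacle; the only matters requiring care are the choice of the straight-line contour (so that it stays inside $\overline{\domD_d}$ and has length exactly $\sqrt{x^2+y^2}$) and the analyticity of the nested logarithm along that contour, both of which are secured by $d<\pi$.
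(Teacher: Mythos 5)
Your proposal is correct and follows essentially the same route as the paper: differentiate the nested logarithm to recognize the integrand of Lemma~\ref{lem:bound-SE-1p-log}, integrate along the straight segment from $0$ to $x+\ii y$ (which stays in $\overline{\domD_d}$), apply the $ML$-estimate with length $\sqrt{x^2+y^2}$, and convert $|\log(\log 2)|$ to $-\log(\log 2)$. The real-axis case is likewise handled identically, so there is nothing to add.
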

\begin{proof}
From
$\{\log(\log(1+\ee^{\zeta}))\}'= 1/\{(1+\ee^{-\zeta})\log(1+\ee^{\zeta})\}$,
it holds that
\begin{align*}
 \int_0^{x+\ii y}\frac{1}{(1+\ee^{-\zeta})\log(1+\ee^{\zeta})}\dd{\zeta}
&=\left[\log(\log(1+\ee^{\zeta}))
\right]_{\zeta=0}^{\zeta=x+\ii y}\\
&=\log(\log(1+\ee^{x+\ii y})) - \log(\log 2).
\end{align*}
Using this equality and Lemma~\ref{lem:bound-SE-1p-log},
we have
\begin{align*}
 |\log(\log(1+\ee^{x+\ii y}))|
&=
\left|
\log(\log 2)+
\int_0^{x+\ii y}\frac{1}{(1+\ee^{-\zeta})\log(1+\ee^{\zeta})}\dd{\zeta}
\right|\\
&\leq |\log(\log 2)|
+
\int_0^{x+\ii y}
\left|\frac{1}{(1+\ee^{-\zeta})\log(1+\ee^{\zeta})}\right|
|\dd{\zeta}|\\
&\leq |\log(\log 2)|
+\int_0^{x+\ii y}\frac{1+\tilde{c}_d}{\log(2+\tilde{c}_d)}|\dd\zeta|\\
&= -\log(\log 2)+\frac{1+\tilde{c}_d}{\log(2+\tilde{c}_d)}\sqrt{x^2+y^2},
\end{align*}
which is the first inequality.
In the same manner, we have
\begin{align*}
 |\log(\log(1+\ee^{x}))|
&=\left|
\log(\log 2)+
\int_0^{x}\frac{1}{(1+\ee^{-t})\log(1+\ee^{t})}\dd{t}
\right|\\
&\leq -\log(\log 2)
+\int_0^{x}\frac{1}{(1+\ee^{-t})\log(1+\ee^{t})}|\dd{t}|\\
&\leq -\log(\log 2) + \int_0^x 1|\dd{t}|\\
&= -\log(\log 2) + |x|,
\end{align*}
which is the second inequality.
\end{proof}

Using these bounds,
we bound the discretization error as follows.

\begin{lemma}
\label{lem:SE3-discretization}
Let $K$, $\alpha$, $\beta$ and $d$ be positive constants with
$\alpha\leq 1$ and $d<\pi$.
Assume that $f$ is analytic on $\psi_3(\domD_d)$, and
satisfies~\eqref{eq:f-bound-log-algebraic-3} for all $z\in\psi_3(\domD_d)$.
Let $\mu=\min\{\alpha,\beta\}$. Then, putting
$F(x)=f(\psi_3(x))\psi_3'(x)$, we have~\eqref{eq:discretization-error},
where
\[
\mathcal{N}(F,d)
\leq \frac{4K\tilde{L}_d^{1-\alpha}}{\mu\cos^{\alpha+\beta}(d/2)}
\left\{\left(d + \frac{1}{\mu}\right)\frac{1+\tilde{c}_d}{\log(2+\tilde{c}_d)}
-\log(\log 2)
\right\},
\]
where $\tilde{c}_d=1/\cos(d/2)$
and $\tilde{L}_d$ is a constant defined by~\eqref{eq:tilde-L-d}.
\end{lemma}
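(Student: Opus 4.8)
The plan is to show that $F\in\mathbf{B}(\domD_d)$, so that the classical bound~\eqref{eq:discretization-error} applies with the stated value of $\mathcal{N}(F,d)$. The argument follows the same three-part template as Lemmas~\ref{lem:SE1-discretization} and~\ref{lem:SE2-discretization}---analyticity, the decay condition~\eqref{eq:F-imag-int-0}, and estimation of $\mathcal{N}(F,d)$---but the genuinely new ingredient is the algebraic factor $\abs{z/(1+z)}^{\alpha-1}$ appearing in~\eqref{eq:f-bound-log-algebraic-3}, whose treatment is the crux of the proof.

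First I would rewrite the hypothesis pointwise. With $z=\psi_3(\zeta)=\log(1+\ee^{\zeta})$ and $\psi_3'(\zeta)=1/(1+\ee^{-\zeta})$, the three factors in~\eqref{eq:f-bound-log-algebraic-3} become $z/(1+z)=\log(1+\ee^{\zeta})/(1+\log(1+\ee^{\zeta}))$, $\ee^{-z}=1/(1+\ee^{\zeta})$, and $\log z=\log(\log(1+\ee^{\zeta}))$. Because $\alpha\leq 1$ the exponent $\alpha-1$ is nonpositive, so I flip its sign and write the first factor as $\abs{(1+\log(1+\ee^{\zeta}))/\log(1+\ee^{\zeta})}^{1-\alpha}$, obtaining
\[
\abs{F(\zeta)}\leq K\abs{\frac{1+\log(1+\ee^{\zeta})}{\log(1+\ee^{\zeta})}}^{1-\alpha}\abs{\frac{1}{1+\ee^{\zeta}}}^{\beta}\abs{\frac{1}{1+\ee^{-\zeta}}}\,\abs{\log(\log(1+\ee^{\zeta}))}.
\]
The key step is then to split the single factor $1/(1+\ee^{-\zeta})$ as $\abs{1/(1+\ee^{-\zeta})}^{1-\alpha}\abs{1/(1+\ee^{-\zeta})}^{\alpha}$ and pair the first piece with the $(1-\alpha)$-power term. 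By the first inequality of Lemma~\ref{lem:bound-SE3-1p-log} this pairing is bounded by $\tilde{L}_d^{1-\alpha}$. Applying Lemma~\ref{lem:bound-SE-1p-exp} to the surviving factors $\abs{1/(1+\ee^{\zeta})}^{\beta}$ and $\abs{1/(1+\ee^{-\zeta})}^{\alpha}$, and Lemma~\ref{lem:bound-SE3-log} to the log-log term, yields for $\zeta=x+\ii y\in\domD_d$
\[
\abs{F(\zeta)}\leq\frac{K\tilde{L}_d^{1-\alpha}}{(1+\ee^{x})^{\beta}(1+\ee^{-x})^{\alpha}\cos^{\alpha+\beta}(y/2)}\left\{\frac{1+\tilde{c}_d}{\log(2+\tilde{c}_d)}\sqrt{x^2+y^2}-\log(\log 2)\right\},
\]
the analogue of~\eqref{eq:bound-F-SE1}; note $-\log(\log 2)>0$, so the brace is positive.

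With this bound the remainder is routine. Analyticity of $F$ on $\domD_d$ holds because $f(\psi_3(\cdot))$ is analytic there and $\psi_3'$ is analytic on $\domD_{\pi}\supset\domD_d$. For~\eqref{eq:F-imag-int-0} I would bound $\int_{-d}^{d}\abs{F(x+\ii y)}\dd{y}$ using the display above together with the monotonicity in $x$ of $(1+\ee^{x})^{-\beta}(1+\ee^{-x})^{-\alpha}$, exactly as in Lemma~\ref{lem:SE1-discretization}; the exponential decay dominates the $\sqrt{x^2+y^2}$ growth as $x\to\pm\infty$. To estimate $\mathcal{N}(F,d)$ I use $\cos(d/2)\leq\cos(y/2)$ and $\sqrt{x^2+y^2}\leq\sqrt{x^2+d^2}$ for $\abs{y}\leq d<\pi$, reduce both exponents to $\mu=\min\{\alpha,\beta\}$ via $(1+\ee^{x})^{\beta}(1+\ee^{-x})^{\alpha}\geq(1+\ee^{x})^{\mu}(1+\ee^{-x})^{\mu}$, and evaluate the two resulting integrals $\int_{-\infty}^{\infty}\sqrt{x^2+d^2}/\{(1+\ee^{x})^{\mu}(1+\ee^{-x})^{\mu}\}\dd{x}\leq(2/\mu)(d+1/\mu)$, via the integration by parts already carried out in Lemma~\ref{lem:SE2-discretization}, and $\int_{-\infty}^{\infty}1/\{(1+\ee^{x})^{\mu}(1+\ee^{-x})^{\mu}\}\dd{x}\leq 2/\mu$. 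Collecting constants gives precisely the claimed bound. The only real obstacle is the $(1-\alpha)$-power term; once it is absorbed into $\tilde{L}_d^{1-\alpha}$ by borrowing a factor from $\psi_3'$ through Lemma~\ref{lem:bound-SE3-1p-log}, the estimate reduces cleanly to the SE2 computation.
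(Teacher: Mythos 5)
Your proposal is correct and follows essentially the same route as the paper: absorb the $(1-\alpha)$-power factor into $\tilde{L}_d^{1-\alpha}$ via Lemma~\ref{lem:bound-SE3-1p-log} (your splitting of $1/(1+\ee^{-\zeta})$ into the $(1-\alpha)$ and $\alpha$ powers is algebraically identical to the paper's step $|{\cdot}|^{1-\alpha}\leq\{\tilde{L}_d|1+\ee^{-\zeta}|\}^{1-\alpha}$), then apply Lemmas~\ref{lem:bound-SE-1p-exp} and~\ref{lem:bound-SE3-log} to reach the same pointwise bound, and finish with the same $\mathcal{N}(F,d)$ integrals. The only cosmetic difference is that you evaluate the $\sqrt{x^2+d^2}$ and constant pieces as two separate integrals rather than one combined integration by parts; the resulting constant is identical.
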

\begin{proof}
It suffices to show that $F\in\mathbf{B}(\domD_d)$.
Because $f(\psi_3(\cdot))$ is analytic on $\domD_d$ and
$\psi_3'$ is analytic on $\domD_{\pi}$,
$F$ is analytic on $\domD_d$.
Next, we show~\eqref{eq:F-imag-int-0}.
From~\eqref{eq:f-bound-log-algebraic-3}
and Lemmas~\ref{lem:bound-SE3-1p-log},~\ref{lem:bound-SE-1p-exp}
and~\ref{lem:bound-SE3-log}, it holds
for $\zeta=x+\ii y\in \domD_d$ that
\begin{align}
 |F(\zeta)|&\leq
K\left|\frac{1+\log(1+\ee^{\zeta})}{\log(1+\ee^{\zeta})}\right|^{1-\alpha}
\frac{1}{|1+\ee^{\zeta}|^{\beta}}
\left|\log(\log(1+\ee^{\zeta}))\right|
\frac{1}{|1+\ee^{-\zeta}|}
\nonumber\\
&\leq K\left\{\tilde{L}_d|1+\ee^{-\zeta}|\right\}^{1-\alpha}
\frac{1}{|1+\ee^{\zeta}|^{\beta}}
\left|\log(\log(1+\ee^{\zeta}))\right|
\frac{1}{|1+\ee^{-\zeta}|}\nonumber\\
&=\frac{K\tilde{L}_d^{1-\alpha}}{|1+\ee^{-\zeta}|^{\alpha}|1+\ee^{\zeta}|^{\beta}}
\left|\log(\log(1+\ee^{\zeta}))\right|\nonumber\\
&\leq
\frac{K\tilde{L}_d^{1-\alpha}}{(1+\ee^{-x})^{\alpha}(1+\ee^{x})^{\beta}\cos^{\alpha+\beta}(y/2)}
\left(\tilde{\lambda}_d
\sqrt{x^2+y^2} - \log(\log 2)\right),
\label{eq:bound-F-SE3}
\end{align}
where we put $\tilde{\lambda}_d=(1+\tilde{c}_d)/\log(2+\tilde{c}_d)$.
Using this inequality, we have
\begin{align*}
&\int_{-d}^d\left|F(x + \ii y)\right|\dd y\\
&\leq
\frac{K\tilde{L}_d^{1-\alpha}\sqrt{x^2+1}}{(1+\ee^{-x})^{\alpha}(1+\ee^{x})^{\beta}}
\int_{-d}^d\frac{1}{\cos^{\alpha+\beta}(y/2)}
\left(\tilde{\lambda}_d\sqrt{\frac{x^2}{x^2+1}+\frac{y^2}{x^2+1}} + \frac{-\log(\log 2)}{\sqrt{x^2+1}}\right)\dd{y}\\
& <
\frac{K\tilde{L}_d^{1-\alpha}\sqrt{x^2+1}}{(1+\ee^{-x})^{\alpha}(1+\ee^{x})^{\beta}}
\int_{-d}^d\frac{1}{\cos^{\alpha+\beta}(y/2)}
\left(\tilde{\lambda}_d\sqrt{1+\frac{y^2}{0^2 + 1}} + \frac{-\log(\log 2)}{\sqrt{0^2 + 1}}\right)\dd{y}\\
&\to 0 \cdot
\int_{-d}^d\frac{1}{\cos^{\alpha+\beta}(y/2)}
\left(\tilde{\lambda}_d\sqrt{1+y^2} -\log(\log 2)\right)\dd{y}
\quad (x \to \pm\infty),
\end{align*}
which shows~\eqref{eq:F-imag-int-0}.
Finally, we estimate $\mathcal{N}(F,d)$.
Using~\eqref{eq:bound-F-SE3},
we have
\begin{align*}
&\int_{-\infty}^{\infty} \left\{|F(x + \ii y)|+|F(x - \ii y)|\right\}\dd x\\
&\leq\frac{K\tilde{L}_d^{1-\alpha}}{\cos^{\alpha+\beta}(y/2)}
\int_{-\infty}^{\infty}
\frac{\tilde{\lambda}_d\sqrt{x^2+y^2} - \log(\log 2)}{(1+\ee^{-x})^{\alpha}(1+\ee^{x})^{\beta}}
\dd{x}\\
&\quad +\frac{K\tilde{L}_d^{1-\alpha}}{\cos^{\alpha+\beta}(-y/2)}
\int_{-\infty}^{\infty}
\frac{\tilde{\lambda}_d\sqrt{x^2+(-y)^2} - \log(\log 2)}{(1+\ee^{-x})^{\alpha}(1+\ee^{x})^{\beta}}
\dd{x}\\
&\leq\frac{2K\tilde{L}_d^{1-\alpha}}{\cos^{\alpha+\beta}(d/2)}
\int_{-\infty}^{\infty}
\frac{\tilde{\lambda}_d\sqrt{x^2+d^2} - \log(\log 2)}{(1+\ee^{-x})^{\alpha}(1+\ee^{x})^{\beta}}
\dd{x},
\end{align*}
which holds for $y\in [-d,d]$ (note that $d<\pi$).
Therefore, the inequality remains valid
when taking the limit $y\to d - 0$.
Using $\mu=\min\{\alpha,\beta\}$,
we bound the integral as
\begin{align*}
\int_{-\infty}^{\infty}
\frac{\tilde{\lambda}_d\sqrt{x^2+d^2} - \log(\log 2)}{(1+\ee^{-x})^{\alpha}(1+\ee^{x})^{\beta}}
\dd{x}
&\leq \int_{-\infty}^{\infty}
\frac{\tilde{\lambda}_d\sqrt{x^2+d^2} - \log(\log 2)}{(1+\ee^{-x})^{\mu}
(1+\ee^{x})^{\mu}}
\dd{x}\\
&=2 \int_0^{\infty}
\frac{\tilde{\lambda}_d\sqrt{x^2+d^2} - \log(\log 2)}{(1+\ee^{-x})^{2\mu}}
\ee^{-\mu x}
\dd{x}\\
&\leq 2 \int_0^{\infty}
\frac{\tilde{\lambda}_d\sqrt{x^2+d^2} - \log(\log 2)}{(1+0)^{2\mu}}
\ee^{-\mu x}\dd x.
\end{align*}
Furthermore, using integration by parts, we have
\begin{align*}
&2 \int_0^{\infty}
\left(\tilde{\lambda}_d\sqrt{x^2+d^2} - \log(\log 2)\right)
\ee^{-\mu x}\dd x\\
&=2 \int_0^{\infty}
\left(\tilde{\lambda}_d\sqrt{x^2+d^2} - \log(\log 2)\right)
\left(\frac{\ee^{-\mu x}}{-\mu}\right)'\dd x\\
&=2\left[
\left(\tilde{\lambda}_d\sqrt{x^2+d^2} - \log(\log 2)\right)
\cdot\frac{\ee^{-\mu x}}{-\mu}
\right]_{x=0}^{x=\infty}
+\frac{2\tilde{\lambda}_d}{\mu}\int_0^{\infty}
\frac{x}{\sqrt{x^2+d^2}}\ee^{-\mu x}\dd{x}\\
&\leq\frac{2\left(\tilde{\lambda}_d d - \log(\log 2)\right)}{\mu}
+\frac{2\tilde{\lambda}_d}{\mu}\int_0^{\infty}
\frac{x}{\sqrt{x^2 + 0}}\ee^{-\mu x}\dd{x}\\
&=\frac{2\left(\tilde{\lambda}_d d - \log(\log 2)\right)}{\mu}
+\frac{2\tilde{\lambda}_d}{\mu^2}.
\end{align*}
Thus, we obtain the conclusion.
\end{proof}

Next, we bound the truncation error as follows.

\begin{lemma}
\label{lem:SE3-truncation}
Let $K$, $\alpha$ and $\beta$ be positive constants with $\alpha\leq 1$.
Assume that $f$ satisfies~\eqref{eq:f-bound-log-algebraic-3}
for all $z\in (0, \infty)$.
Let $\mu=\min\{\alpha,\beta\}$,
let $n$ be positive integer,
and
let $M$ and $N$ be selected by~\eqref{eq:def-MN-SE}.
Let $Mh\geq 1/\alpha$ and $Nh\geq 1/\beta$ be satisfied.
Then, putting $F(x)=f(\psi_3(x))\psi_3'(x)$, we have
\begin{align*}
\left|
h\sum_{k=-\infty}^{-M-1}F(kh)
+h\sum_{k=N+1}^{\infty}F(kh)
\right|
\leq \frac{2K\ee^{\pi(1-\alpha)/12}}{\mu}
\left(nh - \log(\log 2) + \frac{1}{\mu}\right)
\ee^{-\mu n h}.
\end{align*}
\end{lemma}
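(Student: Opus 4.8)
The plan is to mirror the structure of the proofs of Lemmas~\ref{lem:SE1-truncation} and~\ref{lem:SE2-truncation}: first obtain a pointwise bound on $\abs{F(x)}$ valid on the real axis, then split the truncated tails into a left sum ($k\leq -M-1$) and a right sum ($k\geq N+1$), bound each by an integral through the monotonicity supplied by Proposition~\ref{prop:SE1-monotone}, evaluate the integrals by parts, and finally absorb $\alpha,\beta$ into $\mu$ via the selection~\eqref{eq:def-MN-SE}.

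First I would derive the real-axis estimate. On $(0,\infty)$ we have $\psi_3(x)=\log(1+\ee^x)$ and $\psi_3'(x)=1/(1+\ee^{-x})$, so from~\eqref{eq:f-bound-log-algebraic-3} and $\alpha\leq 1$ the factor $\abs{z/(1+z)}^{\alpha-1}=\{(1+\log(1+\ee^x))/\log(1+\ee^x)\}^{1-\alpha}$ can be controlled by the \emph{real-line} bound of Lemma~\ref{lem:bound-SE3-1p-log} (the one with $\ee^{\pi/12}$ rather than $\tilde{L}_d$), while $\abs{\log z}=\abs{\log(\log(1+\ee^x))}$ is handled by the second inequality of Lemma~\ref{lem:bound-SE3-log}. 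Combining these with $\psi_3'(x)$ yields
\[
\abs{F(x)}
\leq \frac{K\ee^{\pi(1-\alpha)/12}}{(1+\ee^{-x})^{\alpha}(1+\ee^{x})^{\beta}}
\left(\abs{x}-\log(\log 2)\right),
\]
which is the analogue of setting $y=0$ in~\eqref{eq:bound-F-SE3} but with the sharper constant $\ee^{\pi(1-\alpha)/12}$ in place of $\tilde{L}_d^{1-\alpha}$ and coefficient $1$ in front of $\abs{x}$. Note that $-\log(\log 2)>0$ since $\log 2<1$; this sign will matter below.

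Next I would handle the two tails separately. For $x<0$ the product $1/\{(1+\ee^{-x})^{\alpha}(1+\ee^{x})^{\beta}\}$ equals $\ee^{\alpha x}/(1+\ee^{x})^{\alpha+\beta}\leq \ee^{\alpha x}$ and $\abs{x}=-x$, so the left summand is bounded by $K\ee^{\pi(1-\alpha)/12}(-x-\log(\log 2))\ee^{\alpha x}$; for $x>0$ it is bounded by $K\ee^{\pi(1-\alpha)/12}(x-\log(\log 2))\ee^{-\beta x}$. The piece $-x\ee^{\alpha x}$ is exactly $G_{-}(x)$ and $x\ee^{-\beta x}$ is $G_{+}(x)$ from Proposition~\ref{prop:SE1-monotone}, while the added term $-\log(\log 2)$ multiplies a pure exponential, which is monotone on its own and, being positive, does not spoil the required monotonicity. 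Hence under the hypotheses $Mh\geq 1/\alpha$ and $Nh\geq 1/\beta$ each sum is dominated by the corresponding integral, and a routine integration by parts gives $(1/\alpha)(Mh+1/\alpha-\log(\log 2))\ee^{-\alpha Mh}$ and $(1/\beta)(Nh+1/\beta-\log(\log 2))\ee^{-\beta Nh}$ respectively.

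Finally I would replace $\alpha$ and $\beta$ by $\mu$ (all quantities being positive), and invoke~\eqref{eq:def-MN-SE} exactly as in Lemma~\ref{lem:SE1-truncation}: $Mh\leq nh$, $Nh\leq nh$, $\ee^{-\alpha Mh}\leq\ee^{-\mu nh}$ and $\ee^{-\beta Nh}\leq\ee^{-\mu nh}$. Adding the two tails then produces the factor $2$ and the asserted bound $\frac{2K\ee^{\pi(1-\alpha)/12}}{\mu}(nh-\log(\log 2)+1/\mu)\ee^{-\mu nh}$. The step that needs the most care is the very first one: one must use the \emph{real-line} versions of Lemmas~\ref{lem:bound-SE3-1p-log} and~\ref{lem:bound-SE3-log} (not the domain versions used for the discretization error in Lemma~\ref{lem:SE3-discretization}) in order to obtain the constant $\ee^{\pi(1-\alpha)/12}$, and the assumption $\alpha\leq 1$ is essential so that the exponent $1-\alpha$ is nonnegative and the estimate of $\abs{z/(1+z)}^{\alpha-1}$ is applied in the correct direction.
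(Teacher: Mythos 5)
Your proposal is correct and follows essentially the same route as the paper: the same real-axis pointwise bound $\lvert F(x)\rvert\leq K\ee^{\pi(1-\alpha)/12}(\lvert x\rvert-\log(\log 2))/\{(1+\ee^{-x})^{\alpha}(1+\ee^{x})^{\beta}\}$ obtained from the real-line versions of Lemmas~\ref{lem:bound-SE3-1p-log} and~\ref{lem:bound-SE3-log}, the same sum-to-integral comparison via Proposition~\ref{prop:SE1-monotone}, the same integration by parts, and the same final absorption of $\alpha,\beta$ into $\mu$ using~\eqref{eq:def-MN-SE}. Your explicit remark that the $-\log(\log 2)$ term is positive and multiplies a monotone exponential is a slightly more careful justification of the monotonicity step than the paper spells out, but the argument is the same.
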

\begin{proof}
From~\eqref{eq:f-bound-log-algebraic-3}
and Lemmas~\ref{lem:bound-SE3-1p-log} and~\ref{lem:bound-SE3-log},
it holds that
\begin{align*}
 |F(x)|&\leq K
\left|\frac{\log(1+\ee^{x})}{1+\log(1+\ee^x)}\right|^{\alpha - 1}
\left|\ee^{-\log(1+\ee^{x})}\right|^{\beta}
|\log(\log(1+\ee^x))|
\left|\frac{1}{1+\ee^{-x}}\right|\\
&=K\left\{\frac{1+\log(1+\ee^x)}{\log(1+\ee^x)}\right\}^{1-\alpha}
\frac{1}{(1+\ee^x)^{\beta}}|\log(\log(1+\ee^x))|
\frac{1}{1+\ee^{-x}}\\
&\leq K\left\{\ee^{\pi/12}(1+\ee^{-x})\right\}^{1-\alpha}
\frac{1}{(1+\ee^x)^{\beta}}
\left\{|x| - \log(\log 2)\right\}
\frac{1}{1+\ee^{-x}}\\
&=\frac{K\ee^{\pi(1-\alpha)/12}\left\{|x|-\log(\log 2)\right\}}
{(1+\ee^{-x})^{\alpha}(1+\ee^x)^{\beta}}.
\end{align*}
Using this inequality, we have
\begin{align*}
&\left|
h\sum_{k=-\infty}^{-M-1}F(kh)
+h\sum_{k=N+1}^{\infty}F(kh)
\right|\\
&\leq K\ee^{\pi(1-\alpha)/12}\left[
h\sum_{k=-\infty}^{-M-1}
\frac{|kh|-\log(\log 2)}
{(1+\ee^{-kh})^{\alpha}(1+\ee^{kh})^{\beta}}
+ h\sum_{k=N+1}^{\infty}
\frac{|kh|-\log(\log 2)}
{(1+\ee^{-kh})^{\alpha}(1+\ee^{kh})^{\beta}}
\right]\\
&=K\ee^{\pi(1-\alpha)/12}\left[
 h\sum_{k=-\infty}^{-M-1}
\frac{-kh-\log(\log 2)}
{(1+\ee^{kh})^{\alpha+\beta}}\ee^{\alpha kh}
+ h\sum_{k=N+1}^{\infty}
\frac{kh-\log(\log 2)}
{(1+\ee^{-kh})^{\alpha+\beta}}\ee^{-\beta k h}
\right]\\
&\leq K\ee^{\pi(1-\alpha)/12}\left[
 h\sum_{k=-\infty}^{-M-1}
\frac{-kh-\log(\log 2)}
{(1+0)^{\alpha+\beta}}\ee^{\alpha kh}
+ h\sum_{k=N+1}^{\infty}
\frac{kh-\log(\log 2)}
{(1+0)^{\alpha+\beta}}\ee^{-\beta k h}
\right].
\end{align*}
Using Proposition~\ref{prop:SE1-monotone},
for $Mh\geq 1/\alpha$ and $Nh\geq 1/\beta$,
we further bound the sums as
\begin{align*}
&K\ee^{\pi(1-\alpha)/12}\left[
h\sum_{k=-\infty}^{-M-1}
\left\{-kh-\log(\log 2)\right\}\ee^{\alpha k h}
+ h\sum_{k=N+1}^{\infty}
\left\{kh-\log(\log 2)\right\}
\ee^{-\beta k h}\right]\\
&\leq K\ee^{\pi(1-\alpha)/12}\left[
\int_{-\infty}^{-Mh}
\left\{-x-\log(\log 2)\right\}\ee^{\alpha x}\dd{x}
+ \int_{Nh}^{\infty}
\left\{x-\log(\log 2)\right\}
\ee^{-\beta x}\dd{x}\right]\\
&=K\ee^{\pi(1-\alpha)/12}\left[
\left(Mh -\log(\log 2) + \frac{1}{\alpha}\right)\frac{\ee^{-\alpha Mh}}{\alpha}
+\left(Nh -\log(\log 2) + \frac{1}{\beta}\right)\frac{\ee^{-\beta Nh}}{\beta}\right]\\
&\leq K\ee^{\pi(1-\alpha)/12}\left[
\left(Mh -\log(\log 2) + \frac{1}{\mu}\right)\frac{\ee^{-\alpha Mh}}{\mu}
+\left(Nh -\log(\log 2) + \frac{1}{\mu}\right)\frac{\ee^{-\beta Nh}}{\mu}\right],
\end{align*}
where $\mu=\min\{\alpha,\beta\}$ is used
at the last inequality.
Finally, using~\eqref{eq:def-MN-SE},
we have $Mh\leq nh$ (because $M\leq n$),
$Nh\leq nh$ (because $N\leq n$),
$\ee^{-\alpha M h}\leq \ee^{-\mu n h}$
(because $\alpha M\geq \mu n$)
and $\ee^{-\beta N h}\leq \ee^{-\mu n h}$ (because $\beta N\geq \mu n$),
from which we obtain the conclusion.
\end{proof}

We are now in a position to prove Theorem~\ref{thm:new-SE3}.
Note that if $n\geq 1/(2\pi d \mu)$,
then with $h$ selected by~\eqref{eq:standard-h-SE},
the inequalities~\eqref{eq:Mh-bounded-by-1-alpha}
and~\eqref{eq:Nh-bounded-by-1-beta} hold.
Therefore,
from Lemmas~\ref{lem:SE3-discretization}
and~\ref{lem:SE3-truncation},
substituting~\eqref{eq:standard-h-SE} into $h$,
we have
\begin{align*}
\left|
\int_{-\infty}^{\infty}F(x)\dd{x}
- h \sum_{k=-M}^N F(kh)
\right| \leq C(n) \sqrt{n}\ee^{-\sqrt{2\pi d \mu n}},
\end{align*}
where
\begin{align*}
 C(n)
&=\frac{2 K}{\mu^2}
\left[
\left(\frac{2\tilde{L}_d^{1-\alpha}(\mu d + 1)}{\cos^{\alpha+\beta}(d/2)}
\cdot\frac{1+\tilde{c}_d}{\log(2+\tilde{c}_d)} - \mu\log(\log 2)
\right)\frac{1}{\sqrt{n}(1 - \ee^{-\sqrt{2\pi d \mu n}})}\right.\\
&\left.\quad\quad\quad
+\ee^{\pi(1-\alpha)/12}\left(\sqrt{2\pi d \mu}
+\frac{1 - \mu\log(\log 2)}{\sqrt{n}}
\right)
\right].
\end{align*}
Furthermore, $C(n)\leq C(1)$ holds, which completes the proof of
Theorem~\ref{thm:new-SE3}.

\subsubsection{Proof of Theorem~\ref{thm:new-DE3}}

For Theorem~\ref{thm:new-DE3},
the following bounds are required.

\begin{lemma}[Okayama~{\cite[Lemma~7]{okayama13:_error_sinc_sinc}}]
\label{lem:bound-DE3-1p-log}
Let $d$ be a positive constant with $d<\pi/2$.
Then, we have
\begin{align*}
\sup_{\zeta\in\overline{\domD_d}}
\left|
\frac{1 + \log(1+\ee^{\pi\sinh\zeta})}{\log(1+\ee^{\pi\sinh\zeta})}
\cdot\frac{1}{1+\ee^{-\pi\sinh\zeta}}
\right|
&\leq L_d,\\
\sup_{x\in\mathbb{R}}
\left|
\frac{1 + \log(1+\ee^{\pi\sinh x})}{\log(1+\ee^{\pi\sinh x})}
\cdot\frac{1}{1+\ee^{-\pi\sinh x}}
\right|
&\leq \ee^{\pi/12},
\end{align*}
where $L_d$ is a constant defined by~\eqref{eq:L-d}.
\end{lemma}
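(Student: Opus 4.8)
The plan is to establish the two inequalities separately, reducing the real-line bound to the single-exponential case already available in Lemma~\ref{lem:bound-SE3-1p-log}, and handling the complex-strip bound by a decomposition adapted to the double-exponential transformation. For the second inequality I would observe that $t=\pi\sinh x$ is a bijection of $\mathbb{R}$ onto itself, so that
\[
\sup_{x\in\mathbb{R}}\left|\frac{1+\log(1+\ee^{\pi\sinh x})}{\log(1+\ee^{\pi\sinh x})}\cdot\frac{1}{1+\ee^{-\pi\sinh x}}\right|=\sup_{t\in\mathbb{R}}\left|\frac{1+\log(1+\ee^{t})}{\log(1+\ee^{t})}\cdot\frac{1}{1+\ee^{-t}}\right|,
\]
and the right-hand side is at most $\ee^{\pi/12}$ by the real-line inequality of Lemma~\ref{lem:bound-SE3-1p-log}; this disposes of the second inequality at once.

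For the first inequality I would set $w=\pi\sinh\zeta$ and use the identity
\[
\frac{1+\log(1+\ee^{w})}{\log(1+\ee^{w})}\cdot\frac{1}{1+\ee^{-w}}=\frac{1}{(1+\ee^{-w})\log(1+\ee^{w})}+\frac{1}{1+\ee^{-w}}=:P+Q,
\]
which separates a term $P$ that stays bounded even as $\Re w\to-\infty$ from the term $Q$. I would bound $Q=1-1/(1+\ee^{w})$ via Lemma~\ref{lem:bound-DE-1p-exp}: since $1+\ee^{\pi\sinh(x)\cos y}\geq1$ and $\cos((\pi/2)\sin y)\geq\cos((\pi/2)\sin d)=1/c_d$ for $|y|\leq d<\pi/2$, one gets $|1/(1+\ee^{w})|\leq c_d$, hence $|Q|\leq 1+c_d$. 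For $P$ I would pass to $s=\log(1+\ee^{w})$, under which $P=(1-\ee^{-s})/s=\int_0^1\ee^{-ts}\dd{t}$, so $|P|\leq\int_0^1\ee^{-t\Re s}\dd{t}$. Taking reciprocals in Lemma~\ref{lem:bound-DE-1p-exp} gives $|1+\ee^{w}|\geq(1+\ee^{\pi\sinh(x)\cos y})\cos((\pi/2)\sin y)\geq1/c_d$, so that $\Re s=\log|1+\ee^{w}|\geq-\log c_d$ uniformly on $\overline{\domD_d}$; therefore $|P|\leq\int_0^1\ee^{t\log c_d}\dd{t}=(c_d-1)/\log c_d$. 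Combining these with the elementary inequality $(c_d-1)/\log c_d\leq(1+c_d)/\log(2+c_d)$, valid for $c_d\geq1$, yields
\[
\left|\frac{1+\log(1+\ee^{w})}{\log(1+\ee^{w})}\cdot\frac{1}{1+\ee^{-w}}\right|\leq\frac{1+c_d}{\log(2+c_d)}+(1+c_d)=L_d,
\]
with $L_d$ as in~\eqref{eq:L-d}.

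The main obstacle is the control of $P$. In contrast to the single-exponential setting, the map $\zeta\mapsto\pi\sinh\zeta$ does \emph{not} send $\overline{\domD_d}$ into a horizontal strip, because its imaginary part $\pi\cosh x\sin y$ grows like $\cosh x$; consequently one cannot reduce the complex bound directly to the strip inequality of Lemma~\ref{lem:bound-SE3-1p-log}, and the integral-representation argument for $P$ becomes necessary. Its crux is the uniform lower bound $\Re\log(1+\ee^{\pi\sinh\zeta})\geq-\log c_d$, and here the hypothesis $d<\pi/2$ plays the decisive role: it is exactly what keeps $w=\pi\sinh\zeta$ away from the branch point $w=\ii\pi$ of $\log(1+\ee^{w})$, keeping both this bound and the constant $c_d$ finite. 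Note also that, since the estimate for $|P|$ uses only $\Re s=\log|1+\ee^{w}|$, it is insensitive to which branch of the logarithm is taken along the oscillating image. After that, verifying the scalar inequality $(c_d-1)/\log c_d\leq(1+c_d)/\log(2+c_d)$ is the only routine computation left.
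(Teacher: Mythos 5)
Your proof is correct, but note that the paper does not actually prove this lemma: it is imported verbatim from Okayama (2013, Lemma~7) via citation, so there is no in-paper argument to compare against. Your reduction of the real-line bound to Lemma~\ref{lem:bound-SE3-1p-log} through the bijection $t=\pi\sinh x$ is exactly right and immediate. For the strip bound, your decomposition into $P+Q$ is sound: the bound $|Q|\leq 1+c_d$ follows from Lemma~\ref{lem:bound-DE-1p-exp} as you say, and your treatment of $P$ via $P=(1-\ee^{-s})/s=\int_0^1\ee^{-ts}\,\dd t$ with $\Re s=\log|1+\ee^{\pi\sinh\zeta}|\geq-\log c_d$ is valid and branch-independent (and $s$ never vanishes on the strip, since $\ee^{s}=1+\ee^{\pi\sinh\zeta}\neq 1$, so the representation is legitimate everywhere). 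Two remarks. First, the quantity $P$ is precisely the function bounded in Lemma~\ref{lem:bound-DE-1p-log} (also stated, without proof, in the paper), so you could have obtained $|P|\leq(1+c_d)/\log(2+c_d)$ by citation and skipped both the integral representation and the scalar inequality; your route instead yields the sharper intermediate bound $(c_d-1)/\log c_d$ and then relaxes it. Second, the relaxation step $(c_d-1)/\log c_d\leq(1+c_d)/\log(2+c_d)$, which you leave as routine, does hold: both sides equal $(u-1)/\log u$ at $u=c_d$ and $u=c_d+2$ respectively, and $u\mapsto(u-1)/\log u$ is increasing on $(1,\infty)$ because $\log u>1-1/u$ there. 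With that filled in, the two pieces sum exactly to $L_d=\dfrac{1+c_d}{\log(2+c_d)}+(1+c_d)$, matching \eqref{eq:L-d}.
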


\begin{lemma}[Okayama et al.~{\cite[Lemma~6.12]{OkaHaraGoto}}]
\label{lem:bound-DE-1p-log}
Let $d$ be a positive constant with $d<\pi/2$.
Then, it holds that
\begin{align*}
\sup_{\zeta\in\overline{\domD_d}}
\left|
\frac{1}{(1+\ee^{-\pi\sinh\zeta})\log(1+\ee^{\pi\sinh\zeta})}
\right|
&\leq \frac{1+c_d}{\log(2+c_d)},
\end{align*}
where $c_d=1/\cos((\pi/2)\sin d)$.
\end{lemma}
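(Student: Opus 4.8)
The plan is to establish the bound by mirroring the argument for its single-exponential counterpart, Lemma~\ref{lem:bound-SE-1p-log}, under the substitution $w=\pi\sinh\zeta$. Writing $G(w)=1/\{(1+\ee^{-w})\log(1+\ee^{w})\}$, the quantity to be controlled is $|G(\pi\sinh\zeta)|$ on $\overline{\domD_d}$. First I would verify that $G(\pi\sinh\zeta)$ is analytic on $\overline{\domD_d}$: the only singularities of $G$ occur where $1+\ee^{w}=0$, i.e.\ $\Re w=0$ and $\Im w\in\{\pm\pi,\pm3\pi,\dots\}$. For $\zeta=x+\ii y$ with $|y|\leq d<\pi/2$ one has $\Re(\pi\sinh\zeta)=\pi\sinh(x)\cos y$ and $\Im(\pi\sinh\zeta)=\pi\cosh(x)\sin y$, so $\Re(\pi\sinh\zeta)=0$ forces $x=0$, whence $|\Im(\pi\sinh\zeta)|=\pi|\sin y|<\pi$. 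Thus the singular set is avoided and $G(\pi\sinh\zeta)$ is analytic and bounded on the closed strip.

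Next I would split the modulus as
\[
 |G(\pi\sinh\zeta)|
 =\frac{1}{|1+\ee^{-\pi\sinh\zeta}|}\cdot\frac{1}{|\log(1+\ee^{\pi\sinh\zeta})|}
\]
and control the first factor by Lemma~\ref{lem:bound-DE-1p-exp}, which gives $1/|1+\ee^{-\pi\sinh\zeta}|\leq 1/\{(1+\ee^{-\pi\sinh(x)\cos y})\cos((\pi/2)\sin y)\}$. This is exactly the step that introduces the factor $1/\cos((\pi/2)\sin y)$ and hence, after passing to the worst case $|y|=d$, the constant $c_d=1/\cos((\pi/2)\sin d)$. Note that the second estimate of Lemma~\ref{lem:bound-DE-1p-exp}, read as the lower bound $|1+\ee^{\pi\sinh\zeta}|\geq(1+\ee^{\pi\sinh(x)\cos y})\cos((\pi/2)\sin y)$, is the companion inequality I would use to handle the modulus of the logarithm below.

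The crux is a lower bound on $|\log(1+\ee^{\pi\sinh\zeta})|$. Here I would exploit $\Re\log(1+\ee^{w})=\log|1+\ee^{w}|$ together with the lower bound on $|1+\ee^{\pi\sinh\zeta}|$ just mentioned, supplemented by the integral representation $\{\log\log(1+\ee^{w})\}'=G(w)$ that is already used in the proof of Lemma~\ref{lem:bound-SE3-log}, so as to reduce $|\log(1+\ee^{\pi\sinh\zeta})|$ to the one-variable quantity $\log(1+\ee^{\pi\sinh(x)\cos y})$ and the factor $\cos((\pi/2)\sin y)$. Combining the two factors and maximizing the resulting expression over $x\in\mathbb{R}$ and $|y|\leq d$ should then yield the closed form $(1+c_d)/\log(2+c_d)$. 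I expect this final step to be the main obstacle: lower bounds on $|\log(\cdot)|$ are delicate because the logarithm can be small, so one must simultaneously track $\log|1+\ee^{w}|$ and $\arg(1+\ee^{w})$ over the curved image $w=\pi\sinh\zeta$ and confirm that the one-variable maximization is governed by the boundary $y=d$, where $c_d$ emerges.
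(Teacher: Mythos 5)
The paper does not prove this lemma at all: it is imported verbatim from Okayama et al.\ \cite[Lemma~6.12]{OkaHaraGoto}, so there is no in-paper argument to compare yours against. Judged on its own terms, your proposal has a genuine gap, and it sits exactly at the step you yourself flag as ``the main obstacle.'' The strategy of splitting
\[
\left|\frac{1}{(1+\ee^{-\pi\sinh\zeta})\log(1+\ee^{\pi\sinh\zeta})}\right|
=\frac{1}{|1+\ee^{-\pi\sinh\zeta}|}\cdot\frac{1}{|\log(1+\ee^{\pi\sinh\zeta})|}
\]
and bounding the two factors separately cannot succeed, because the second factor is unbounded on $\overline{\domD_d}$: already on the real axis, $\log(1+\ee^{\pi\sinh x})\to 0^{+}$ as $x\to-\infty$, so $1/|\log(1+\ee^{\pi\sinh x})|\to\infty$. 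The product stays bounded only because this vanishing is cancelled by the simultaneous blow-up of $1+\ee^{-\pi\sinh\zeta}$; any correct proof must treat the product jointly, e.g.\ by rewriting it as $\ee^{w}/\{(1+\ee^{w})\log(1+\ee^{w})\}$ with $w=\pi\sinh\zeta$ and using an integral representation such as $v/\log(1+v)=\int_0^1(1+v)^s\,\dd{s}$ to control $v/\log(1+v)$ in terms of a lower bound on $|1+v|$, which is where Lemma~\ref{lem:bound-DE-1p-exp} and hence $c_d$ enter. Your proposed substitute, $|\log(1+\ee^{w})|\geq\bigl|\log|1+\ee^{w}|\bigr|=\bigl|\Re\log(1+\ee^{w})\bigr|$, is vacuous precisely where it is needed: it degenerates to $0$ whenever $|1+\ee^{w}|$ is near $1$, which happens throughout the problematic region where $\Re(\pi\sinh\zeta)\to-\infty$.

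Two smaller points. First, the identity $\{\log(\log(1+\ee^{w}))\}'=1/\{(1+\ee^{-w})\log(1+\ee^{w})\}$ that you invoke runs in the wrong direction: in this paper it is used (in Lemma~\ref{lem:bound-DE3-log}) to deduce bounds on $\log(\log(1+\ee^{\pi\sinh\zeta}))$ \emph{from} the present lemma, not to prove it. Second, one cannot reduce to the SE analogue (Lemma~\ref{lem:bound-SE-1p-log}) via the substitution $w=\pi\sinh\zeta$, because the image of $\overline{\domD_d}$ under $\pi\sinh$ is not contained in any strip $\overline{\domD_{d'}}$ with $d'<\pi$ (indeed $\Im(\pi\sinh(x+\ii y))=\pi\cosh(x)\sin y$ is unbounded for $y\neq 0$), so the SE supremum does not apply to the composed map. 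Your analyticity check for the composition on the simply connected strip is fine, but it is the boundedness, not the analyticity, that carries the content of the lemma.
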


\begin{lemma}
\label{lem:bound-DE3-log}
Let $d$ be a positive constant with $d<\pi/2$.
For all real numbers $x$ and $y$ with $|y|\leq d$,
we have
\begin{align*}
\left|
\log(\log(1+\ee^{\pi\sinh(x+\ii y)}))
\right|
&\leq \frac{\pi(1+c_d)}{\log(2+ c_d)}\left\{1+|y|\right\}\cosh x - \log(\log 2),
\\
\left|
\log(\log(1+\ee^{\pi\sinh x}))
\right|
&\leq |\pi\sinh x| - \log(\log 2),
\end{align*}
where $c_d=1/\cos((\pi/2)\sin d)$.
\end{lemma}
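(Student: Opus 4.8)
The plan is to mirror the proof of Lemma~\ref{lem:bound-SE3-log}, with the SE map replaced by the DE map $\zeta\mapsto\pi\sinh\zeta$. First I would record, by the chain rule together with $(\log(1+\ee^{w}))'=1/(1+\ee^{-w})$, the derivative
\[
 \{\log(\log(1+\ee^{\pi\sinh\zeta}))\}'
 = \frac{\pi\cosh\zeta}{(1+\ee^{-\pi\sinh\zeta})\log(1+\ee^{\pi\sinh\zeta})}.
\]
Since $\sinh 0=0$ gives $\log(\log(1+\ee^{\pi\sinh 0}))=\log(\log 2)$, integrating this derivative along a contour from $0$ to $x+\ii y$ yields
\[
 \log(\log(1+\ee^{\pi\sinh(x+\ii y)}))
 = \log(\log 2)
 + \int_0^{x+\ii y}\frac{\pi\cosh\zeta}{(1+\ee^{-\pi\sinh\zeta})\log(1+\ee^{\pi\sinh\zeta})}\dd\zeta.
\]

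The key decision is the choice of contour. Rather than the straight segment used in the SE case, I would integrate along the horizontal segment from $0$ to $x$ followed by the vertical segment from $x$ to $x+\ii y$; this contour stays inside $\overline{\domD_d}$ because $|y|\leq d$, so Lemma~\ref{lem:bound-DE-1p-log} applies at every point and bounds the $1/\log$ factor by $(1+c_d)/\log(2+c_d)$. Applying the triangle inequality and using $|\log(\log 2)|=-\log(\log 2)$ (as $\log 2<1$), the estimate reduces to controlling $\int|\cosh\zeta|\,|\dd\zeta|$ over the two segments.

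On the horizontal segment $|\cosh\zeta|=\cosh t$ integrates to $\sinh|x|\leq\cosh x$, and on the vertical segment the identity $|\cosh(x+\ii s)|^2=\sinh^2 x+\cos^2 s$ gives $|\cosh(x+\ii s)|\leq\cosh x$, so that segment contributes at most $|y|\cosh x$. Summing the two contributions produces exactly $(1+|y|)\cosh x$, which multiplied by $\pi(1+c_d)/\log(2+c_d)$ yields the first claimed inequality. For the second inequality I would take $y=0$ and integrate along the real axis, where the substitution $u=\pi\sinh x$ lets me invoke the real-line bound $1/\{(1+\ee^{-u})\log(1+\ee^u)\}\leq 1$ from the second part of Lemma~\ref{lem:bound-SE-1p-log}; then $\int_0^{|x|}\pi\cosh t\,\dd t=\pi\sinh|x|=|\pi\sinh x|$ gives the result.

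The main obstacle is precisely this contour choice and the vertical-segment estimate: one must verify both that the horizontal-then-vertical path remains inside the closed strip $\overline{\domD_d}$ (so that Lemma~\ref{lem:bound-DE-1p-log} is valid along the whole contour) and that $|\cosh(x+\ii s)|\leq\cosh x$ holds uniformly in $s$. It is this uniform bound that keeps the $\cosh x$ factor clean and produces the additive $(1+|y|)$ structure, whereas a slanted straight segment would give a less tractable weighted path length that does not separate into the factored form demanded by the statement.
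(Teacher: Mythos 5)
Your proposal is correct and follows essentially the same route as the paper: the same derivative identity, the same horizontal-then-vertical contour from $0$ to $x$ to $x+\ii y$, the same appeal to Lemma~\ref{lem:bound-DE-1p-log} on that contour, and the same bounds $\int_0^{x}\cosh t\,|\dd t|=|\sinh x|\leq\cosh x$ and $|\cosh(x+\ii u)|\leq\cosh x$ yielding the $(1+|y|)\cosh x$ factor. The only cosmetic difference is in the second inequality, where the paper simply substitutes $\pi\sinh x$ for $x$ in the already-proved second inequality of Lemma~\ref{lem:bound-SE3-log} rather than re-running the integral with a change of variables; both are equivalent.
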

\begin{proof}
From
\[
 \left\{\log(\log(1+\ee^{\pi\sinh\zeta}))\right\}'
= \frac{\pi\cosh\zeta}{(1+\ee^{-\pi\sinh\zeta})\log(1+\ee^{\pi\sinh\zeta})},
\]
it holds that
\begin{align*}
 \int_0^{x+\ii y}
\frac{\pi\cosh\zeta}{(1+\ee^{-\pi\sinh\zeta})\log(1+\ee^{\pi\sinh\zeta})}
\dd{\zeta}
&=\left[\log(\log(1+\ee^{\pi\sinh\zeta}))
\right]_{\zeta=0}^{\zeta=x+\ii y}\\
&=\log(\log(1+\ee^{\pi\sinh(x+\ii y)})) - \log(\log 2).
\end{align*}
Using this equality and Lemma~\ref{lem:bound-DE-1p-log},
we have
\begin{align*}
& |\log(\log(1+\ee^{\pi\sinh(x+\ii y)}))|\\
&=
\left|
\log(\log 2)+
\int_0^{x+\ii y}
\frac{\pi\cosh\zeta}{(1+\ee^{-\pi\sinh\zeta})\log(1+\ee^{\pi\sinh\zeta})}
\dd{\zeta}
\right|\\
&\leq
\left|
\log(\log 2)
\right|
+\left|
\int_0^{x}
\frac{\pi\cosh\zeta}{(1+\ee^{-\pi\sinh\zeta})\log(1+\ee^{\pi\sinh\zeta})}
\dd{\zeta}\right.\\
&\quad\quad\quad\quad\quad\quad+\left.
\int_x^{x+\ii y}
\frac{\pi\cosh\zeta}{(1+\ee^{-\pi\sinh\zeta})\log(1+\ee^{\pi\sinh\zeta})}
\dd{\zeta}\right|\\
&\leq
-\log(\log 2)
+\frac{\pi(1+c_d)}{\log(2+c_d)}\int_0^{x}|\cosh\zeta||\dd\zeta|
+\frac{\pi(1+c_d)}{\log(2+c_d)}\int_x^{x+\ii y}|\cosh\zeta||\dd\zeta|\\
&= -\log(\log 2)
+\frac{\pi(1+c_d)}{\log(2+c_d)}
\left(\int_0^{x}\cosh t|\dd{t}|+\int_0^y|\cosh(x+\ii u)||\dd{u}|\right)\\
&\leq -\log(\log 2)
+\frac{\pi(1+c_d)}{\log(2+c_d)}
\left(|\sinh x|+|y|\cosh x\right)\\
&\leq -\log(\log 2)
+\frac{\pi(1+c_d)}{\log(2+c_d)}
\left(1+|y|\right)\cosh x,
\end{align*}
which is the first inequality.
The second inequality is obtained by replacing
$x$ with $\pi\sinh x$ in the second inequality
of Lemma~\ref{lem:bound-SE3-log}.
\end{proof}

Using these bounds,
we bound the discretization error as follows.

\begin{lemma}
\label{lem:DE3-discretization}
Let $K$, $\alpha$, $\beta$ and $d$ be positive constants with
$\alpha\leq 1$ and $d<\pi/2$.
Assume that $f$ is analytic on $\phi_3(\domD_d)$, and
satisfies~\eqref{eq:f-bound-log-algebraic-3} for all $z\in\phi_3(\domD_d)$.
Let $\mu=\min\{\alpha,\beta\}$. Then, putting
$F(x)=f(\phi_3(x))\phi_3'(x)$, we have~\eqref{eq:discretization-error},
where
\[
\mathcal{N}(F,d)
\leq \frac{4KL_d^{1-\alpha}}{\mu\cos^{\alpha+\beta}((\pi/2)\sin d)\cos d}
\left\{\left(\pi + \frac{1}{\mu\cos d}\right)\frac{(1+c_d)(1+d)}{\log(2+c_d)}
-\log(\log 2)
\right\},
\]
where $c_d=1/\cos((\pi/2)\sin d)$
and $L_d$ is a constant defined by~\eqref{eq:L-d}.
\end{lemma}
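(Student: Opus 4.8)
The plan is to follow the same three-step scheme used in Lemma~\ref{lem:SE3-discretization}, replacing the SE transformation $\psi_3$ throughout by the DE transformation $\phi_3(x)=\log(1+\ee^{\pi\sinh x})$ and invoking the DE analogues of the auxiliary estimates. As in every previous discretization lemma, it suffices to show $F\in\mathbf{B}(\domD_d)$ and read off $\mathcal{N}(F,d)$, after which~\eqref{eq:discretization-error} follows from the classical bound. Analyticity is immediate: $f(\phi_3(\cdot))$ is analytic on $\domD_d$ by hypothesis, and $\phi_3'(\zeta)=\pi\cosh\zeta/(1+\ee^{-\pi\sinh\zeta})$ is analytic on $\domD_{\pi/2}$, so $F$ is analytic on $\domD_d$ because $d<\pi/2$.

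The core of the argument is a pointwise bound on $|F(\zeta)|$ for $\zeta=x+\ii y\in\domD_d$, the DE counterpart of~\eqref{eq:bound-F-SE3}. Writing $z=\phi_3(\zeta)$, condition~\eqref{eq:f-bound-log-algebraic-3} expresses $|f(z)|$ through $|z/(1+z)|^{\alpha-1}$, $|\ee^{-z}|^{\beta}=|1+\ee^{\pi\sinh\zeta}|^{-\beta}$ and $|\log z|=|\log(\log(1+\ee^{\pi\sinh\zeta}))|$, while $|\phi_3'(\zeta)|$ contributes $\pi|\cosh\zeta|/|1+\ee^{-\pi\sinh\zeta}|$. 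Since $\alpha\le 1$, I would rewrite $|z/(1+z)|^{\alpha-1}$ as the $(1-\alpha)$-th power of its reciprocal and apply Lemma~\ref{lem:bound-DE3-1p-log} to bound $\{(1+\log(1+\ee^{\pi\sinh\zeta}))/\log(1+\ee^{\pi\sinh\zeta})\}^{1-\alpha}$ by $L_d^{1-\alpha}|1+\ee^{-\pi\sinh\zeta}|^{1-\alpha}$; the extra power of $|1+\ee^{-\pi\sinh\zeta}|$ then cancels the reciprocal factor from $\phi_3'$, leaving $KL_d^{1-\alpha}\pi|\cosh\zeta|\,|1+\ee^{-\pi\sinh\zeta}|^{-\alpha}|1+\ee^{\pi\sinh\zeta}|^{-\beta}|\log(\log(1+\ee^{\pi\sinh\zeta}))|$. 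Applying Lemma~\ref{lem:bound-DE-1p-exp} to the two algebraic factors, the elementary bound $|\cosh\zeta|\le\cosh x$, and Lemma~\ref{lem:bound-DE3-log} to the iterated logarithm, and finally replacing $\alpha,\beta$ by $\mu$ in the surviving exponential denominators, yields a bound equal to $KL_d^{1-\alpha}\pi\cosh x\,\{(1+\ee^{-\pi\sinh(x)\cos y})(1+\ee^{\pi\sinh(x)\cos y})\}^{-\mu}\cos^{-(\alpha+\beta)}((\pi/2)\sin y)$ multiplied by $\{\pi(1+c_d)(1+|y|)\cosh x/\log(2+c_d)-\log(\log 2)\}$.

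With this bound in hand, condition~\eqref{eq:F-imag-int-0} follows exactly as before: after integrating in $y$ over $[-d,d]$, the factor $(1+\ee^{\pm\pi\sinh(x)\cos y})^{-\mu}$ decays doubly exponentially while the numerator grows only like $\cosh^2 x$, forcing the whole expression to zero as $x\to\pm\infty$. To estimate $\mathcal{N}(F,d)$, I would add the $+y$ and $-y$ contributions, bound each $\cos$ factor by its value at $y=d$ (valid since $d<\pi/2$), pass to the limit $y\to d-0$, and reduce everything to $\frac{2KL_d^{1-\alpha}}{\cos^{\alpha+\beta}((\pi/2)\sin d)}$ times the integral $\int_{-\infty}^{\infty}\pi\cosh x\,\{(1+\ee^{-\pi\sinh(x)\cos d})(1+\ee^{\pi\sinh(x)\cos d})\}^{-\mu}\{\pi(1+c_d)(1+d)\cosh x/\log(2+c_d)-\log(\log 2)\}\,\dd x$. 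By evenness this equals $2\int_0^{\infty}$ of the same quantity with $\{1+\ee^{-\pi\sinh(x)\cos d}\}^{-2\mu}\ee^{-\pi\mu\sinh(x)\cos d}$, and bounding $(1+\ee^{-\pi\sinh(x)\cos d})^{2\mu}\ge 1$ leaves the two elementary integrals $\int_0^{\infty}\pi\cosh x\,\ee^{-\pi\mu\sinh(x)\cos d}\dd x=1/(\mu\cos d)$ and $\int_0^{\infty}\pi\cosh^2 x\,\ee^{-\pi\mu\sinh(x)\cos d}\dd x$; the latter I evaluate by the same integration by parts as in Lemma~\ref{lem:DE2-discretization}, writing $\pi\cosh x\,\ee^{-\pi\mu\sinh(x)\cos d}$ as a derivative and bounding the resulting $\sinh x$ by $\cosh x$, which gives at most $1/(\mu\cos d)+1/(\pi\mu^2\cos^2 d)$. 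Assembling the pieces and factoring out $2/(\mu\cos d)$ reproduces the stated bound on $\mathcal{N}(F,d)$.

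The main obstacle is essentially bookkeeping: ensuring that the factor $|1+\ee^{-\pi\sinh\zeta}|^{1-\alpha}$ produced by Lemma~\ref{lem:bound-DE3-1p-log} cancels exactly against the reciprocal factor from $\phi_3'$, and carrying the $(1+|y|)\cosh x$ growth supplied by Lemma~\ref{lem:bound-DE3-log} through the $\cosh$-weighted integration by parts without loss. No genuinely new analytic difficulty arises beyond what already appears in Lemmas~\ref{lem:SE3-discretization} and~\ref{lem:DE2-discretization}.
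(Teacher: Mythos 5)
Your proposal is correct and follows essentially the same route as the paper: the same pointwise bound on $|F(\zeta)|$ obtained by combining Lemmas~\ref{lem:bound-DE3-1p-log}, \ref{lem:bound-DE-1p-exp} and~\ref{lem:bound-DE3-log} with the cancellation of $|1+\ee^{-\pi\sinh\zeta}|^{1-\alpha}$ against the factor from $\phi_3'$, the same reduction of $\mathcal{N}(F,d)$ to an integral over $[0,\infty)$, and the same integration by parts (you merely split off the $-\log(\log 2)$ term before integrating by parts, whereas the paper keeps the integrand together; the resulting constants agree).
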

\begin{proof}
It suffices to show that $F\in\mathbf{B}(\domD_d)$.
Because $f(\phi_3(\cdot))$ is analytic on $\domD_d$ and
$\phi_3'$ is analytic on $\domD_{\pi/2}$,
$F$ is analytic on $\domD_d$.
Next, we show~\eqref{eq:F-imag-int-0}.
From~\eqref{eq:f-bound-log-algebraic-3}
and Lemmas~\ref{lem:bound-DE3-1p-log},~\ref{lem:bound-DE-1p-exp}
and~\ref{lem:bound-DE3-log}, it holds
for $\zeta=x+\ii y\in \domD_d$ that
\begin{align}
& |F(\zeta)|\nonumber\\
&\leq
K\left|\frac{1+\log(1+\ee^{\pi\sinh\zeta})}{\log(1+\ee^{\pi\sinh\zeta})}\right|^{1-\alpha}
\frac{1}{|1+\ee^{\pi\sinh\zeta}|^{\beta}}
\left|\log(\log(1+\ee^{\pi\sinh\zeta}))\right|
\frac{\pi|\cosh\zeta|}{|1+\ee^{-\pi\sinh\zeta}|}
\nonumber\\
&\leq K\left\{L_d|1+\ee^{-\pi\sinh\zeta}|\right\}^{1-\alpha}
\frac{1}{|1+\ee^{\pi\sinh\zeta}|^{\beta}}
\left|\log(\log(1+\ee^{\pi\sinh\zeta}))\right|
\frac{\pi|\cosh\zeta|}{|1+\ee^{-\pi\sinh\zeta}|}\nonumber\\
&=\frac{KL_d^{1-\alpha}\left|\log(\log(1+\ee^{\pi\sinh\zeta}))\right|\pi|\cosh\zeta|}{|1+\ee^{-\pi\sinh\zeta}|^{\alpha}|1+\ee^{\pi\sinh\zeta}|^{\beta}}
\nonumber\\
&\leq
\frac{KL_d^{1-\alpha}\left\{\pi\lambda_d(1+|y|)\cosh x
 - \log(\log 2)\right\}\pi\cosh x}{(1+\ee^{-\pi\sinh(x)\cos y})^{\alpha}(1+\ee^{\pi\sinh(x)\cos y})^{\beta}\cos^{\alpha+\beta}((\pi/2)\sin y)}\nonumber\\
&\leq
\frac{KL_d^{1-\alpha}\left\{\pi\lambda_d(1+|y|)\cosh x
 - \log(\log 2)\right\}\pi\cosh x}{(1+\ee^{-\pi\sinh(x)\cos y})^{\mu}(1+\ee^{\pi\sinh(x)\cos y})^{\mu}\cos^{\alpha+\beta}((\pi/2)\sin y)},
\label{eq:bound-F-DE3}
\end{align}
where we put $\lambda_d=(1+c_d)/\log(2+c_d)$.
Using this inequality~\eqref{eq:bound-F-DE3}, we have
\begin{align*}
&\int_{-d}^d\left|F(x + \ii y)\right|\dd y\\
&\leq
\int_{-d}^d
\frac{KL_d^{1-\alpha}\left\{\pi\lambda_d(1+|y|)\cosh x
 - \log(\log 2)\right\}\pi\cosh x}{(1+\ee^{-\pi\sinh(x)\cos y})^{\mu}(1+\ee^{\pi\sinh(x)\cos y})^{\mu}\cos^{\alpha+\beta}((\pi/2)\sin y)}\dd{y}\\
&\leq
\frac{KL_d^{1-\alpha}\left\{\pi\lambda_d(1+ d)\cosh x
 - \log(\log 2)\right\}\pi\cosh x}{(1+\ee^{-\pi\sinh(x)\cos d})^{\mu}(1+\ee^{\pi\sinh(x)\cos d})^{\mu}\cos^{\alpha+\beta}((\pi/2)\sin d)}
\cdot
\int_{-d}^d \dd{y}\\
&\to 0 \cdot 2d
\quad (x \to \pm\infty),
\end{align*}
which shows~\eqref{eq:F-imag-int-0}.
Finally, we estimate $\mathcal{N}(F,d)$.
Using~\eqref{eq:bound-F-DE3},
we have
\begin{align*}
&\int_{-\infty}^{\infty} \left\{|F(x + \ii y)|+|F(x - \ii y)|\right\}\dd x\\
&\leq\frac{KL_d^{1-\alpha}}{\cos^{\alpha+\beta}((\pi/2)\sin y)}
\int_{-\infty}^{\infty}
\frac{\left\{\pi\lambda_d(1+|y|)\cosh x - \log(\log 2)\right\}\pi\cosh x}{(1+\ee^{-\pi\sinh(x)\cos y})^{\mu}(1+\ee^{\pi\sinh(x)\cos y})^{\mu}}
\dd{x}\\
&\quad+\frac{KL_d^{1-\alpha}}{\cos^{\alpha+\beta}((\pi/2)\sin(-y))}
\int_{-\infty}^{\infty}
\frac{\left\{\pi\lambda_d(1+|-y|)\cosh x - \log(\log 2)\right\}\pi\cosh x}{(1+\ee^{-\pi\sinh(x)\cos(-y)})^{\mu}(1+\ee^{\pi\sinh(x)\cos(-y)})^{\mu}}
\dd{x}\\
&\leq
\frac{2KL_d^{1-\alpha}}{\cos^{\alpha+\beta}((\pi/2)\sin d)}
\int_{-\infty}^{\infty}
\frac{\left\{\pi\lambda_d(1+d)\cosh x - \log(\log 2)\right\}\pi\cosh x}{(1+\ee^{-\pi\sinh(x)\cos d})^{\mu}(1+\ee^{\pi\sinh(x)\cos d})^{\mu}}
\dd{x},
\end{align*}
which holds for $y\in [-d,d]$ (note that $d<\pi/2$).
Therefore, the inequality remains valid
when taking the limit $y\to d - 0$.
We bound the integral as
\begin{align*}
&\int_{-\infty}^{\infty}
\frac{\left\{\pi\lambda_d(1+d)\cosh x - \log(\log 2)\right\}\pi\cosh x}{(1+\ee^{-\pi\sinh(x)\cos d})^{\mu}(1+\ee^{\pi\sinh(x)\cos d})^{\mu}}
\dd{x}\\
&=2 \int_0^{\infty}
\frac{\left\{\pi\lambda_d(1+d)\cosh x - \log(\log 2)\right\}\pi\cosh x}{(1+\ee^{-\pi\sinh(x)\cos d})^{2\mu}}
\ee^{-\pi\mu\sinh(x)\cos d}
\dd{x}\\
&\leq 2 \int_0^{\infty}
\frac{\left\{\pi\lambda_d(1+d)\cosh x - \log(\log 2)\right\}\pi\cosh x}{(1+0)^{2\mu}}
\ee^{-\pi\mu\sinh(x)\cos d}\dd x.
\end{align*}
Furthermore, using integration by parts, we have
\begin{align*}
&2 \int_0^{\infty}
\left\{\pi\lambda_d(1+d)\cosh x - \log(\log 2)\right\}\pi\cosh x
\ee^{-\pi\mu\sinh(x)\cos d}\dd x\\
&=2 \int_0^{\infty}
\left\{\pi\lambda_d(1+d)\cosh x - \log(\log 2)\right\}
\left(\frac{\ee^{-\pi\mu\sinh(x)\cos d}}{-\mu\cos d}\right)'\dd x\\
&=2\left[
\left\{\pi\lambda_d(1+d)\cosh x - \log(\log 2)\right\}
\cdot\left(\frac{\ee^{-\pi\mu\sinh(x)\cos d}}{-\mu\cos d}\right)
\right]_{x=0}^{x=\infty}\\
&\quad+\frac{2\lambda_d(1+d)}{\mu\cos d}\int_0^{\infty}
\pi\sinh(x)\ee^{-\pi\mu\sinh(x)\cos d}\dd{x}\\
&\leq\frac{2\left\{\pi\lambda_d (1+d) - \log(\log 2)\right\}}{\mu\cos d}
+\frac{2\lambda_d(1+d)}{\mu\cos d}\int_0^{\infty}
\pi\cosh(x)\ee^{-\pi\mu\sinh(x)\cos d}\dd{x}\\
&=\frac{2\left\{\pi\lambda_d (1+d) - \log(\log 2)\right\}}{\mu\cos d}
+\frac{2\lambda_d(1+d)}{\mu^2\cos^2 d}.
\end{align*}
Thus, we obtain the conclusion.
\end{proof}

Next, we bound the truncation error as follows.

\begin{lemma}
\label{lem:DE3-truncation}
Let $K$, $\alpha$ and $\beta$ be positive constants with $\alpha\leq 1$.
Assume that $f$ satisfies~\eqref{eq:f-bound-log-algebraic-3}
for all $z\in (0, \infty)$.
Let $\mu=\min\{\alpha,\beta\}$,
let $n$ be positive integer,
and
let $M$ and $N$ be selected by~\eqref{eq:def-MN-DE}.
Let $Mh\geq \arsinh(2/(\pi\alpha))$ and $Nh\geq \arsinh(2/(\pi\beta))$
be satisfied.
Then, putting $F(x)=f(\phi_3(x))\phi_3'(x)$, we have
\begin{align*}
&\left|
h\sum_{k=-\infty}^{-M-1}F(kh)
+h\sum_{k=N+1}^{\infty}F(kh)
\right|\\
&\leq \frac{K\ee^{\pi(1-\alpha)/12}}{\mu}
\left(\pi\sinh(Mh) + \pi\sinh(Nh) - 2\log(\log 2) + \frac{2}{\mu}\right)
\ee^{-\pi\mu q(2 d n/\mu)},
\end{align*}
where $q(x)=x/\arsinh x$.
\end{lemma}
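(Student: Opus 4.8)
The plan is to follow the same two-step template used in the previous truncation lemmas, since the present one is essentially a hybrid of Lemma~\ref{lem:SE3-truncation} (the exponential-decay transformation, which supplies the $\log\log$ bound and the factor $\ee^{\pi(1-\alpha)/12}$) and Lemma~\ref{lem:DE1-truncation} (the DE transformation, which supplies the $\sinh$-type monotonicity via Proposition~\ref{prop:DE1-monotone} and the $M,N$ selection). First I would establish a clean pointwise bound on $|F(x)|$ for real $x$, and then I would dominate each of the two tail sums by an improper integral.

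For the pointwise bound, I start from~\eqref{eq:f-bound-log-algebraic-3} evaluated at $z=\phi_3(x)=\log(1+\ee^{\pi\sinh x})$, together with $\phi_3'(x)=\pi\cosh x/(1+\ee^{-\pi\sinh x})$. Using $\alpha\leq 1$, I raise the real-line estimate of Lemma~\ref{lem:bound-DE3-1p-log}, namely $(1+\log(1+\ee^{\pi\sinh x}))/\log(1+\ee^{\pi\sinh x})\leq \ee^{\pi/12}(1+\ee^{-\pi\sinh x})$, to the power $1-\alpha$, and combine the resulting factor $(1+\ee^{-\pi\sinh x})^{1-\alpha}$ with the $1/(1+\ee^{-\pi\sinh x})$ coming from $\phi_3'$ to produce $1/(1+\ee^{-\pi\sinh x})^{\alpha}$. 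Applying the second inequality of Lemma~\ref{lem:bound-DE3-log} to replace $|\log(\log(1+\ee^{\pi\sinh x}))|$ by $|\pi\sinh x|-\log(\log 2)$, I obtain
\[
 |F(x)|\leq
 \frac{K\ee^{\pi(1-\alpha)/12}\{|\pi\sinh x|-\log(\log 2)\}\pi\cosh x}{(1+\ee^{-\pi\sinh x})^{\alpha}(1+\ee^{\pi\sinh x})^{\beta}},
\]
which is the exact analog of the pointwise bound in the proof of Lemma~\ref{lem:SE3-truncation}, with $x$ replaced by $\pi\sinh x$ and an extra Jacobian factor $\pi\cosh x$.

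Next I split the denominator in the two tails: on $k\leq -M-1$ I factor out $\ee^{\pi\alpha\sinh x}$ and bound $(1+\ee^{\pi\sinh x})^{\alpha+\beta}\geq 1$, and on $k\geq N+1$ I factor out $\ee^{-\pi\beta\sinh x}$ and bound $(1+\ee^{-\pi\sinh x})^{\alpha+\beta}\geq 1$. Up to the constant $K\ee^{\pi(1-\alpha)/12}$, the two resulting summands are linear combinations of $-\sinh(x)\cosh(x)\ee^{\pi\alpha\sinh x}$ and $\cosh(x)\ee^{\pi\alpha\sinh x}$ (left tail), and of $\sinh(x)\cosh(x)\ee^{-\pi\beta\sinh x}$ and $\cosh(x)\ee^{-\pi\beta\sinh x}$ (right tail). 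Crucially the coefficient $-\log(\log 2)$ of the $\cosh$-only terms is positive (since $\log 2<1$), so Proposition~\ref{prop:DE1-monotone} applies to both constituents simultaneously: under $Mh\geq\arsinh(2/(\pi\alpha))$ the left summand is monotonically increasing and under $Nh\geq\arsinh(2/(\pi\beta))$ the right summand is monotonically decreasing, which lets me dominate each tail sum by the corresponding improper integral. Evaluating these integrals by the substitution $u=\pi\sinh x$ gives the closed forms $\frac{1}{\alpha}(\pi\sinh(Mh)-\log(\log 2)+\frac1\alpha)\ee^{-\pi\alpha\sinh(Mh)}$ and $\frac{1}{\beta}(\pi\sinh(Nh)-\log(\log 2)+\frac1\beta)\ee^{-\pi\beta\sinh(Nh)}$, after which $1/\alpha,1/\beta\leq 1/\mu$ collects the prefactors.

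Finally, exactly as in Lemma~\ref{lem:DE1-truncation}, the selection~\eqref{eq:def-MN-DE} forces $\alpha\sinh(Mh)\geq\mu q(2dn/\mu)$ and $\beta\sinh(Nh)\geq\mu q(2dn/\mu)$ (the ceiling only enlarges the $\arsinh$ argument and $\sinh$ is increasing on the positive axis), so both exponentials are at most $\ee^{-\pi\mu q(2dn/\mu)}$; adding the two tails then yields the stated bound. I expect the only genuinely delicate point to be the monotonicity step: one must verify that the \emph{full} summand, not merely its leading $\sinh\cosh$ part, is monotone on the relevant half-line, which is precisely why the sign of $-\log(\log 2)$ and the simultaneous treatment of $G_{\pm}$ and $\tilde{G}_{\pm}$ in Proposition~\ref{prop:DE1-monotone} are needed. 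Everything else is a routine adaptation of the earlier truncation proofs.
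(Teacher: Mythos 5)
Your proposal is correct and follows essentially the same route as the paper's proof: the identical pointwise bound on $|F(x)|$ obtained from Lemmas~\ref{lem:bound-DE3-1p-log} and~\ref{lem:bound-DE3-log} with the $(1+\ee^{-\pi\sinh x})^{1-\alpha}$ absorption, the same sum-to-integral comparison via Proposition~\ref{prop:DE1-monotone} (including the observation that $-\log(\log 2)>0$ is what lets both the $G_{\pm}$ and $\tilde{G}_{\pm}$ parts be treated together), and the same final reduction using $\alpha\sinh(Mh)\geq\mu q(2dn/\mu)$ and $\beta\sinh(Nh)\geq\mu q(2dn/\mu)$ from~\eqref{eq:def-MN-DE}. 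No gaps.
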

\begin{proof}
From~\eqref{eq:f-bound-log-algebraic-3}
and Lemmas~\ref{lem:bound-DE3-1p-log} and~\ref{lem:bound-DE3-log},
it holds that
\begin{align*}
 &|F(x)|\\
&\leq K
\left\{\frac{1+\log(1+\ee^{\pi\sinh x})}{\log(1+\ee^{\pi\sinh x})}\right\}^{1-\alpha}
\frac{1}{(1+\ee^{\pi\sinh x})^{\beta}}
|\log(\log(1+\ee^{\pi\sinh x}))|
\frac{\pi\cosh x}{1+\ee^{-\pi\sinh x}}\\
&\leq K\left\{\ee^{\pi/12}(1+\ee^{-\pi\sinh x})\right\}^{1-\alpha}
\frac{1}{(1+\ee^{\pi\sinh x})^{\beta}}
\left\{\pi|\sinh x| - \log(\log 2)\right\}
\frac{\pi\cosh x}{1+\ee^{-\pi\sinh x}}\\
&=\frac{K\ee^{\pi(1-\alpha)/12}\left\{\pi|\sinh x|-\log(\log 2)\right\}\pi\cosh x}
{(1+\ee^{-\pi\sinh x})^{\alpha}(1+\ee^{\pi\sinh x})^{\beta}}.
\end{align*}
Using this inequality, we have
\begin{align*}
&\left|
h\sum_{k=-\infty}^{-M-1}F(kh)
+h\sum_{k=N+1}^{\infty}F(kh)
\right|\\
&\leq h\sum_{k=-\infty}^{-M-1}
\frac{K\ee^{\pi(1-\alpha)/12}\left\{\pi|\sinh(kh)|-\log(\log 2)\right\}\pi\cosh(kh)}
{(1+\ee^{-\pi\sinh(kh)})^{\alpha}(1+\ee^{\pi\sinh(kh)})^{\beta}}\\
&\quad +h\sum_{k=N+1}^{\infty}
\frac{K\ee^{\pi(1-\alpha)/12}\left\{\pi|\sinh(kh)|-\log(\log 2)\right\}\pi\cosh(kh)}
{(1+\ee^{-\pi\sinh(kh)})^{\alpha}(1+\ee^{\pi\sinh(kh)})^{\beta}}\\
&=h\sum_{k=-\infty}^{-M-1}
\frac{K\ee^{\pi(1-\alpha)/12}\left\{-\pi\sinh(kh)-\log(\log 2)\right\}\pi\cosh(kh)}
{(1+\ee^{\pi\sinh(kh)})^{\alpha+\beta}}\ee^{\pi\alpha\sinh(kh)}\\
&\quad +h\sum_{k=N+1}^{\infty}
\frac{K\ee^{\pi(1-\alpha)/12}\left\{\pi\sinh(kh)-\log(\log 2)\right\}\pi\cosh(kh)}
{(1+\ee^{-\pi\sinh(kh)})^{\alpha+\beta}}\ee^{-\pi\beta\sinh(kh)}\\
&\leq h\sum_{k=-\infty}^{-M-1}
\frac{K\ee^{\pi(1-\alpha)/12}\left\{-\pi\sinh(kh)-\log(\log 2)\right\}\pi\cosh(kh)}
{(1+0)^{\alpha+\beta}}\ee^{\pi\alpha\sinh(kh)}\\
&\quad +h\sum_{k=N+1}^{\infty}
\frac{K\ee^{\pi(1-\alpha)/12}\left\{\pi\sinh(kh)-\log(\log 2)\right\}\pi\cosh(kh)}
{(1+0)^{\alpha+\beta}}\ee^{-\pi\beta\sinh(kh)}.
\end{align*}
Using Proposition~\ref{prop:DE1-monotone},
for $Mh\geq\arsinh(2/(\pi\alpha))$ and $Nh\geq\arsinh(2/(\pi\beta))$,
we further bound the sums as
\begin{align*}
&h\sum_{k=-\infty}^{-M-1}
K\ee^{\pi(1-\alpha)/12}\left\{-\pi\sinh(kh)-\log(\log 2)\right\}\pi\cosh(kh)
\ee^{\pi\alpha\sinh(kh)}\\
&\quad +h\sum_{k=N+1}^{\infty}
K\ee^{\pi(1-\alpha)/12}\left\{\pi\sinh(kh)-\log(\log 2)\right\}\pi\cosh(kh)
\ee^{-\pi\beta\sinh(kh)}\\
&\leq K\ee^{\pi(1-\alpha)/12}\int_{-\infty}^{-Mh}
\left\{-\pi\sinh x-\log(\log 2)\right\}\pi\cosh x
\ee^{\pi\alpha\sinh x}\dd{x}\\
&\quad +K\ee^{\pi(1-\alpha)/12}\int_{Nh}^{\infty}
\left\{\pi\sinh x-\log(\log 2)\right\}\pi\cosh x
\ee^{-\pi\beta\sinh x}\dd{x}\\
&=\frac{K\ee^{\pi(1-\alpha)/12}}{\alpha}
\left(\pi\sinh(Mh)-\log(\log 2) + \frac{1}{\alpha}\right)
\ee^{-\pi\alpha\sinh(Mh)}\\
&\quad +\frac{K\ee^{\pi(1-\alpha)/12}}{\beta}
\left(\pi\sinh(Nh)-\log(\log 2) + \frac{1}{\beta}\right)
\ee^{-\pi\beta\sinh(Nh)}\\
&\leq\frac{K\ee^{\pi(1-\alpha)/12}}{\mu}
\left(\pi\sinh(Mh)-\log(\log 2) + \frac{1}{\mu}\right)
\ee^{-\pi\alpha\sinh(Mh)}\\
&\quad +\frac{K\ee^{\pi(1-\alpha)/12}}{\mu}
\left(\pi\sinh(Nh)-\log(\log 2) + \frac{1}{\mu}\right)
\ee^{-\pi\beta\sinh(Nh)},
\end{align*}
where $\mu=\min\{\alpha,\beta\}$ is used
at the last inequality.
Finally, using~\eqref{eq:def-MN-DE},
we have
$\ee^{-\pi\alpha\sinh(M h)}\leq \ee^{-\pi\mu q(2 d n/\mu)}$
(because $\alpha\sinh(Mh)\geq \mu q(2 d n/\mu)$)
and $\ee^{-\pi\beta\sinh(N h)}\leq \ee^{-\pi\mu q(2 d n/\mu)}$
(because $\beta\sinh(Nh)\geq \mu q(2 d n/\mu)$),
from which we obtain the conclusion.
\end{proof}

We are now in a position to prove Theorem~\ref{thm:new-DE3}.
Note that if $h\leq \pi d$,
then with $h$ selected by~\eqref{eq:improve-h-DE},
the inequalities~\eqref{eq:Mh-bounded-by-arsinh}
and~\eqref{eq:Nh-bounded-by-arsinh} hold.
Furthermore, if $n\geq \mu\sinh(1)/(2d)$,
i.e., if $\arsinh(2dn/\mu)\geq 1$,
then with $h$ selected by~\eqref{eq:improve-h-DE},
the inequality~\eqref{eq:M-leq-n} holds.
Similarly, it holds that
\begin{align*}
N &= \left\lceil
\frac{1}{h}\arsinh\left(\frac{2 d n/\beta}{\arsinh(2 d n/\mu)}\right)
\right\rceil\\
&=\left\lceil
\frac{n}{\arsinh(2 d n/\mu)}\arsinh\left(\frac{2 d n/\beta}{\arsinh(2 d n/\mu)}\right)
\right\rceil\\
&\leq\left\lceil
\frac{n}{\arsinh(2 d n/\mu)}\arsinh\left(\frac{2 d n/\beta}{1}\right)
\right\rceil\\
&\leq\left\lceil
\frac{n}{\arsinh(2 d n/\mu)}\arsinh\left(2 d n/\mu\right)
\right\rceil\\
&=n.
\end{align*}
From the inequalities, the truncation error
in Lemma~\ref{lem:DE3-truncation} is further bounded as
\begin{align*}
&\frac{K\ee^{\pi(1-\alpha)/12}}{\mu}
\left(\pi\sinh(Mh) + \pi\sinh(Nh) - 2\log(\log 2) + \frac{2}{\mu}\right)
\ee^{-\pi\mu q(2 d n/\mu)}\\
&\leq\frac{K\ee^{\pi(1-\alpha)/12}}{\mu}
\left(\pi\sinh(nh) + \pi\sinh(nh) - 2\log(\log 2) + \frac{2}{\mu}\right)
\ee^{-\pi\mu q(2 d n/\mu)}\\
&=\frac{2K\ee^{\pi(1-\alpha)/12}}{\mu}
\left(\pi\sinh(nh) - \log(\log 2) + \frac{1}{\mu}\right)
\ee^{-\pi\mu q(2 d n/\mu)}.
\end{align*}
Therefore,
from Lemmas~\ref{lem:DE3-discretization}
and~\ref{lem:DE3-truncation},
substituting~\eqref{eq:improve-h-DE} into $h$,
we have
\begin{align*}
\left|
\int_{-\infty}^{\infty}F(x)\dd{x}
- h \sum_{k=-M}^N F(kh)
\right| \leq C(n) n\ee^{-\pi\mu q(2 d n/\mu)},
\end{align*}
where
\begin{align*}
C(n) &=\frac{2K}{\mu^2}
\Biggl[
\frac{2 L_d^{1-\alpha}\left\{
(1+c_d)(1+d)(1+\pi\mu\cos d) - \mu\log(\log 2)\log(2+c_d)\cos d
\right\}}{n(1 - \ee^{-\pi\mu q(2dn/\mu)})\log(2+c_d)\cos^{\alpha+\beta}((\pi/2)\sin d)\cos^2 d}
\Biggr.\\
& \quad\quad\quad\quad
\Biggl.
+ \ee^{\pi(1-\alpha)/12}\left\{2\pi d + 1 - \frac{\mu\log(\log 2)}{n}\right\}
\Biggr].
\end{align*}
Furthermore, $C(n)\leq C(1)$ holds, which completes the proof of
Theorem~\ref{thm:new-DE3}.

\backmatter

%
%
%

\bmhead{Acknowledgments}
This work was partially supported by the
JSPS Grant-in-Aid for Scientific Research (C)
Number JP23K03218.

\bmhead{Data availability}
The data that support the findings of this study are available
from the corresponding author upon reasonable request.

\section*{Statements and Declarations}

\begin{itemize}
 \item \textbf{Competing Interests:}
 The authors have no competing interests to declare
 that are relevant to the content of this article.
\end{itemize}


\bibliography{sn-bibliography}


\end{document}